\newtheorem{theorem}{Theorem}[]
\newtheorem{Definition}[theorem]{Definition}
\newtheorem{lemma}[theorem]{Lemma}
\newtheorem{Proposition}[theorem]{Proposition}
\newtheorem{remark}[theorem]{Remark}
\numberwithin{equation}{section}
\setlist[enumerate]{label=\arabic*.}
\newcommand{\ie}{i.\,e.,\xspace}
\titleformat{\section}{\normalfont\centering\MakeUppercase}{\thesection}{0.2cm}{}
\titleformat{\subsection}{\normalfont\centering\MakeUppercase}{\thesubsection}{0.2cm}{}
\begin{document}
\thispagestyle{empty}
\title{Chemotaxis and Reactions in Anomalous Diffusion Dynamics} 
\author[]{De Andrade, Crystianne L., and Kiselev, Alexander A.}
\date{\today}

{\let\thefootnote\relax
\footnotemark 
\footnotetext{MSC2020: 35Q92, 92C17, 35K55, 35K57, 92B99} }

\begin{abstract}
Chemotaxis and reactions are fundamental processes in biology, often intricately intertwined. Chemotaxis, in particular, can be crucial in maintaining and accelerating a reaction. In this work, we extend the investigation initiated by \citet{Kiselev-Biomixing} by examining the impact of chemotactic attraction on reproduction and other processes in the context of anomalous diffusion of gamete densities. 
For that, we consider a partial differential equation, with a single density function, that includes advection, chemotaxis, absorbing reaction, and diffusion, incorporating the fractional Laplacian $\Lambda^\alpha$.  
The inclusion of the fractional Laplacian is motivated by experimental evidence supporting the efficacy of anomalous diffusion models, particularly in scenarios with sparse targets. The fractional Laplacian accommodates the nonlocal nature of superdiffusion processes, providing a more accurate representation than traditional diffusion models. Our proposed model represents a step forward in refining mathematical descriptions of cellular behaviors influenced by chemotactic cues.
\end{abstract} 
\maketitle
\bigskip


$\,$

$\,$

\section{Introduction} \label{section-Introduction}
Chemotaxis is the process through which cells convert chemical signals into motion behavior \citep{ KS-Review-2020, book-base}. 
The mathematical modeling of chemotaxis in cellular systems traces back to the works of  \citet{Keller1970Initiation} and \citet{Patlak-1953}, with the Keller-Segel model standing out as the most extensively studied one. 
This is a system of partial differential equations  describing the chemically induced movement of cells with density $\rho=\rho (x,t)$ towards increasing concentrations of a chemical substance with density $c=c(x,t)$, \ie  $\rho$ moves along the gradient of the chemical:
\begin{equation}
    \label{eq-1.1-the-Keller-Segel-model}
    \left\{\begin{aligned}
    &\partial_{t} \rho = \Delta \rho-\chi\nabla \cdot \left(\rho \nabla c\right)  & x \in \mathbb{R}^d,  & \quad t>0, \\
    &\tau \partial_{t} c =  \kappa \Delta c+\zeta \rho-\gamma c  \qquad & x \in \mathbb{R}^d,  & \quad t>0, \\
    & \rho(t=0)=\rho_{0}, \quad  c(t=0)=c_{0}   \qquad & x \in \mathbb{R}^d. & \; 
    \end{aligned}\right. 
\end{equation}
This system incorporates, in the right-hand side of the first equation, the diffusion of the cells, representing a random component of motion, and a chemotactic flux of advective type,  where $\chi$ stands for the chemotactic sensitivity. 
The second equation accounts for the diffusion, production, and consumption (or degradation) of the chemical signal, known as the chemoattractant, as it is emitted by the cells, diffused in the environment, and degraded at a rate proportional to the local concentration. 
As the biological background suggests, the model ensures non-negative solutions for cell density and chemical concentration under non-negative initial conditions. 
Moreover, as this model does not incorporate birth or death processes, describing exclusively cell motion, the total cell population, represented by the $L^1$ norm of $\rho$, is conserved over time: $\|\rho_0 \|_{L^1}=\|\rho(\cdot,t) \|_{L^1}$ 
\citep{KS-Review-2020, Bournaveas-2010-one-dimensional-fractional, book-base}.

The parabolic-elliptic Keller-Segel model arises from the assumption that the production and diffusion of the chemical occur much faster than other time scales in the problem $\left(\kappa \sim \zeta \gg 1\right)$. Additionally, with the observational time scale being much shorter than the speed at which the chemical signal $c$ degrades, $\gamma$ can be set equal to zero.   Under these conditions, the second equation of the Keller-Segel system, \eqref{eq-1.1-the-Keller-Segel-model}, simplifies to the elliptic form: $-\Delta c(x, t)=\rho(x, t)$. 

This simplification results in a single equation, since the gradient of the chemical concentration  can be expressed as function of $\rho$ \citep{Biler-On-parabolic-elliptic-limit-doubly-parabolic,Kiselev-Biomixing,kiselev2020chemotaxis}. That is,
\begin{equation*}
    \nabla c(x, t)=\nabla\left(-\Delta\right)^{-1} \rho(x,t)=-\frac{\Gamma\left((d+2) / 2\right)}{\pi^{d / 2} d}\int_{\mathbb{R}^d} \frac{x-y}{|x-y|^{d}} \rho(y,t) \mathrm{~d} y.
\end{equation*}
Thus, the classical parabolic-elliptic Keller-Segel model takes the following form 
\begin{equation}
    \label{eq-1.2-parabolic-elliptic-Keller-Segel-model}
    \left\{\begin{aligned}
    &\partial_t \rho=\Delta \rho-\chi \nabla \cdot\left(\rho \nabla\left(-\Delta\right)^{-1} \rho\right), & x \in \mathbb{R}^d,  & \quad t>0, \\
    &\rho(x, 0)=\rho_0(x)  \qquad & x \in \mathbb{R}^d. & \; 
    \end{aligned}\right.
\end{equation}

Several variations of the Keller-Segel model have been proposed to explore chemotaxis in conjunction with other phenomena, such as biological reactions. 
In that context, \citet{Kiselev-Biomixing} investigated the role of chemotaxis in enhancing biological reactions, focusing on coral broadcast spawning, a fertilization strategy adopted by various benthic invertebrates (sea urchins, anemones, corals). In this process, males and females release sperm and egg gametes into the surrounding flow, and the chemotaxis appears as the eggs release a chemical that attracts sperm. 

The mathematical model analyzed by \citet{Kiselev-Biomixing} is a modification of  \eqref{eq-1.2-parabolic-elliptic-Keller-Segel-model}, including advection and absorbing reaction, in which the approximation to one equation was based on the assumption that the chemical diffusion is much faster than the gamete densities diffusion.
Another simplification in this model is the assumption that sperm and egg gametes have identical densities, leading to a single density function $\rho \geq 0$. With these assumptions, the model is represented by the following equation:
%
\begin{equation}
    \label{eq-1.3-reaction-Keller-Segel-model}
    \left\{\begin{aligned}
    &\partial_t \rho+u \cdot \nabla \rho=\Delta \rho-\chi \nabla \cdot \left(\rho \nabla\left(-\Delta\right)^{-1} \rho\right)-\epsilon \rho^q,  \quad x \in \mathbb{R}^d,  \quad t>0, \\
    &\rho(x, 0)=\rho_0(x),  \qquad x \in \mathbb{R}^d, \quad d \geq 2,
    \end{aligned}\right.
\end{equation}
where  $u=u(x, t)$ is a given vector field modeling the ambient ocean flow ($u$ is divergence-free, regular, and prescribed, independent of $\rho$), 
and the term $\left(-\epsilon \rho^q\right)$ models the reaction (fertilization), with 
the parameter $\epsilon$ regulating the strength of the fertilization process.   
They pointed out that the value of $\epsilon$ is small due to the fact that an egg gets fertilized only if a sperm attaches to a certain limited area on its surface. 
The model does not track the production of reaction -- fertilized eggs. 

In this work, we extend the investigation initiated in \cite{Kiselev-Biomixing} by examining the impact of chemotactic attraction on reproduction processes in the context of anomalous diffusion of gamete densities.  
To this purpose, we analyze the behavior of the total fraction of unfertilized eggs over time $t$:
\begin{equation}
    \label{unfertilized-eggs}
    m(t)\equiv\int_{\mathbb{R}^d} \rho(x, t) \mathrm{~d} x,
\end{equation}  
where $\rho$ is the solution to the modified single partial differential equation 
\begin{equation}
    \label{eq-4.1-k}
    \left\{\begin{array}{l}
    \partial_t \rho+u \cdot \nabla \rho=-\Lambda^\alpha \rho-\chi \nabla \cdot\left(\rho \nabla\left(-\Delta\right)^{-1} \rho\right)-\epsilon \rho^q, \quad \alpha \in (1,2] \\
    \rho(x, 0)=\rho_0(x), \quad x \in \mathbb{R}^d \quad d \geq 2.
    \end{array}\right.
\end{equation}
In this equation, $\Lambda^{\alpha}=\left(-\Delta\right)^{\alpha / 2}$ is the fractional Laplacian, which can be defined in Fourier variables as 
\begin{equation}
    \label{eq-definition-fractional-Laplacian-fourier}
    \widehat{(\Lambda^{\alpha} f)}(\xi)=\widehat{\left[\left(-\Delta\right)^{\alpha / 2}f\right]}(\xi)=|\xi|^\alpha \widehat{f}(\xi), \; \; \forall \xi \in \mathbb R^d, \text{and } \; \alpha \in(0,2],
\end{equation}
for a sufficiently regular function $f$, with the Fourier transform convention   
\begin{equation}
    \label{eq-definition-Fourier-transform}
    \mathcal{F} f(\xi)=\frac{1}{(2\pi)^{d/2}}\int_{\mathbb{R}^d} f(x) \mathrm{e}^{-\mathrm{i} x \cdot \xi} \mathrm{~d} x,
\end{equation}
where $x \cdot \xi$ is the inner product in $\mathbb{R}^d$. 
For $\alpha \in (0, 2)$, an alternative definition is given by 
\begin{equation}
    \label{eq-definition-fractional-Laplacian-integral}
    \Lambda^{\alpha} f(x)=c_{d, \alpha} P . V . \int _{\mathbb{R}^d} \frac{f(x)-f(y)}{|x-y|^{d+\alpha}} \mathrm{~d} y,
\end{equation}
where $c_{d, \alpha}=\frac{2^{\alpha-1} \alpha \Gamma\left(\frac{d+\alpha}{2}\right)}{\pi^{d / 2} \Gamma\left(1-\frac{\alpha}{2}\right)}$ is a normalization constant  and $P.V.$ stands for the Cauchy principal value.

Note that, in standard diffusion, the use of the Laplacian ($\Lambda^{2}=-\Delta$) and the first-order time derivative ($\partial_t$) results in a Gaussian process for the particle motion. At the microscopic level, this leads to the mean squared displacement (MSD)\footnote{The MSD is a measure of the deviation of the position of a particle concerning a reference position over time.} of a particle being a linear function of time: $\left\langle x^2 \right\rangle \sim t$. 
In contrast, anomalous diffusion is characterized by an MSD that increases nonlinearly; it grows either slower (subdiffusion) or faster (superdiffusion) for large times than that in a Gaussian process. At the macroscopic level, such processes are adequately described by fractional differential equations, which involve fractional derivatives in time and/or space. 
Specifically, superdiffusion processes are modeled using equations with fractional derivative in space. 
In that case, the standard Laplace operator is replaced by the fractional one, which is nonlocal, since it is an integrodifferential operator \citep{Fractional-MEERSCHAERT2006181, Guo-2015-Partial-Differential-Equations, inverse-problems-anomalous-diffusion-processes-2015}. 

In the chemotaxis phenomena, many experiments have shown that anomalous diffusion models (superdiffusion type of behavior) can offer a superior fit to experimental data, especially when chemoattractants, food, or other targets are sparse or rare in the environment. 
At a microscopic level, superdiffusion processes can be described by Lévy flights or Lévy walk, where the length of particle jumps follows some heavy-tailed distribution, reflecting the long-range interactions among particles, which means that large jumps dominate the more common smaller ones. This non-negligible probability for long positional jumps in a biological context translates into cells/organisms persisting in a single direction of motion for a substantially longer time than in typical random walks \citep{Escudero-2006-Fractional-model, Estrada-Rodriguez-2018, Guo-2015-Partial-Differential-Equations}.

Nonetheless, while chemotaxis is well-established within the framework of classical diffusion as a mechanism that enhances and sustains reaction processes (e.g., \cite{Kiselev-Biomixing}), its influence on anomalous diffusion remains largely unexplored. 
It is worth highlighting that anomalous diffusion processes characterized by a fractional diffusion exponent $\alpha < 2$ introduce significant challenges, such as nonlocal effects and weaker dissipative properties compared to classical diffusion $\alpha = 2$, which makes the system more susceptible to blow-ups.  
This research aims to bridge the gap in understanding how nonlocality and fractional dynamics influence the behavior of chemotactic systems. By extending classical models to include anomalous diffusion, it provides a novel framework to study the interplay between chemotaxis and anomalous transport, with particular emphasis on analyzing the key quantity $m(t)$, through which we can access the fraction that reacts (or the amount of egg gametes that have been successfully fertilized), $\displaystyle m(0)-\lim _{t \rightarrow \infty} m(t)$. 

\begin{remark} \label{monotone-decreasing-fraction-unfertilized-eggs}
    The total fraction of unfertilized eggs by time $t$, $m(t)$, given by \eqref{unfertilized-eggs}, is a monotone decreasing function. Indeed, consider that $\rho$ vanishes when $|x| \rightarrow \infty$, and $\widehat{(\Lambda^{\alpha} \rho)}(0)$ is well defined. Then, as from \eqref{eq-definition-fractional-Laplacian-fourier} we have 
    \begin{equation*}
        \widehat{(\Lambda^{\alpha} \rho)}(0)=\int_{\mathbb{R}^d}\Lambda^{\alpha} \rho \mathrm{~d}x=0,
    \end{equation*}
    a direct application of the boundary conditions yields    
    \begin{equation*}
        \frac{\mathrm{d}}{\mathrm{d} t}\|\rho(\cdot, t)\|_{L^{1}}=-\epsilon \int_{\mathbb{R}^d} \rho^q \mathrm{~d} x.
    \end{equation*}
\end{remark} 

We want to see how fertilization can be efficient in both chemotactic and chemotaxis-free scenarios.  For that, we prove the following results. \vspace{0.2cm}
\begin{theorem} \label{Theorem-1.1-k}
    Let $\rho$ solve \eqref{eq-4.1-k} with a divergence-free $u \in C^{\infty}(\mathbb{R}^d \times[0, \infty))$ and initial data $\rho_0 \geq 0 \in \mathcal{S}$ (Schwartz space).
    Assume that $q d>d+\alpha$ and the chemotaxis is absent: $\chi=0$. Then, there exists a constant $C_0$ depending only on $\epsilon$, $q$, $d$, $\alpha$, and $\rho_0$ but not on $u$ such that $m(t) \geq C_0$ for all $t \geq 0$.
    Moreover, $C_0 (\epsilon,q, \alpha, d, \rho_0) \rightarrow  m(0) \equiv m_0$ as $\epsilon \rightarrow 0$ while $\rho_0$, $u$ and $q$ are fixed.
\end{theorem}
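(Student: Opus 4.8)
The plan is to reduce everything to a single time-integral estimate and then exploit the ultracontractivity of the linear advection--fractional-diffusion flow. By Remark~\ref{monotone-decreasing-fraction-unfertilized-eggs}, integrating in time,
\[
m(t)=m_0-\epsilon\int_0^t\!\!\int_{\mathbb{R}^d}\rho^q(x,s)\,dx\,ds ,
\]
so it suffices to show $\int_0^\infty\!\int_{\mathbb{R}^d}\rho^q\,dx\,ds\le K$ for a finite constant $K=K(q,d,\alpha,\rho_0)$ that does not depend on $u$ or $\epsilon$; then $m(t)\ge m_0-\epsilon K=:C_0$ for all $t$, and $C_0\to m_0$ as $\epsilon\to 0$ with $\rho_0,u,q$ fixed. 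I take as given (standard for the fractional drift--diffusion equation with smooth divergence-free $u$ and dissipative reaction, $\rho\ge 0$, $q>1$) that \eqref{eq-4.1-k} with $\chi=0$ has a global smooth nonnegative solution decaying rapidly in $x$, so that $m(t)=\|\rho(\cdot,t)\|_{L^1}\le m_0$ for all $t$.

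The heart of the argument is a $u$-uniform $L^\infty$ decay bound. Since $\rho\ge 0$, the reaction term $-\epsilon\rho^q$ is nonpositive, hence $\rho$ is a subsolution of the linear equation $\partial_t w+u\cdot\nabla w+\Lambda^\alpha w=0$ with data $\rho_0$; by the comparison principle $0\le\rho(\cdot,t)\le U(t,0)\rho_0$, where $U(t,s)$ denotes the propagator of that linear equation. As constants solve the linear equation, $\|\rho(\cdot,t)\|_{L^\infty}\le\|\rho_0\|_{L^\infty}$. For the decay, testing the linear equation against $w$ annihilates the drift term because $\nabla\cdot u=0$, leaving $\tfrac{d}{dt}\|w\|_{L^2}^2=-2\|\Lambda^{\alpha/2}w\|_{L^2}^2$; combined with the conservation of $\|w\|_{L^1}$ for nonnegative data and the fractional Nash inequality $\|f\|_{L^2}^{2+2\alpha/d}\le C_N\|\Lambda^{\alpha/2}f\|_{L^2}^2\,\|f\|_{L^1}^{2\alpha/d}$, an ODE comparison (followed by splitting a general datum into positive and negative parts) gives $\|U(t,s)\|_{L^1\to L^2}\le C(t-s)^{-d/(2\alpha)}$ with $C=C(d,\alpha)$. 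The adjoint propagator solves an equation of the same type with the still divergence-free drift $-u$, so the same bound holds for it, and by duality $\|U(t,s)\|_{L^2\to L^\infty}\le C(t-s)^{-d/(2\alpha)}$; composing $U(t,s)=U(t,\tfrac{s+t}{2})U(\tfrac{s+t}{2},s)$ yields $\|U(t,s)\|_{L^1\to L^\infty}\le C(d,\alpha)(t-s)^{-d/\alpha}$, uniformly in $u$. Hence
\[
\|\rho(\cdot,t)\|_{L^\infty}\le\min\!\bigl\{\,\|\rho_0\|_{L^\infty},\ C(d,\alpha)\,m_0\,t^{-d/\alpha}\,\bigr\},\qquad t>0.
\]

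Finally, by interpolation $\int_{\mathbb{R}^d}\rho^q\,dx\le\|\rho(\cdot,t)\|_{L^\infty}^{q-1}\|\rho(\cdot,t)\|_{L^1}\le m_0\,\|\rho(\cdot,t)\|_{L^\infty}^{q-1}$ (note $q>1+\alpha/d>1$, so this is legitimate). Integrating in time and splitting at the time $t_*=(C(d,\alpha)m_0/\|\rho_0\|_{L^\infty})^{\alpha/d}$ where the two bounds above coincide,
\[
\int_0^\infty\!\!\int_{\mathbb{R}^d}\rho^q\,dx\,dt\le m_0\,t_*\,\|\rho_0\|_{L^\infty}^{q-1}+m_0\,(C m_0)^{q-1}\!\int_{t_*}^\infty t^{-d(q-1)/\alpha}\,dt ,
\]
and the last integral converges precisely because the hypothesis $qd>d+\alpha$ is equivalent to $d(q-1)/\alpha>1$. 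This bounds $\int_0^\infty\!\int\rho^q$ by a finite $K=K(q,d,\alpha,\rho_0)$ independent of $\epsilon$ and $u$, completing the proof. I expect the main technical obstacle to be the uniform-in-$u$ ultracontractive estimate for $U(t,s)$: the comparison step and the conservation/interpolation facts are routine, but keeping the constant genuinely independent of the ambient flow is exactly where the Nash-inequality-plus-duality structure is essential (a more laborious alternative runs the same $L^p\to L^\infty$ iteration directly on the nonlinear equation, where the reaction only improves the estimates).
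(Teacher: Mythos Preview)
Your proof is correct and shares the paper's core analytic input: comparison with the linear advection--fractional-diffusion solution and the uniform-in-$u$ ultracontractive bound $\|U(t,s)\|_{L^1\to L^\infty}\le C(d,\alpha)(t-s)^{-d/\alpha}$ obtained via the fractional Nash inequality plus duality (this is exactly Lemma~\ref{lemma-2.1-k} in the paper). Where you diverge is in the endgame. You bound the full space--time integral $\int_0^\infty\!\int\rho^q$ directly by splitting at a single fixed time $t_*$, obtaining $C_0=m_0-\epsilon K$; this makes the limit $C_0\to m_0$ as $\epsilon\to 0$ immediate. The paper instead restarts the comparison at an arbitrary later time $t_0$ and invokes an additional monotonicity lemma (Lemma~\ref{Lemma-2.2-k}: the ratio $\|\rho(\cdot,t)\|_{L^p}/\|\rho(\cdot,t)\|_{L^1}$ is nonincreasing) to control $\|\rho(\cdot,t_0)\|_{L^\infty}$ in terms of $\|\rho(\cdot,t_0)\|_{L^1}$, and then optimizes over a free parameter $\tau$. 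Your route is shorter and fully proves the theorem as stated; what the paper's restart-and-optimize argument buys in addition is a \emph{strictly positive} lower bound $C_0=\min\bigl(\tfrac12 m_0,\,c(d,q,\alpha,\rho_0)\,\epsilon^{-1/(q-1)}\bigr)$ valid for \emph{every} $\epsilon>0$, whereas your $C_0=m_0-\epsilon K$ is positive only for $\epsilon$ below a threshold.
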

\vspace{0.1cm}

However, when chemotaxis is present, we have

\vspace{0.1cm}
\begin{theorem} \label{Theorem-1.2-k}
    Let $\rho$ solve \eqref{eq-4.1-k} with a divergence-free $u \in C^{\infty}(\mathbb{R}^d \times[0, \infty))$ and initial data $\rho_0 \geq 0 \in \mathcal{S}$.
    Assume that $d=2$ and $q$ is a positive integer greater than $2$. 
    Then we have 
    \begin{equation*}
         C_1(\epsilon,q, \alpha,\chi, \rho_0) \leq \lim_{t \rightarrow \infty} m(t) < C_2(\alpha,\chi, \rho_0, u)
    \end{equation*}
    with $C_1,C_2>0$. Moreover, $C_1(\epsilon,q, \alpha,\chi, \rho_0) \rightarrow 0$ and $C_2(\alpha,\chi, \rho_0, u)\rightarrow 0$ as $\chi \rightarrow \infty$, with $q$, $\rho_0$ and $u$ fixed.
\end{theorem}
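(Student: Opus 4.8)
The plan is to prove the two inequalities separately, both resting on the same preliminary input: global well‑posedness of \eqref{eq-4.1-k} in $d=2$ together with uniform‑in‑time $L^p$ and $L^\infty$ bounds and an algebraic decay rate for $\|\rho(\cdot,t)\|_{L^\infty}$. The mechanism that makes this work in the supercritical regime $\alpha<2$ is that the absorbing reaction with $q$ an integer $\ge 3$ dominates the chemotactic nonlinearity. I would start from
\[
\frac{d}{dt}\|\rho\|_{L^p}^p=-p\int\rho^{p-1}\Lambda^\alpha\rho\,dx+\chi(p-1)\int\rho^{p+1}\,dx-\epsilon p\int\rho^{p+q-1}\,dx,
\]
where the advection term has vanished because $u$ is divergence‑free; this is the reason the lower‑bound constant will not depend on $u$. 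Since $q>2$, Young's inequality lets $-\epsilon p\int\rho^{p+q-1}$ absorb $\chi(p-1)\int\rho^{p+1}$ up to a multiple of $\|\rho\|_{L^1}=m(t)\le m_0$, producing a Riccati‑type inequality $\frac{d}{dt}\|\rho\|_{L^p}^p\le-a(\|\rho\|_{L^p}^p)^{1+\sigma}+b$ with $\sigma>0$; hence $\sup_t\|\rho(\cdot,t)\|_{L^p}<\infty$, and a De Giorgi/Moser iteration upgrades this to a uniform $L^\infty$ bound. Combining the Córdoba--Córdoba inequality for the dissipative term with a Nash‑type inequality (or a Duhamel estimate against the $\alpha$‑stable kernel, where $\alpha>1$ makes the drift terms integrable in time) then yields $\|\rho(\cdot,t)\|_{L^\infty}\lesssim(1+t)^{-d/\alpha}$.

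For the lower bound I would use the preceding Remark: $m'(t)=-\epsilon\int\rho^q\,dx\ge-\epsilon\|\rho(\cdot,t)\|_{L^\infty}^{q-1}m(t)$, whence $m(t)\ge m_0\exp\!\big(-\epsilon\int_0^t\|\rho(\cdot,s)\|_{L^\infty}^{q-1}\,ds\big)$. The integral over $[0,1]$ is controlled by the uniform $L^\infty$ bound, and over $[1,\infty)$ by $\int_1^\infty s^{-d(q-1)/\alpha}\,ds<\infty$, since $d(q-1)/\alpha=2(q-1)/\alpha\ge 4/\alpha\ge 2>1$ for $q\ge 3$, $\alpha\le 2$. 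This gives $\lim_{t\to\infty}m(t)\ge C_1=m_0 e^{-\epsilon K}>0$ with $K$ finite and depending only on $\epsilon,q,\alpha,\chi,\rho_0$.

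The upper bound is the heart of the matter; I would obtain it by a virial argument adapting \cite{Kiselev-Biomixing}. Formally, one tracks $N(t)=\int|x|^2\rho\,dx$: in $d=2$ the chemotactic contribution collapses to exactly $-\tfrac{\chi}{2\pi}m(t)^2$ (the kernel exponent $|x-y|^{2-d}$ equals $1$), the $\Lambda^\alpha$ contribution is bounded by $C_\alpha m_0^{\alpha/2}N^{1-\alpha/2}$, the advection contribution by $2\|u\|_\infty\sqrt{m_0N}$, and the reaction contribution is $\le 0$. If $m(t)\ge\delta$ for all $t\ge 0$ and $\chi$ is large compared with $\delta^{-2}$, these combine to give $N'(t)\le-\mu<0$ as long as $N$ stays below a ($\chi$‑dependent) barrier that also exceeds $N(0)$, which forces $N$ negative in finite time $\sim N(0)/\mu$ — impossible, since $N\ge 0$. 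Hence $m$ must fall below $\delta$ at some finite time and monotonicity gives $\lim_{t\to\infty}m(t)\le\delta=:C_2$; optimizing the threshold yields $\delta^2\sim C(\alpha,\rho_0,u)/\chi\to 0$ as $\chi\to\infty$. Since the reaction entered only through its favorable sign, $C_2$ is independent of $\epsilon$ and $q$; and because $0<C_1\le\lim m\le C_2$, the constant $C_1$ tends to $0$ as $\chi\to\infty$ as well.

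The main obstacle is that for $\alpha<2$ the $\alpha$‑stable semigroup has a heavy tail $\sim t|x|^{-d-\alpha}$, so $N(t)\equiv+\infty$ for every $t>0$ and the computation above is only formal. The fix — and the bulk of the real work — is to run the argument with a truncated weighted moment $N_R(t)=\int|x|^2\psi(x/R)\rho\,dx$ (finite for all $t$), with $R=R(\chi)$ chosen suitably, and to control the nonlocal error terms: the error in the $\Lambda^\alpha$ piece is $O(R^{2-\alpha}m_0)$, while the errors in the chemotactic and advective pieces are governed by the escaping mass $\int_{|x|>R}\rho(\cdot,t)\,dx$. Here $\alpha>1$ is essential: it gives finiteness of the moments $\int|x|^\beta\rho\,dx$ for $\beta\in(1,\alpha)$, which grow at most linearly in $t$ (using the uniform $L^\infty$ bound), so that over the finite horizon $T_1\sim N(0)/\mu$ — finite because $N(0)=\int|x|^2\rho_0<\infty$ for Schwartz data — the escaping mass is as small as needed once $R$ is large, while the admissible $\delta$ still tends to $0$ as $\chi\to\infty$. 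Making these estimates quantitative, and in particular showing that the truncated moment retains the $-\tfrac{\chi}{2\pi}m^2$ contraction rather than just an annular remainder, is the delicate point.
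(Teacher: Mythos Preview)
Your overall architecture is right, but both halves diverge from the paper and the lower bound has a real gap.

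\textbf{Lower bound.} The unconditional decay $\|\rho(\cdot,t)\|_{L^\infty}\lesssim(1+t)^{-d/\alpha}$ is not established, and with chemotaxis present it is not clear it holds. Your Riccati inequality $\tfrac{d}{dt}\|\rho\|_{L^p}^p\le -a(\|\rho\|_{L^p}^p)^{1+\sigma}+b$ has $b>0$ (the leftover multiple of $m(t)\le m_0$ after Young's inequality), so it yields only a uniform bound, not decay; neither Nash nor Duhamel rescues this, since the positive chemotactic contribution $\chi(p-1)\int\rho^{p+1}$ is of order $\chi N_0\|\rho\|_{L^p}^p$ and does not vanish unless $m(t)$ is already small. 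The paper instead argues \emph{conditionally}: if at some $t_0$ the mass satisfies $C(q,\alpha)\,\chi\,\|\rho(\cdot,t_0)\|_{L^1}^{\frac{\alpha}{2}(1+\frac{2}{2q-2+\alpha})}<2N_0^{-\frac{2-\alpha}{2}}$, then dissipation absorbs chemotaxis in the $L^q$ energy identity and one gets $\|\rho(\cdot,t)\|_{L^q}^q\lesssim(t-t_0)^{-2(q-1)/\alpha}$; from there the proof mirrors the chemotaxis-free case (including the monotonicity of $\|\rho\|_{L^q}/\|\rho\|_{L^1}$ under the same threshold). If the threshold is never crossed, the lower bound is immediate.

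\textbf{Upper bound.} You correctly diagnose that the second moment is infinite for $\alpha<2$, but the paper's fix is not a truncated second moment $\int|x|^2\psi(x/R)\rho$. Following Biler--Karch, it uses the sub-$\alpha$ weight $\varphi(x)=(1+|x|^2)^{\gamma/2}-1$ with $\gamma\in(1,\alpha)$ and tracks $w(t)=\int\varphi\,\rho$. The point is that $\Lambda^\alpha\varphi\in L^\infty$, so the fractional dissipation contributes only $O(m)$ to $w'(t)$ --- no scale parameter $R$ to balance --- while the chemotactic term is bounded below via a uniform-convexity estimate $(\nabla\varphi(x)-\nabla\varphi(y))\cdot(x-y)\ge K|x-y|^2/(1+|x|^{2-\gamma}+|y|^{2-\gamma})$, which after H\"older produces $-C_2\chi\,m^{1+2/\gamma}(m+2w)^{-(2-\gamma)/\gamma}$ in place of your $-\tfrac{\chi}{2\pi}m^2$. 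Your truncated-moment route would have to control escaping mass via exactly these $\gamma$-moments (your linear-growth claim for $\int|x|^\beta\rho$ already presupposes $\Lambda^\alpha\varphi\in L^\infty$), so it rests on the same ingredients while carrying an extra $R^{2-\alpha}$ error that grows with $R$; it is cleaner to run the virial argument on $w$ directly.
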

\begin{remark} 
\textcolor{white}{.} \par
\begin{adjustwidth}{-0.01cm}{0cm}
\begin{enumerate}
    \item By $u \in C^{\infty}(\mathbb{R}^d \times[0, \infty))$, we mean that, additionally to being infinitely differentiable on $\mathbb{R}^d\times[0, \infty)$, $u$ and all its derivatives are uniformly bounded over $(x, t) \in \mathbb{R}^d \times[0, T]$, for every $T>0$. 
    This definition is adjusted for our purposes and differs from the standard interpretation of a function on $C^{\infty}(\mathbb{R}^d \times[0, \infty))$. 
    Accordingly, the norm $\|u\|_{\mathcal{C}^s}$ is defined as 
    \begin{equation*}
         \|u\|_{\mathcal{C}^s} = \sum_{|\kappa| \leq s} \sup_{(x, t) \in \mathbb{R}^d \times [0, T]} \left| \partial^{\kappa} u(x, t) \right|,
    \end{equation*}
    where $\kappa$ is a multi-index, with this definition holding for every $T > 0$.
    \item The assumptions that $u \in C^{\infty}(\mathbb{R}^d \times[0, \infty))$ and $\rho_0 \in \mathcal{S}$ can be relaxed. It suffices for $\rho_0$, for example, to exhibit sufficiently rapid decay and minimal regularity.  
    \item \Cref{Theorem-1.2-k} comprises two parts: \Cref{Theorem-4.3-k}, where we establish a lower bound for the $L^1$ norm of $\rho$, and \Cref{Theorem-4.2-k}, where we establish an upper bound. Additionally, in these theorems, we relax the requirement that the initial data, $\rho_0$, belongs to $\mathcal{S}$.
    \item In the presence of chemotaxis, particularly for $d=2$, \citet{Kiselev-Biomixing} established, for $\alpha=2$, the existence of solutions where the lower bound of its $L^1$ norm is independent of the reaction term, and thus independent of the coupling of the reaction term $\epsilon$. 
    However, for  $1<\alpha <2$, the possible dependence on $\epsilon$ of a condition in the proof of \Cref{Theorem-4.2-k} (condition \eqref{eq-assumption}) challenges this assertion.
    Therefore, we can not affirm that the lower bound of the $L^1$ norm does not depend on $\epsilon$.
\end{enumerate}
\end{adjustwidth}
\end{remark}

Note that the constants $C_0$ and $C_1$ are independent of the flow field $u$. This suggests that the reaction rate has a limited threshold, above which it cannot be increased, irrespective of the flow's strength or form. 
Additionally, we can see, from the proof of \Cref{Theorem-1.1-k}, that  in chemotaxis-free settings, the quantity that reacts, $\displaystyle m_0-\lim _{t \rightarrow \infty} m(t)$, exhibits a decrease of order $\epsilon$.

In scenarios involving chemotaxis, the chemotactic sensitivity $\chi$ also contributes to shaping lower bound of the total fraction of unfertilized eggs.
Moreover, in these scenarios, the reaction term does not quantitatively impact the estimates for the upper bound for $m(t)$, that is, all estimates of the $L^1$ norm of $\rho$ are independent of the coupling of the reaction term, $\epsilon$. 
This implies that the amount of the density that reacts 
satisfies a lower bound independent of $\epsilon$.  
Note that the chemotactic term, in contrast to the flow and diffusion alone, plays a crucial role in achieving highly efficient fertilization rates, as in the chemotactic scenario, both the upper and lower bounds of $m(t)$ decrease as the chemotaxis strength, $\chi$, increases.
\begin{remark} \label{Remark-classical-Keller-Segel-X-reaction-term}
The reaction term itself may not directly impact the total fraction of reacting density, but its presence plays a crucial role in controlling the growth of the solution's $L^{\infty}$ norm. This regulation arises from the balance between chemotaxis and the reaction term.
For instance, in a scenario in two dimensions where the reaction term is absent and $\alpha=2$ (classical parabolic-elliptic Keller-Segel), for an initial mass sufficiently large, chemotaxis alone would lead to a blowup, causing the density $\rho$ to concentrate as a Dirac mass.
\end{remark}

Furthermore, we prove the existence of solution for any initial condition sufficiently regular, emphasizing that the presence of an additional negative reaction term $-\epsilon \rho^q$ with $q>2$ prevents the solution from losing regularity in finite time, as proved by \citet{Kiselev-Biomixing} for model \eqref{eq-1.3-reaction-Keller-Segel-model}. 
It is crucial to stress that, as highlighted in \Cref{Remark-classical-Keller-Segel-X-reaction-term}, the classical parabolic-elliptic Keller-Segel system \eqref{eq-1.2-parabolic-elliptic-Keller-Segel-model}, for $d\geq 2$, does not follow the same dynamics.
This system exhibits critical behavior in $L^{d/2}$, \ie a small initial condition in the $L^{d/2}$ space ensures global well-posedness over time, while a large one leads to a blowup, signifying that the solution does not remain bounded; 
in particular, for $d=2$, solutions exist globally in time for initial masses less than  $8 \pi/\chi$, while a blowup occurs in finite time for  $m_0>8 \pi/\chi$ \citep{Nagai-1995, Horstmann03from1970, book-base, Blanchet-article-parabolic-elliptic-PKS}. 

This critical behavior also holds for the nonlocal parabolic-elliptic  Keller-Segel system, represented by equation \eqref{eq-1.2-parabolic-elliptic-Keller-Segel-model} with fractional Laplacian $1<\alpha<2$, for which the existence of blowing-up solutions was proved \citep{Biler-fractional-9, Biler-fractional-2017, Exploding-solutions-nonlocal-quadratic-evolution}.
In this case, \citet{Biler-fractional-9} proved that the system exhibits critical behavior in $L^{d/\alpha}$ space.
\section{Framework Overview}  \label{Plan-of-the-work}
The structure of this work is designed to provide a systematic exploration of our research objectives. 
%
In \Cref{sec:Preliminaries}, we introduce the specific type of solution considered for \eqref{eq-4.1-k}, establish basic notation, and present key results that play a pivotal role in proving the existence of the solution. 

Proceeding to \Cref{sec:Global-Existence}, we ensure local existence and uniqueness of solutions using a standard fixed-point procedure. Subsequently, we demonstrate control over the growth of solution norms, culminating in the establishment of global well-posedness.  
%
Then, we end this section showing that for a nonnegative initial condition in \eqref{eq-4.1-k}, the solution remains nonnegative. 
It is crucial to emphasize that our focus on nonnegativity is particularly pertinent, given that $\rho$ in \eqref{eq-4.1-k} represents densities, which inherently requires $\rho$ to remain nonnegative.

Finally, in \Cref{sec:Reaction-efficiency}, we delve into an analysis of the behavior of the total fraction of unfertilized eggs, expressed by \eqref{unfertilized-eggs}, in both chemotaxis and chemotaxis-free scenarios.  
\section{Preliminaries}
\label{sec:Preliminaries}
\begin{remark}[\textbf{Notation}]
Throughout this work, constants, which may change from line to line,  will be denoted by the same letter $C$, and are independent of $\rho$, $x$, and $t$. The notation $C = C(*)$ emphasizes the dependence of $C$ on a parameter  “$*$”.
%
\end{remark} 
\begin{Definition}[\textbf{Mild solution}] 
Let us rewrite equation \eqref{eq-4.1-k} in an integral form using the Duhamel's principle: \vspace{-0.27cm}
\begin{equation}
    \label{eq-5.4-k}
    \rho(x, t)=K^{\alpha}_t *\rho_0(x)+\int_0^t K^{\alpha}_{t-r} *\left(-\nabla \cdot(u \rho)-\epsilon \rho^q+\chi \nabla \cdot\left(\rho \nabla \Delta^{-1} \rho\right)\right) \mathrm{d} r,
\end{equation}
where $K^{\alpha}_{t}(x)$ is defined through the Fourier transform as
\begin{equation}
    \label{definition-kt}
    \begin{split}
        K^{\alpha}_{t}(x) & =\int_{\mathbb{R}^{d}} \mathrm{e}^{-t(2\pi|\xi|)^{\alpha}}\mathrm{e}^{2\pi i x \cdot \xi}  \mathrm{d} \xi
        \\
        &=(2 \pi)^{-d} t^{-\frac{d}{\alpha}} \int_{\mathbb{R}^{d}} \mathrm{e}^{-|\xi|^{\alpha}}\mathrm{e}^{i \frac{x}{t^{1/ \alpha}} \cdot \xi} \mathrm{d} \xi \\ 
        & = t^{-\frac{d}{\alpha}} K^{\alpha}\left(\frac{x}{t^{1/\alpha}}\right).
    \end{split}
\end{equation}
Thus, the kernel function can be written as 
\begin{equation}
    \label{definition-k}
    K^{\alpha}(x)=(2 \pi)^{-d} \int_{\mathbb{R}^{d}} \mathrm{e}^{i x \cdot \xi} \mathrm{e}^{-|\xi|^{\alpha}} \mathrm{d} \xi.
\end{equation}

A solution to \eqref{eq-5.4-k} is called the mild solution to  system \eqref{eq-4.1-k}.
\end{Definition}

\begin{lemma} \label{integral-k}
     Let $\alpha \in (0,2]$, and consider the functions $K^{\alpha}_t $ and $K^{\alpha}$ in \eqref{definition-kt} and \eqref{definition-k}, respectively. Then,
     \renewcommand{\labelenumi}{(\alph{enumi})}
     \renewcommand{\theenumi}{(\alph{enumi})}
     \begin{enumerate}
        \item $K^{\alpha} \in  C_0^{\infty}$; \label{estimate-k(a)}
        \item For any $1 \leq p \leq \infty$, $K^{\alpha}$, $ \nabla K^{\alpha}$, $K^{\alpha}_t$, $ \nabla  K^{\alpha}_t \in L^p$, where $0<t<\infty$  \citep{Miao-2008-Well-posedness}; \label{estimate-k(b)}
        \item $\displaystyle \int_{\mathbb{R}^d} K^{\alpha}(x)   \mathrm{~d} x=1$, $\displaystyle \int_{\mathbb{R}^d} K^{\alpha}_{t}(x)   \mathrm{~d} x=1$, and $\displaystyle \int_{\mathbb{R}^d} \nabla K^{\alpha}(x)   \mathrm{~d} x=0.$ \label{estimate-k(c)}
     \end{enumerate}
\end{lemma}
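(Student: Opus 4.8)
The plan is to treat the three items in order, using the Fourier representations \eqref{definition-kt}--\eqref{definition-k} and the super-polynomial decay of the multiplier $g(\xi):=e^{-|\xi|^{\alpha}}$, which for $\alpha>0$ decays faster than any negative power of $|\xi|$ and hence satisfies $\xi\mapsto\xi^{\kappa}g(\xi)\in L^{1}(\mathbb{R}^{d})$ for every multi-index $\kappa$. For part (a), this integrability lets us differentiate under the integral sign in \eqref{definition-k} arbitrarily often, giving $K^{\alpha}\in C^{\infty}$ with $\partial^{\kappa}K^{\alpha}(x)=(2\pi)^{-d}\int_{\mathbb{R}^{d}}(i\xi)^{\kappa}e^{-|\xi|^{\alpha}}e^{ix\cdot\xi}\,\mathrm{d}\xi$; since each integrand lies in $L^{1}$, the Riemann--Lebesgue lemma yields $\partial^{\kappa}K^{\alpha}(x)\to 0$ as $|x|\to\infty$, so $K^{\alpha}$ and all its derivatives belong to $C_{0}$, i.e.\ $K^{\alpha}\in C_{0}^{\infty}$.

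For part (b), boundedness of $K^{\alpha}$ and $\nabla K^{\alpha}$ from (a) gives the $L^{\infty}$ membership. The point is the pointwise decay $|K^{\alpha}(x)|+|\nabla K^{\alpha}(x)|\le C(1+|x|)^{-(d+\alpha)}$, which can be obtained by splitting the oscillatory integral \eqref{definition-k} at $|\xi|\sim|x|^{-1}$ and integrating by parts repeatedly on the outer region, using smoothness of $g$ away from the origin, or simply quoted from \citep{Miao-2008-Well-posedness}; any decay faster than $|x|^{-d}$ already suffices to conclude $K^{\alpha},\nabla K^{\alpha}\in L^{1}$, and interpolation with the $L^{\infty}$ bound then gives $K^{\alpha},\nabla K^{\alpha}\in L^{p}$ for all $1\le p\le\infty$. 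The statements for $K^{\alpha}_{t}$ and $\nabla K^{\alpha}_{t}$ follow by scaling: from \eqref{definition-kt}, $K^{\alpha}_{t}(x)=t^{-d/\alpha}K^{\alpha}(t^{-1/\alpha}x)$ and $\nabla K^{\alpha}_{t}(x)=t^{-(d+1)/\alpha}(\nabla K^{\alpha})(t^{-1/\alpha}x)$, so $\|K^{\alpha}_{t}\|_{L^{p}}=t^{-\frac{d}{\alpha}(1-\frac1p)}\|K^{\alpha}\|_{L^{p}}$ and $\|\nabla K^{\alpha}_{t}\|_{L^{p}}=t^{-\frac{d}{\alpha}(1-\frac1p)-\frac1\alpha}\|\nabla K^{\alpha}\|_{L^{p}}$, both finite for every $0<t<\infty$.

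For part (c), inserting \eqref{definition-k} into $\int K^{\alpha}$ does not give an absolutely convergent double integral, so I would regularize by a Gaussian: since $K^{\alpha}\in L^{1}$ by (b), dominated convergence gives $\int_{\mathbb{R}^{d}}K^{\alpha}(x)\,\mathrm{d}x=\lim_{\delta\to0^{+}}\int_{\mathbb{R}^{d}}K^{\alpha}(x)e^{-\delta|x|^{2}}\,\mathrm{d}x$, and for $\delta>0$ Fubini and the Gaussian integral yield
\begin{equation*}
\int_{\mathbb{R}^{d}}K^{\alpha}(x)e^{-\delta|x|^{2}}\,\mathrm{d}x=(2\pi)^{-d}\int_{\mathbb{R}^{d}}e^{-|\xi|^{\alpha}}\Bigl(\tfrac{\pi}{\delta}\Bigr)^{d/2}e^{-|\xi|^{2}/(4\delta)}\,\mathrm{d}\xi .
\end{equation*}
The kernels $(2\pi)^{-d}(\pi/\delta)^{d/2}e^{-|\xi|^{2}/(4\delta)}$ form an approximate identity (nonnegative, total mass $1$), and $e^{-|\xi|^{\alpha}}$ is continuous with value $1$ at the origin, so the right-hand side converges to $1$; hence $\int K^{\alpha}=1$, and $\int K^{\alpha}_{t}=1$ then follows from the change of variables $y=t^{-1/\alpha}x$. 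For $\int\nabla K^{\alpha}=0$, since $\nabla K^{\alpha}\in L^{1}$ and $K^{\alpha}\in C_{0}$ with the decay from (b), the divergence theorem on $\{|x|\le R\}$ gives $\int_{|x|\le R}\partial_{j}K^{\alpha}\,\mathrm{d}x=\int_{|x|=R}K^{\alpha}\nu_{j}\,\mathrm{d}S$, and the surface term is $O(R^{d-1}(1+R)^{-(d+\alpha)})\to 0$; equivalently, the same Gaussian regularization identifies $\int\partial_{j}K^{\alpha}\,\mathrm{d}x$ with $i\xi_{j}e^{-|\xi|^{\alpha}}\big|_{\xi=0}=0$.

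The one genuinely nontrivial ingredient is the sharp decay estimate for $K^{\alpha}$ and $\nabla K^{\alpha}$ used in part (b): for $\alpha=2$ it is immediate from the explicit heat kernel and for $\alpha=1$ from the Poisson kernel, but for general $\alpha\in(0,2)$ the multiplier $e^{-|\xi|^{\alpha}}$ is only smooth away from $\xi=0$, so extracting the rate $(1+|x|)^{-(d+\alpha)}$ requires a dyadic decomposition of the oscillatory integral together with careful integration by parts — precisely the content of \citep{Miao-2008-Well-posedness}, which we invoke. The remaining steps (differentiation under the integral, Riemann--Lebesgue, interpolation, scaling, and the regularized Fubini computations) are routine.
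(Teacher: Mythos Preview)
Your proposal is correct and follows essentially the same approach as the paper: Riemann--Lebesgue for (a), the cited decay estimate for (b), and evaluating the Fourier transform at the origin for (c). The only cosmetic difference is in (c), where the paper invokes the Fourier inversion theorem directly (since $K^{\alpha}$ and $\widehat{K^{\alpha}}$ are both in $L^{1}$ by (b), one has $\int K^{\alpha}\,\mathrm{d}x=\widehat{K^{\alpha}}(0)=1$ and $\int\partial_{j}K^{\alpha}\,\mathrm{d}x=(\partial_{j}K^{\alpha})\widehat{\;}(0)=0$ in one line), whereas your Gaussian regularization and divergence-theorem arguments effectively re-derive that special case by hand; both are fine, the paper's is just shorter.
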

\begin{proof}
    \ref{estimate-k(a)} Note that $\widehat{K^{\alpha}}\left(\xi\right)=\mathrm{e}^{-|2 \pi \xi|^{\alpha}} \in L^1$. Thus, from the Fourier transform, $K^{\alpha} \in L^{\infty} \cap C$. Moreover, from the Riemann Lebesgue Lemma, \, $ \mathcal{F} (L^1) \subset C_0$, $K^{\alpha} \in L^{\infty} \cap C_0$. 
    In an analogous way, we have  $\nabla K^{\alpha} \in L^{\infty} \cap C_0$,  since $2 \pi \mathrm{i} \xi \mathrm{e}^{-|2 \pi \xi|^{ \alpha}} \in (L^1)^d$. Additionally, $\partial^\eta K^{\alpha} \in L^{\infty} \cap C_0$,  since $ (2 \pi \xi)^\eta \mathrm{e}^{-|2 \pi \xi|^{ \alpha}} \in L^1$, where for higher-order derivatives we use multi-index notation.   
    Therefore, we conclude that $K^{\alpha} \in  C_0^{\infty}$.   \vspace{0.2cm}
    
    \ref{estimate-k(b)} See \citep{Miao-2008-Well-posedness}. \vspace{0.2cm}

    \ref{estimate-k(c)}
    From the Fourier Inversion Theorem, we know that if $f$ and its Fourier transform $\widehat{f}$ belong to $L^1$, then $f$ agrees almost everywhere with a continuous function $f_0$, and $(\widehat{f})^{\vee}=\left(f^{\vee}\right)^{\wedge}=f_0$. Applying this to our context where $K^{\alpha}$ and its Fourier transform $\widehat{K^{\alpha}}(\xi)$ are in $L^1$, we  deduce that $\displaystyle \int_{\mathbb{R}^d} K^{\alpha}(x) \mathrm{~d} x =\widehat{K^{\alpha}}(0)=1$.
    
    Furthermore, considering the transformation $T(x)=t^{-1} x$ for $t>0$, the expression $(f \circ T) \widehat{\; \;}(\xi)$ is given by $t^d \widehat{f}(t \xi)$. Applying this to our scenario where $K^{\alpha}_{t}(x) = t^{-\frac{d}{\alpha}} K^{\alpha}\left(\frac{x}{t^{1/\alpha}}\right)$, we obtain $ \displaystyle \int_{\mathbb{R}^d} K^{\alpha}_{t}(x) \mathrm{~d} x  =\widehat{K^{\alpha}_{t}}(0)=\widehat{K^{\alpha}}(0)=1$.
%
    Moreover, utilizing the property that for $f \in C^1$ with $\nabla f \in L^1$, the Fourier transform $\left(\nabla f\right) \widehat{\;\;}(\xi)$ equals $2 \pi i \xi \widehat{f}(\xi)$, we derive the final equality in \ref{estimate-k(c)}.
\end{proof}
\begin{Definition}[Banach space $K_{s, \beta}$]
    Let $d$ be an arbitrary dimension.  We define the Banach space $K_{s, \beta}$ by the norm $\|\varphi\|_{K_{s, \beta}}=\|\varphi\|_{M_{\beta}}+\|\varphi\|_{H^s}$, where $s \geq 0$, $\alpha \in (0,2]$, and $0 \leq \beta < \alpha$. Here, $H^{s}$ is the standard Sobolev spaces in $\mathbb{R}^{d}$, and the norm $\|\cdot\|_{M_{\beta}}$ is given by
    \begin{equation*}
        \|\varphi\|_{M_{\beta}}=\int_{\mathbb{R}^d}(|\varphi(x)|+|\nabla \varphi(x)|)\left(1+|x|^\beta\right) \mathrm{~d} x.
    \end{equation*}
\end{Definition}
 
Now, consider the following Lemmas: 
\begin{lemma}{\citep[Lemma 6]{inequality-2005}} \label{lemma-inequality-2005} 
    Let $\gamma, \; \beta$ be multi-indices, $|\beta|<|\gamma|+\alpha \max(j,1)$, $j = 0, \; 1, \; 2, \;  \cdots ,$ $1 \leq p \leq \infty$, and $\alpha \in (0,2]$. Then,
    \begin{equation}
        \label{estimativa-k-lemma-inequality-2005}
        \|x^{\beta} D_{t}^{j} D^{\gamma}K^{\alpha}_t \|_{L^{p}}= Ct^{\frac{|\beta|-|\gamma|}{\alpha}-j-\frac{d(p-1)}{\alpha p}} 
    \end{equation}
    for some constant C depending only on $\alpha, \; \gamma, \; \beta, \; j, \; p,$ and the space dimension $d$.
\end{lemma}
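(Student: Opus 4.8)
The plan is to exploit the exact self-similar structure of $K^{\alpha}_t$, reduce the $L^p$ norm to a $t$-independent integral by a change of variables, and then control that integral by sharp pointwise decay of the associated profile kernel; this is precisely the content of \citep[Lemma 6]{inequality-2005}, and I indicate the argument here.

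First I would record the scaling identity. Differentiating \eqref{definition-kt}--\eqref{definition-k} under the integral sign, and using that in the convention of \eqref{definition-kt} one has $\widehat{K^{\alpha}_t}(\xi)=\mathrm e^{-t(2\pi|\xi|)^{\alpha}}$, the Fourier side of the kernel we must estimate is
\begin{equation*}
\widehat{D_t^{\,j} D^{\gamma} K^{\alpha}_t}(\xi)=(2\pi \mathrm i\xi)^{\gamma}\bigl(-(2\pi|\xi|)^{\alpha}\bigr)^{j}\,\mathrm e^{-t(2\pi|\xi|)^{\alpha}}.
\end{equation*}
Introduce the profile $\Phi^{\alpha}_{j,\gamma}$ by $\widehat{\Phi^{\alpha}_{j,\gamma}}(\xi)=(2\pi \mathrm i\xi)^{\gamma}(-(2\pi|\xi|)^{\alpha})^{j}\mathrm e^{-(2\pi|\xi|)^{\alpha}}$, so that $\Phi^{\alpha}_{0,0}=K^{\alpha}$ and, more generally, $\Phi^{\alpha}_{j,\gamma}=(-\Lambda^{\alpha})^{j}D^{\gamma}K^{\alpha}$ evaluated at $t=1$. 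A one-line computation with the parabolic scaling $\xi\mapsto t^{1/\alpha}\xi$ then gives the exact identity
\begin{equation*}
D_t^{\,j} D^{\gamma} K^{\alpha}_t(x)=t^{-\frac{d}{\alpha}-j-\frac{|\gamma|}{\alpha}}\,\Phi^{\alpha}_{j,\gamma}\!\left(\frac{x}{t^{1/\alpha}}\right),
\end{equation*}
hence $x^{\beta}D_t^{\,j}D^{\gamma}K^{\alpha}_t(x)=t^{\frac{|\beta|-|\gamma|-d}{\alpha}-j}\,\bigl(y^{\beta}\Phi^{\alpha}_{j,\gamma}\bigr)\!\bigl(xt^{-1/\alpha}\bigr)$. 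Substituting $x=t^{1/\alpha}y$ in the $L^p$ norm (Jacobian $t^{d/\alpha}$) produces exactly
\begin{equation*}
\bigl\|x^{\beta}D_t^{\,j}D^{\gamma}K^{\alpha}_t\bigr\|_{L^p}=t^{\frac{|\beta|-|\gamma|}{\alpha}-j-\frac{d(p-1)}{\alpha p}}\,\bigl\|y^{\beta}\Phi^{\alpha}_{j,\gamma}\bigr\|_{L^p},
\end{equation*}
so the claimed power of $t$ appears automatically, with $C=\|y^{\beta}\Phi^{\alpha}_{j,\gamma}\|_{L^p}$, and the whole statement reduces to showing this constant is finite under the hypothesis $|\beta|<|\gamma|+\alpha\max(j,1)$.

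For the finiteness I would prove the pointwise bound $|\Phi^{\alpha}_{j,\gamma}(y)|\lesssim(1+|y|)^{-d-|\gamma|-\alpha\max(j,1)}$. Split $\widehat{\Phi^{\alpha}_{j,\gamma}}$ with a smooth cutoff into a piece supported away from the origin, which is $C^{\infty}$ with exponentially decaying derivatives and hence has a Schwartz (rapidly decaying) inverse transform, and a piece supported near the origin. Expanding $\mathrm e^{-(2\pi|\xi|)^{\alpha}}=1-(2\pi|\xi|)^{\alpha}+\cdots$, the even-power terms of this near-origin piece are smooth and compactly supported (rapid decay again), while the least regular term is of the form $\xi^{\gamma}|\xi|^{\alpha\max(j,1)}$ times a smooth cutoff (for $j=0$ this comes from the $-|\xi|^{\alpha}$ term of the exponential, for $j\ge 1$ from the factor $|\xi|^{\alpha j}$); by the classical formula for the Fourier transform of a homogeneous symbol of non-even degree against a cutoff, its inverse transform decays like $|y|^{-d-|\gamma|-\alpha\max(j,1)}$. (Alternatively one may simply invoke the known sharp asymptotics and derivative estimates for $\alpha$-stable heat kernels.) Multiplying by $|y|^{|\beta|}$ and integrating, $\|y^{\beta}\Phi^{\alpha}_{j,\gamma}\|_{L^p}^p$ converges at infinity exactly when $p\bigl(|\beta|-d-|\gamma|-\alpha\max(j,1)\bigr)<-d$; under the hypothesis $|\beta|<|\gamma|+\alpha\max(j,1)$ this holds for every $p\in[1,\infty]$ — the $p=1$ case being the binding one — and there is no issue at the origin since $\Phi^{\alpha}_{j,\gamma}$ is bounded there. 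This closes the argument.

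The main obstacle is the pointwise decay estimate for $\Phi^{\alpha}_{j,\gamma}$. Unlike the Gaussian case $\alpha=2$, for $\alpha\in(1,2)$ the symbol $(-|\xi|^{\alpha})^{j}\mathrm e^{-|\xi|^{\alpha}}$ is only finitely differentiable at $\xi=0$, so its Fourier transform decays merely algebraically, and one must correctly identify the least regular homogeneous term and track its exponent uniformly in the multi-indices $\beta,\gamma$ and in $j$. Getting this bookkeeping right is the technical heart of the matter; it is exactly what is carried out in \citep[Lemma 6]{inequality-2005}, which we cite.
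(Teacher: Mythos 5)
Your argument is correct, and it is essentially the standard proof: the paper itself does not prove this lemma but quotes it from \citep[Lemma 6]{inequality-2005}, where the same strategy — the exact self-similar identity $D_t^{\,j}D^{\gamma}K^{\alpha}_t(x)=t^{-\frac{d+|\gamma|}{\alpha}-j}\Phi^{\alpha}_{j,\gamma}(xt^{-1/\alpha})$ reducing everything to finiteness of $\|y^{\beta}\Phi^{\alpha}_{j,\gamma}\|_{L^p}$, combined with the sharp algebraic decay $|\Phi^{\alpha}_{j,\gamma}(y)|\lesssim(1+|y|)^{-d-|\gamma|-\alpha\max(j,1)}$ of the $\alpha$-stable profile — is carried out. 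Your bookkeeping with $\max(j,1)$ and the observation that $p=1$ is the binding case exactly reproduce the hypothesis $|\beta|<|\gamma|+\alpha\max(j,1)$, so the proposal is sound as it stands.
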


\begin{lemma} \label{Lemma-5.1-k}
    Assume $\varphi \in K_{s, \beta}$, with $s \geq 0$, $\alpha \in (0,2]$, and $0 \leq \beta < \alpha$ . We have 
    \begin{gather}
        \|K^{\alpha}_t *\varphi\|_{M_{\beta}} \leq C\left(1+t^{\beta / \alpha}\right)\|\varphi\|_{M_{\beta}}, \quad\|\nabla K^{\alpha}_t *\varphi\|_{M_{\beta}} \leq C\left(t^{-1 / \alpha}+t^{(\beta-1) / \alpha}\right)\|\varphi\|_{M_{\beta}} , \label{estimativa-1-k}\\
        \|K^{\alpha}_t *\varphi\|_{H^s} \leq\|\varphi\|_{H^s}, \quad\|\nabla K^{\alpha}_t *\varphi\|_{H^s} \leq C t^{-1 / \alpha}\|\varphi\|_{H^s}. \label{estimativa-2-k}
    \end{gather}   
    \indent  As a consequence, \vspace{-0.3cm}
    \begin{equation}
        \label{estimativa-3-k}
        \|K^{\alpha}_t *\varphi\|_{K_{s, \beta}} \leq C\left(1+t^{\beta / \alpha}\right)\|\varphi\|_{K_{s, \beta}}, \quad \text{ and } \quad \|\nabla K^{\alpha}_t *\varphi\|_{K_{s, \beta}} \leq C\left(t^{-1 / \alpha}+t^{(\beta-1) / \alpha}\right)\|\varphi\|_{K_{s, \beta}} .
    \end{equation}
\end{lemma}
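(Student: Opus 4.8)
The plan is to reduce everything to convolution estimates in the spirit of Young's inequality, using the decay bounds on $K^\alpha_t$ and its derivatives supplied by \Cref{lemma-inequality-2005} (with the multi-index parameters there chosen to match the weights and derivatives appearing in the $M_\beta$ and $H^s$ norms). First I would dispose of the $H^s$ bounds in \eqref{estimativa-2-k}: passing to Fourier variables, $\widehat{K^\alpha_t \ast \varphi}(\xi) = e^{-t(2\pi|\xi|)^\alpha}\widehat{\varphi}(\xi)$, and since $0 \le e^{-t(2\pi|\xi|)^\alpha} \le 1$ we immediately get $\|K^\alpha_t \ast \varphi\|_{H^s} \le \|\varphi\|_{H^s}$; for the gradient we also use $|2\pi\xi|\, e^{-t(2\pi|\xi|)^\alpha} \le C t^{-1/\alpha}$ (maximize $r e^{-tr^\alpha}$ over $r \ge 0$), which yields the $t^{-1/\alpha}$ factor after multiplying by $(1+|\xi|^2)^{s/2}$ — one must check the symbol $|2\pi\xi| e^{-t(2\pi|\xi|)^\alpha}(1+|\xi|^2)^{s/2}/(1+|\xi|^2)^{s/2}$ stays bounded by $Ct^{-1/\alpha}$ uniformly, which follows from the same one-variable maximization applied to the low- and high-frequency regimes separately.

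Next, for the $M_\beta$ bounds \eqref{estimativa-1-k} I would expand the weight: $\|K^\alpha_t \ast \varphi\|_{M_\beta} = \int (|K^\alpha_t\ast\varphi| + |\nabla(K^\alpha_t \ast\varphi)|)(1+|x|^\beta)\,dx$, and note $\nabla(K^\alpha_t\ast\varphi)$ can be written either as $(\nabla K^\alpha_t)\ast\varphi$ or $K^\alpha_t \ast \nabla\varphi$; I would use the second form so that only $K^\alpha_t$ itself (no derivative) hits $\varphi$, keeping the $M_\beta$-structure intact. To handle the weight $|x|^\beta$ I would use the elementary inequality $|x|^\beta \le C(|x-y|^\beta + |y|^\beta)$, so that
\begin{equation*}
\int |x|^\beta\,|(K^\alpha_t\ast\varphi)(x)|\,dx \le C\int\!\!\int \big(|x-y|^\beta + |y|^\beta\big)\,|K^\alpha_t(x-y)|\,|\varphi(y)|\,dy\,dx,
\end{equation*}
and then Fubini plus $\||x|^\beta K^\alpha_t\|_{L^1} \le C t^{\beta/\alpha}$ and $\|K^\alpha_t\|_{L^1} \le C$ (both instances of \eqref{estimativa-k-lemma-inequality-2005} with $p=1$, $j=0$, appropriate $\gamma$, $\beta$) give the bound $C(1+t^{\beta/\alpha})\|\varphi\|_{M_\beta}$. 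The same scheme with $\nabla K^\alpha_t$ in place of $K^\alpha_t$ produces the extra factors $t^{-1/\alpha}$ and $t^{(\beta-1)/\alpha}$, again because $\|\nabla K^\alpha_t\|_{L^1} \le C t^{-1/\alpha}$ and $\||x|^\beta \nabla K^\alpha_t\|_{L^1} \le C t^{(\beta-1)/\alpha}$ by \Cref{lemma-inequality-2005}; here the constraint $\beta < \alpha$ (equivalently $|\beta| < |\gamma| + \alpha$ with $|\gamma|=1$) is exactly what makes those $L^1$ norms finite. Finally, \eqref{estimativa-3-k} is just the sum of \eqref{estimativa-1-k} and \eqref{estimativa-2-k} by the definition of the $K_{s,\beta}$ norm, with the understanding that $t^{-1/\alpha} + t^{(\beta-1)/\alpha}$ absorbs the $t^{-1/\alpha}$ from the Sobolev part (for $\beta \ge 1$ one dominates, for $\beta<1$ the other; in all cases the stated right-hand side is an upper bound).

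The main obstacle I anticipate is not any single estimate but the bookkeeping of weights and derivatives: one must be careful that when the gradient lands on the convolution it is moved onto $\varphi$ (not onto the kernel) in the terms where we want to preserve the full $M_\beta$ norm of $\varphi$, while in the $\|\nabla K^\alpha_t \ast \varphi\|$ estimates the derivative must stay on the kernel to produce the correct negative powers of $t$ — mixing these up gives either a divergent $t$-integral later or an unusable norm on the right-hand side. A secondary technical point is justifying the manipulations (Fubini, differentiation under the convolution, the pointwise weight inequality) for $\varphi$ merely in $K_{s,\beta}$ rather than Schwartz; this is routine by density of $\mathcal{S}$ in $K_{s,\beta}$ together with the fact that all the bounds obtained are continuous in the $K_{s,\beta}$ norm, so I would state it that way and not belabor it.
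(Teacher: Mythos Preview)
Your proposal is correct and follows essentially the same approach as the paper: handle the $H^s$ estimates by a Fourier multiplier argument (which the paper simply declares elementary and omits), and handle the $M_\beta$ estimates by splitting the weight via $(1+|x|^\beta)\le C(1+|x-y|^\beta)(1+|y|^\beta)$, applying Fubini, and invoking \Cref{lemma-inequality-2005} for the weighted $L^1$ norms of $K^\alpha_t$ and $\nabla K^\alpha_t$. The only cosmetic difference is that the paper further writes $(1+|x|^\beta)\le 1+\sum_i |x_i|^\beta$ to match the multi-index monomial weights $x^\beta$ in the statement of \Cref{lemma-inequality-2005}, whereas you apply the bound directly to $|x|^\beta$; this is harmless but worth making explicit, and note that the constraint $\beta<\alpha$ is already needed for the undifferentiated kernel (case $|\gamma|=0$, $j=0$, $\max(j,1)=1$), not only for $\nabla K^\alpha_t$.
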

\begin{proof}  
The proof of \eqref{estimativa-2-k} is elementary and we omit it. To establish \eqref{estimativa-1-k}, note that 
\begin{equation*}
    (1+|x|^{\beta})\leq C\left(1+|x-y|^{\beta}\right) \left(1+|y|^{\beta} \right) \qquad \text{and} \qquad \left(1+|x|^\beta\right) \leq 1+\sum_{i=1}^{d} |x_i|^\beta.
\end{equation*}
Hence, considering $\varphi \in M_{\beta}$ and $\psi, \, x_i^\beta  \psi \in L^{1}$ for $1\leq i \leq d$, employing Fubini's theorem yields  
\begin{equation*}
    \begin{split}
        \|\psi*\varphi\|_{M_{\beta}} & \leq \int_{\mathbb{R}^d}\int_{\mathbb{R}^d} \left(1+|x|^{\beta}\right)  |\psi(x-y)| \left( |\varphi(y)|+ |\nabla \varphi(y)|\right)\mathrm{~d} y \mathrm{~d} x\\ 
        & \leq C \int_{\mathbb{R}^d}\int_{\mathbb{R}^d} \left(1+|x-y|^{\beta}\right) \left(1+|y|^{\beta} \right)  |\psi(x-y)| \left(|\varphi(y)|+ |\nabla \varphi(y)|\right)\mathrm{~d} y \mathrm{~d} x\\ 
        & \leq C \|(1+|\cdot |^{\beta}) \psi \|_{L^{1}} \|\varphi\|_{M_{\beta}} \\
        & \leq  C \left\|\left(1+\sum_{i=1}^{d} |x_i|^\beta\right)  \psi \right\|_{L^{1}} \|\varphi\|_{M_{\beta}} \\
        & \leq  C\left( \|\psi \|_{L^{1}}+ \sum_{i=1}^{d} \| x_i^\beta \psi \|_{L^{1}} \right)\|\varphi\|_{M_{\beta}}. 
    \end{split}     
\end{equation*}

Now, set $\psi=K^{\alpha}_t$. From \eqref{estimativa-k-lemma-inequality-2005}, as $\beta <\alpha$, we obtain
\begin{equation*}
    \begin{split}
        \|K^{\alpha}_t *\varphi\|_{M_{\beta}} & \leq C\left( \|K^{\alpha}_t  \|_{L^{1}}+ \sum_{i=1}^{d} \| x_i^\beta K^{\alpha}_t  \|_{L^{1}} \right)\|\varphi\|_{M_{\beta}} \leq C (1+t^{\frac{\beta}{\alpha}}) \|\varphi\|_{M_{\beta}}.        
    \end{split}
\end{equation*}
Next, set $\psi=\nabla K^{\alpha}_t$. Again from   \eqref{estimativa-k-lemma-inequality-2005}, as $\beta <\alpha+1$, we find that
\begin{equation*}
    \begin{split}
        \|\nabla K^{\alpha}_t *\varphi\|_{M_{\beta}} & \leq C\left( \|\nabla K^{\alpha}_t  \|_{L^{1}}+ \sum_{i=1}^{d} \| x_i^\beta \nabla K^{\alpha}_t  \|_{L^{1}} \right)\|\varphi\|_{M_{\beta}} \leq C (t^{-\frac{1}{\alpha}}+t^{\frac{\beta-1}{\alpha}}) \|\varphi\|_{M_{\beta}}.        
    \end{split}
\end{equation*}

Finally, observe that \eqref{estimativa-3-k} is a direct consequence of the preceding calculations and the definition of the norm $\|\, \cdot \,\|_{K_{s, \beta}}$.
\end{proof}
\begin{remark}
    The constraint on the value of $\beta$ is rooted in the characteristic behavior of non-Gaussian Lévy $\alpha$-stable $(0<\alpha<2)$ semigroups. These semigroups exhibit decay solely at an algebraic rate of $|x|^{-d-\alpha}$ as $|x| \rightarrow \infty$ $\left(|K^{\alpha}(x)| \leq C(1+|x|)^{-d-\alpha}\right)$. Note that the divergence from this pattern in the case of the Gaussian kernel ($\alpha=2$), which undergoes exponential decay. 
\end{remark}

\begin{lemma}\label{Lemma-5.2-k}
    Let $q$ and $s$ be positive integers such that $s > d/2+ 1$, and $\beta \geq 0$. Consider $f, g \in K_{s, \beta}$. Then, the following estimates hold: \vspace{-0.3cm}
    \begin{gather}
        \|f^{q}-g^{q}\|_{H^{s}} \leq C\left(\|f\|_{H^{s}}^{q-1}+\|g\|_{H^{s}}^{q-1}\right)\|f-g\|_{H^{s}}, \label{eq-5.5-k}\\
        \|f^{q}-g^{q}\|_{M_{\beta}} \leq C\left(\|f\|_{H^{s}}^{q-1}+\|g\|_{H^{s}}^{q-1}\right)\|f-g\|_{M_{\beta}}, \label{eq-5.7-k} \\
        \|f \nabla \Delta^{-1} f-g \nabla \Delta^{-1} g\|_{H^{s}} \leq C\left(\|f\|_{H^{s}}+\|f\|_{M_{\beta}}+\|g\|_{H^{s}}\right)\left(\|f-g\|_{M_{\beta}}+\|f-g\|_{H^{s}}\right) \label{eq-5.6-k}, \\
        \|f \nabla \Delta^{-1} f-g \nabla \Delta^{-1} g\|_{M_{\beta}} \leq C\left(\|f\|_{H^{s}}+\|f\|_{M_{\beta}}+\|g\|_{H^{s}}+\|g\|_{M_{\beta}}\right)\left(\|f-g\|_{M_{\beta}}+\|f-g\|_{H^{s}}\right). \label{eq-5.8-k}
    \end{gather}    
     \indent Consequently,  \vspace{-0.3cm}
     \begin{gather}
        \|f^{q}-g^{q}\|_{K_{s, \beta}} \leq C\left(\|f\|_{K_{s, \beta}}^{q-1}+\|g\|_{K_{s, \beta}}^{q-1}\right)\|f-g\|_{K_{s, \beta}}, \label{estimativa-4} \\
        \|f \nabla \Delta^{-1} f-g \nabla \Delta^{-1} g\|_{K_{s, \beta}} \leq C\left(\|f\|_{K_{s, \beta}}+\|g\|_{K_{s, \beta}}\right) \|f-g\|_{K_{s, \beta}}. \label{estimativa-5}
    \end{gather}
\end{lemma}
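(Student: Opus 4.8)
The plan is to reduce all four bounds — and hence the consequences \eqref{estimativa-4}--\eqref{estimativa-5}, which follow by adding the $H^{s}$- and $M_{\beta}$-parts and using $\|\cdot\|_{K_{s,\beta}}=\|\cdot\|_{M_{\beta}}+\|\cdot\|_{H^{s}}$ — to two ingredients: the Banach-algebra and embedding properties of $H^{s}$ (since $s>d/2+1$ we have $H^{s}\hookrightarrow W^{1,\infty}$ and $H^{s-1}$ is still an algebra), together with a few mapping properties of $\nabla\Delta^{-1}$ on $K_{s,\beta}$. For the polynomial terms I would write $f^{q}-g^{q}=(f-g)P$ with $P:=\sum_{j=0}^{q-1}f^{j}g^{q-1-j}$; since $H^{s}$ is an algebra, $\|P\|_{H^{s}}\le C\sum_{j}\|f\|_{H^{s}}^{j}\|g\|_{H^{s}}^{q-1-j}\le C(\|f\|_{H^{s}}^{q-1}+\|g\|_{H^{s}}^{q-1})$ by Young's inequality, and $\|P\|_{W^{1,\infty}}\le C\|P\|_{H^{s}}$. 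Then \eqref{eq-5.5-k} is immediate, and \eqref{eq-5.7-k} follows from $\nabla\big((f-g)P\big)=\nabla(f-g)\,P+(f-g)\,\nabla P$ by an $L^{1}$--$L^{\infty}$ pairing, assigning the weighted mass $(1+|x|^{\beta})(|f-g|+|\nabla(f-g)|)$ (whose integral is $\le\|f-g\|_{M_{\beta}}$) to $L^{1}$ and $P,\nabla P$ to $L^{\infty}$.

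Next I would record three facts about $\nabla\Delta^{-1}$. (i) For $d\ge2$ the kernel of $\nabla\Delta^{-1}$ is comparable to $|x-y|^{1-d}$, so splitting the convolution into $\{|x-y|<1\}$ (integrable singularity, paired with $L^{\infty}$) and $\{|x-y|\ge1\}$ (kernel $\le1$, paired with $L^{1}$) gives $\|\nabla\Delta^{-1}h\|_{L^{\infty}}\le C(\|h\|_{L^{1}}+\|h\|_{L^{\infty}})\le C\|h\|_{K_{s,\beta}}$. (ii) Since $\widehat{\nabla^{2}\Delta^{-1}h}(\xi)$ is a bounded multiple of $\widehat h(\xi)$, $\|\nabla^{2}\Delta^{-1}h\|_{L^{\infty}}\le C\|\widehat h\|_{L^{1}}\le C\|h\|_{H^{s}}$ (the last step by Cauchy--Schwarz, as $s>d/2$), and $\|\nabla^{2}\Delta^{-1}h\|_{L^{2}}\le C\|h\|_{L^{2}}$ by Plancherel. (iii) For $|\gamma|\ge1$ the symbol of $\partial^{\gamma}\nabla\Delta^{-1}$ factors as a monomial of degree $|\gamma|-1$ times a bounded $0$-homogeneous multiplier, so $\partial^{\gamma}\nabla\Delta^{-1}=\partial^{\sigma}\circ T$ with $|\sigma|=|\gamma|-1$ and $T$ bounded on integer-order $L^{2}$-Sobolev spaces; in particular $\|\partial^{\gamma}\nabla\Delta^{-1}h\|_{L^{2}}\le C\|h\|_{H^{|\gamma|-1}}$.

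For the aggregation terms I would bilinearize, $f\nabla\Delta^{-1}f-g\nabla\Delta^{-1}g=(f-g)\nabla\Delta^{-1}f+g\,\nabla\Delta^{-1}(f-g)$. For \eqref{eq-5.8-k}, using $\nabla\big[(f-g)\nabla\Delta^{-1}f\big]=\nabla(f-g)\otimes\nabla\Delta^{-1}f+(f-g)\nabla^{2}\Delta^{-1}f$, the weighted $L^{1}$ norm is bounded by $\|f-g\|_{M_{\beta}}\big(\|\nabla\Delta^{-1}f\|_{L^{\infty}}+\|\nabla^{2}\Delta^{-1}f\|_{L^{\infty}}\big)\le C\|f-g\|_{M_{\beta}}(\|f\|_{M_{\beta}}+\|f\|_{H^{s}})$ by (i)--(ii); the second summand contributes $C\|g\|_{M_{\beta}}(\|f-g\|_{M_{\beta}}+\|f-g\|_{H^{s}})$, which is exactly why $\|g\|_{M_{\beta}}$ appears in \eqref{eq-5.8-k} but not in \eqref{eq-5.6-k}. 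For \eqref{eq-5.6-k} I would Leibniz-expand $\partial^{\gamma}\big[(f-g)\nabla\Delta^{-1}f\big]$, $|\gamma|\le s$: the single term carrying all derivatives on $f-g$ is bounded in $L^{2}$ by $\|f-g\|_{H^{s}}\|\nabla\Delta^{-1}f\|_{L^{\infty}}\le C\|f-g\|_{H^{s}}(\|f\|_{M_{\beta}}+\|f\|_{H^{s}})$ (the only place $\|f\|_{M_{\beta}}$ enters), while every other term has $\ge1$ derivative on the nonlocal factor and, by (iii), equals $\partial^{\delta}(f-g)\cdot\partial^{\sigma}(Tf)$ with $|\delta|+|\sigma|\le s-1$, controlled in $L^{2}$ by $C\|f-g\|_{H^{s}}\|f\|_{H^{s}}$ through the product law $H^{a}\cdot H^{b}\hookrightarrow L^{2}$ (valid for $a,b\ge0$, $a+b>d/2$) with $a=s-|\delta|$, $b=s-1-|\sigma|$, so $a+b=2s-|\gamma|\ge s>d/2$; the summand $g\,\nabla\Delta^{-1}(f-g)$ is handled identically and produces only $\|g\|_{H^{s}}$.

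The delicate point is this last step: in the $H^{s}$ estimate of the aggregation term one must organize the Leibniz expansion according to how many derivatives hit $\nabla\Delta^{-1}f$, spend exactly one of them to cancel the $|\xi|^{-1}$ low-frequency factor of the symbol — so that no global $H^{s}$ control of $\nabla\Delta^{-1}f$ (which genuinely fails, e.g. in $d=2$ when $\int f\neq0$) is ever invoked — and then close the remaining bilinear $L^{2}$ estimate, which works precisely because $s-1>d/2$ once that derivative has been used up. By contrast, the polynomial estimates and the weighted $L^{1}$ aggregation estimate are comparatively routine, the one observation needed being that $\nabla\Delta^{-1}$ and $\nabla^{2}\Delta^{-1}$ send $K_{s,\beta}$ into $L^{\infty}$, which lets the spatial weight always be carried by the difference factor.
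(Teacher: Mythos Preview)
Your proposal is correct and follows essentially the same route as the paper: the factorization $f^{q}-g^{q}=(f-g)P$ combined with the algebra and $W^{1,\infty}$-embedding of $H^{s}$ for \eqref{eq-5.5-k}--\eqref{eq-5.7-k}, the bilinearization $(f-g)\nabla\Delta^{-1}f+g\,\nabla\Delta^{-1}(f-g)$ together with the kernel-splitting bound $\|\nabla\Delta^{-1}h\|_{L^{\infty}}\le C(\|h\|_{L^{1}}+\|h\|_{L^{\infty}})$, and the Leibniz split into the single all-on-$(f-g)$ term versus the remaining terms with at least one derivative on the nonlocal factor. The only cosmetic differences are that the paper handles the $I_{2}$ piece of \eqref{eq-5.6-k} by quoting a Kato--Ponce type product estimate rather than your explicit $\partial^{\sigma}\circ T$ rewriting, and for \eqref{eq-5.8-k} it uses the identity $\nabla\cdot\nabla\Delta^{-1}f=f$ to avoid invoking $\|\nabla^{2}\Delta^{-1}h\|_{L^{\infty}}\le C\|\widehat h\|_{L^{1}}$; both variants accomplish the same thing.
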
 
\begin{proof}
   Estimate \eqref{eq-5.5-k} follows from writing $f^{q}-g^{q}=(f-g)\left(f^{q-1}+\cdots+g^{q-1}\right)$ and the fact that $H^{s}$ is an algebra when $s>d / 2$. 
   For \eqref{eq-5.7-k}, a similar approach is adopted, using the Sobolev embedding theorem for $s > d/2+ 1$: $\|\varphi\|_{L^{\infty}}+\|\nabla \varphi\|_{L^{\infty}} \leq C\|\varphi\|_{H^{s}}$, for any $\varphi \in H^{s}$. This leads to
    \begin{equation*}
        \|f^{q}-g^{q}\|_{M_\beta} \leq \left( \|f^{q-1}+\cdots+g^{q-1}\|_{L^{\infty}}+\|\nabla \left(f^{q-1}+\cdots+g^{q-1}\right)\|_{L^{\infty}} \right)\|f-g\|_{M_\beta}.
    \end{equation*}

    To establish \eqref{eq-5.6-k}, we start by writing
    \begin{equation}
        \label{eq-5.6-k-2}
        f \nabla \Delta^{-1} f-g \nabla \Delta^{-1} g=(f-g) \nabla \Delta^{-1} f+g\left(\nabla \Delta^{-1} f-\nabla \Delta^{-1} g\right).
    \end{equation}
    Now,  note that, for any $\varphi \in H^{s}$, by decomposing $\nabla \Delta^{-1} \varphi$ into two parts, we find that
    \begin{equation*}
        \|\nabla \Delta^{-1} \varphi\|_{L^{\infty}} \leq C \displaystyle \underset{\quad x \in \mathbb{R}^d}{\operatorname{ess \sup}} \left(\int_{B_1(x)} \frac{|\varphi(y)|}{|x-y|^{d-1}}\mathrm{~d} y+\int_{\mathbb{R}^d  \setminus  B_1(x)} \frac{|\varphi(y)|}{|x-y|^{d-1}}\mathrm{~d} y\right),
    \end{equation*}
    leading to the estimate 
    \begin{equation}
        \label{eq-Lemma-5.2-k}
        \|\nabla \Delta^{-1} \varphi\|_{L^{\infty}} \leq C\left(\|\varphi\|_{L^{\infty}}+\|\varphi\|_{L^1}\right),
    \end{equation}
    which further implies
    \begin{equation}
        \label{eq-Lemma-5.2-k-2}
        \|\nabla \Delta^{-1} \varphi \|_{L^{\infty}} \leq C \left( \|\varphi\|_{H^s} +\|\varphi\|_{M_{\beta}}\right) .
    \end{equation}

    Next, since $s$ is an integer,  using  $\|\varphi\|_{H^s}=\left(\sum_{|\zeta| \leq s}\|\partial^\zeta \varphi\|_2^2\right)^{1 / 2}$, we   have for $\varphi$, $\psi \in H^{s}$
   \begin{equation*}
        \|\psi \nabla \Delta^{-1} \varphi \|_{H^s}  \leq  \sum_{|\zeta| \leq s}   \|\partial^\zeta \psi \nabla \Delta^{-1} \varphi\|_{L^2} 
        +\sum_{\substack{|\zeta+\gamma| \leq s \\ |\gamma|=1}}\|\partial^\zeta \left(\psi \partial^{\gamma}\left(\nabla \Delta^{-1} \varphi\right) \right)\|_{L^2}=I_1+I_2.
    \end{equation*} 
    Using \eqref{eq-Lemma-5.2-k-2}, we estimate $I_1$:
    \begin{equation*}
        I_1 \leq \sum_{|\zeta| \leq s}  \|\partial^\zeta \psi\|_{L^2} \| \nabla \Delta^{-1} \varphi \|_{L^\infty}
        \leq  \sum_{|\zeta| \leq  s} C \|\partial^\zeta \psi\|_{L^2} \left( \|\varphi\|_{H^s} +\|\varphi\|_{M_{\beta}}\right).
    \end{equation*}
    By the Sobolev embedding theorem and \citet[Lemma A.1]{INEQUALITY}, we have $I_2 \leq C\|\psi \|_{H^s} \| \varphi \|_{H^s}$. Thus, we obtain 
   \begin{equation}
        \label{eq-Lemma-5.2-k-3}
        \|\psi \nabla \Delta^{-1} \varphi \|_{H^s} \leq   C\left( \|\varphi\|_{H^s} +\|\varphi\|_{M_{\beta}}\right)\|\psi\|_{H^s}.
    \end{equation}
    Then, from \eqref{eq-Lemma-5.2-k-3} and \eqref{eq-5.6-k-2}, we derive %
    \begin{equation*}
        \begin{split}
            \|f \nabla \Delta^{-1} f-g \nabla \Delta^{-1} g\|_{H^{s}} 
            & \leq C \left( \|f\|_{H^{s}} +\|f\|_{M_{\beta}}\right) \|f-g\|_{H^{s}}+C\left( \|f-g\|_{H^{s}} +\|f-g\|_{M_{\beta}}\right) \|g\|_{H^{s}}\\
            & \leq C\left(\|f\|_{H^{s}}+\|f\|_{M_{\beta}}+\|g\|_{H^{s}}\right)\left(\|f-g\|_{M_{\beta}}+\|f-g\|_{H^{s}}\right). 
        \end{split}
    \end{equation*}

    For \eqref{eq-5.8-k}, we observe that
    \begin{equation}
        \label{eq-5.6-k-3}
        \begin{split}
            \nabla \cdot\left(f \nabla \Delta^{-1} f-g \nabla \Delta^{-1} g\right) & =\left(\nabla f \cdot \nabla \Delta^{-1} f-\nabla g \cdot \nabla \Delta^{-1} g\right)+\left(f^{2}-g^{2}\right)\\
            &=\nabla (f-g) \cdot \nabla \Delta^{-1} f+\nabla g \cdot \nabla \Delta^{-1} \left(f- g\right)+\left(f-g\right)\left(f+g\right).
        \end{split}
    \end{equation}

     Then, by summing \eqref{eq-5.6-k-2} and \eqref{eq-5.6-k-3}, integrating it against $\left(1+|x|^{\beta}\right)$, and applying estimate \eqref{eq-Lemma-5.2-k-2} along with the Sobolev embedding theorem, we see that \eqref{eq-5.8-k} does not exceed 
     \begin{equation*}
         \begin{split}
             & \|f-g\|_{M_{\beta}}\|\nabla \Delta^{-1} f\|_{L^{\infty}}+\|g\|_{M_{\beta}}\|\nabla \Delta^{-1}(f-g)\|_{L^{\infty}} + \|f-g\|_{M_{\beta}} \|f+g\|_{L^{\infty}}  \\
             & \quad  \leq C\left(\|f\|_{H^{s}}+\|f\|_{M_{\beta}}+\|g\|_{H^{s}}+\|g\|_{M_{\beta}}\right)\left(\|f-g\|_{M_{\beta}}+\|f-g\|_{H^{s}}\right). 
         \end{split}
     \end{equation*}

   Finally, note that \eqref{estimativa-4} and \eqref{estimativa-5} naturally follow from the definition of the norm $\|\, \cdot \,\|_{K_{s, \beta}}$ and the previous computations.
\end{proof}

\section{Global Existence of smooth solutions} \label{sec:Global-Existence}

To establish the global existence of smooth solutions, our initial step involves the construction of a local solution in the Banach space $X_{s, \beta}^T \equiv C\left(K_{s, \beta},[0, T]\right)$, where $T>0$ is chosen sufficiently small. To this purpose, from \eqref{eq-5.4-k}, let us denote
\begin{equation*}
    B_t(\rho) \equiv \int_0^t K^{\alpha}_{t-r} *\left(-\nabla \cdot(u \rho)-\epsilon \rho^q+\chi \nabla \cdot\left(\rho \nabla \Delta^{-1} \rho\right)\right) \mathrm{d} r.
\end{equation*}
\begin{lemma} \label{Lemma-5.3-k}
    Assume $u \in C^{\infty}(\mathbb{R}^d \times[0, \infty))$ satisfies $\nabla \cdot u=0$. Let $s$ and $q$ be positive integers, where $s>d/2+1$, and let $\alpha$ and $\beta$ be such that $\alpha \in (1,2]$, and $0 \leq \beta < \alpha$. Then, for any $f, g \in X_{s, \beta}^T$, we have 
    \begin{equation*}
        \|B_T(f)-B_T(g)\|_{X_{s, \beta}^T} \leq  \Theta \|f-g\|_{X_{s, \beta}^T},
    \end{equation*}
    where, for $T \leq 1$, we have
    \begin{equation*}
        \Theta  \leq C(d, q, \beta , \epsilon, \chi) \max _{0 \leq t \leq T}  \left(\|u(\cdot, t)\|_{C^s}+\|f(\cdot, t)\|_{K_{s, \beta}}^{q-1}+  \|g(\cdot, t)\|_{K_{s, \beta}}^{q-1} + \|f(\cdot, t)\|_{K_{s, \beta}}+\|g(\cdot, t)\|_{K_{s, \beta}}\right) T^{\frac{\alpha-1}{\alpha}}. 
    \end{equation*}
\end{lemma}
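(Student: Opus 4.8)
The plan is to write $B_T(f)-B_T(g)$ as a single Duhamel integral of the difference of the three nonlinear terms, estimate each contribution separately in the $K_{s,\beta}$ norm uniformly in $t\in[0,T]$, and in each case move any divergence off the nonlinearity and onto the kernel, so that the smoothing estimates of \Cref{Lemma-5.1-k} can be combined with the nonlinear estimates of \Cref{Lemma-5.2-k}. Explicitly,
\[
B_T(f)-B_T(g)=\int_0^T K^{\alpha}_{T-r}*\Bigl(-\nabla\cdot\bigl(u(f-g)\bigr)-\epsilon\bigl(f^q-g^q\bigr)+\chi\,\nabla\cdot\bigl(f\nabla\Delta^{-1}f-g\nabla\Delta^{-1}g\bigr)\Bigr)\,\mathrm{d}r.
\]

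For the advection term I would pass the divergence to the kernel, $K^{\alpha}_{T-r}*\nabla\cdot\bigl(u(f-g)\bigr)=\nabla K^{\alpha}_{T-r}*\bigl(u(f-g)\bigr)$ (componentwise), and apply the second bound in \eqref{estimativa-3-k}, yielding a factor $C\bigl((T-r)^{-1/\alpha}+(T-r)^{(\beta-1)/\alpha}\bigr)$ times $\|u(\cdot,r)(f-g)(\cdot,r)\|_{K_{s,\beta}}$. It then remains to establish the product estimate $\|u\varphi\|_{K_{s,\beta}}\le C\|u\|_{\mathcal{C}^s}\|\varphi\|_{K_{s,\beta}}$: the $H^s$ part follows from the Leibniz rule and the fact that $H^s$ is an algebra for $s>d/2$, while for the $M_\beta$ part one only uses that $u$ and $\nabla u$ are uniformly bounded (by $\|u\|_{\mathcal{C}^s}$), so that multiplication by $u$ leaves the weight $\bigl(1+|x|^\beta\bigr)$ untouched. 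The divergence-free hypothesis on $u$ is available here but is not needed for this step.

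For the reaction term there is no divergence, so the first bound in \eqref{estimativa-3-k} contributes only $C\bigl(1+(T-r)^{\beta/\alpha}\bigr)$, and \eqref{estimativa-4} gives $\|f^q-g^q\|_{K_{s,\beta}}\le C\bigl(\|f\|_{K_{s,\beta}}^{q-1}+\|g\|_{K_{s,\beta}}^{q-1}\bigr)\|f-g\|_{K_{s,\beta}}$. For the chemotaxis term I would again pass the divergence to the kernel to produce $\nabla K^{\alpha}_{T-r}*\bigl(f\nabla\Delta^{-1}f-g\nabla\Delta^{-1}g\bigr)$, bound the kernel factor by $C\bigl((T-r)^{-1/\alpha}+(T-r)^{(\beta-1)/\alpha}\bigr)$ as above, and use \eqref{estimativa-5}, $\|f\nabla\Delta^{-1}f-g\nabla\Delta^{-1}g\|_{K_{s,\beta}}\le C\bigl(\|f\|_{K_{s,\beta}}+\|g\|_{K_{s,\beta}}\bigr)\|f-g\|_{K_{s,\beta}}$.

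To finish, replace the $K_{s,\beta}$ norms of $f$, $g$, $u$ inside the integrals by their maxima over $[0,T]$ and integrate the time weights: since $\alpha>1$, $\int_0^T(T-r)^{-1/\alpha}\,\mathrm{d}r=\tfrac{\alpha}{\alpha-1}T^{(\alpha-1)/\alpha}<\infty$; since $\beta\ge0$ and $\alpha>1$, $(\beta-1)/\alpha>-1$, so $\int_0^T(T-r)^{(\beta-1)/\alpha}\,\mathrm{d}r=\tfrac{\alpha}{\alpha-1+\beta}T^{(\alpha-1+\beta)/\alpha}<\infty$; and $\int_0^T\bigl(1+(T-r)^{\beta/\alpha}\bigr)\,\mathrm{d}r\le CT$. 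For $T\le1$ the exponents $(\alpha-1)/\alpha$, $(\alpha-1+\beta)/\alpha$, $1$ all dominate $(\alpha-1)/\alpha$, so every time integral is $\le CT^{(\alpha-1)/\alpha}$; collecting the three bounds gives $\Theta$ with the stated dependence on $d,q,\beta,\epsilon,\chi$. The main obstacle — and the reason the statement requires $\alpha\in(1,2]$ rather than $(0,2]$ — is exactly the $r$-integrability of the singular factor $(T-r)^{-1/\alpha}$ produced by differentiating the kernel in the advection and chemotaxis terms, which forces $1/\alpha<1$; the only other point requiring a little care is the weighted product estimate for $u\varphi$, which is nonetheless straightforward since $u$ and all its derivatives are uniformly bounded.
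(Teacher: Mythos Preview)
Your proposal is correct and follows essentially the same approach as the paper: write $B_t(f)-B_t(g)$ as a single Duhamel integral of the difference, apply the kernel bounds of \Cref{Lemma-5.1-k} (moving each divergence onto $K^{\alpha}_{t-r}$) together with the nonlinear estimates of \Cref{Lemma-5.2-k}, integrate the resulting time weights using $\alpha>1$, and then drop higher powers of $T$ when $T\le1$. Your additional remarks on the product estimate $\|u\varphi\|_{K_{s,\beta}}\le C\|u\|_{\mathcal{C}^s}\|\varphi\|_{K_{s,\beta}}$ and on the role of the condition $\alpha>1$ make explicit steps the paper leaves implicit, but the structure of the argument is the same.
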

\begin{proof}
    Consider
    \begin{equation*}
        B_t(f)-B_t(g)=\int_0^t K^{\alpha}_{t-r} *\left(\nabla(u(f-g))-\epsilon\left(f^q-g^q\right)+\chi \nabla \cdot \left(f \nabla \Delta^{-1} f-g \nabla \Delta^{-1} g\right)\right) \mathrm{d} r .
    \end{equation*}

    From \Cref{Lemma-5.1-k,Lemma-5.2-k},  and the fact that  $\alpha > 1$, we obtain 
    \begin{equation*}
        \begin{split}
            \|B_t(f)-B_t(g)\|_{K_{s, \beta}} & \leq C \int_0^t\left[\left((t-r)^{-1 / \alpha}+(t-r)^{(\beta-1) / \alpha}\right)\left(\|u\|_{C^s}+\|f\|_{K_{s, \beta}}+\|g\|_{K_{s, \beta}}\right)\right. \\
            & \hspace{1.8cm} \left.+\left(1+(t-r)^{\beta / \alpha}\right)\left(\|f\|_{K_{s, \beta}}^{q-1}+\|g\|_{K_{s, \beta}}^{q-1}\right)\right]\|f-g\|_{K_{s, \beta}} \mathrm{d} r \\
            & \leq C\left[\left(t^{\frac{\alpha-1}{\alpha}}+t^{\frac{\alpha-1+\beta}{\alpha}}\right) \max _{0 \leq r \leq t}\left(\|u\|_{C^s}+\|f\|_{K_{s, \beta}}+\|g\|_{K_{s, \beta}}\right)\right. \\
            & \hspace{1.8cm} \left.+\left(t+t^{\frac{\beta+\alpha}{\alpha}}\right) \max _{0 \leq r \leq t}\left(\|f\|_{K_{s, \beta}}^{q-1}+\|g\|_{K_{s, \beta}}^{q-1}\right)\right] \max _{0 \leq r \leq t}\|f-g\|_{K_{s, \beta}},
        \end{split}        
    \end{equation*}
    and considering $T \leq 1$ we can neglect the higher powers of $T$, yielding the estimate for $\Theta$.
\end{proof}

In a standard way, the existence of a local solution is implied by \Cref{Lemma-5.3-k} through the contraction mapping principle. This leads us to formulate the following theorem. 

\begin{theorem} \label{Theorem-5.4-k}
    Consider positive integers $q$ and $s$ such that $s>d/2+1$. Let $\alpha \in (1,2]$, $0 \leq  \beta  < \alpha$, and assume $u \in C^{\infty}\left(\mathbb{R}^{d} \times [0, \infty)\right)$ with $\nabla \cdot u=0$. Additionally, suppose $\rho_{0} \in K_{s, \beta}$. Then, there exists $T=T\left(q, d, u, s, \epsilon, \chi, \alpha, \|\rho_{0}\|_{K_{s, \beta}}\right)$ such that equation \eqref{eq-4.1-k} has a unique mild solution $\rho \in X_{s,  \beta }^{T}$ satisfying $\rho(x, 0)=\rho_{0}(x)$.
\end{theorem}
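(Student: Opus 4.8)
The plan is to prove \Cref{Theorem-5.4-k} via the Banach fixed point theorem applied to the Duhamel map
\[
    \mathcal{T}(\rho)(x,t) \equiv K^{\alpha}_t * \rho_0(x) + B_t(\rho),
\]
acting on a closed ball of the Banach space $X_{s,\beta}^T = C\big(K_{s,\beta},[0,T]\big)$. First I would verify that $\mathcal{T}$ maps a suitable ball into itself. For the free term, \eqref{estimativa-3-k} of \Cref{Lemma-5.1-k} gives $\|K^{\alpha}_t * \rho_0\|_{K_{s,\beta}} \leq C(1+t^{\beta/\alpha})\|\rho_0\|_{K_{s,\beta}} \leq 2C\|\rho_0\|_{K_{s,\beta}}$ for $T \leq 1$; in particular $\sup_{0\leq t\leq T}\|K^{\alpha}_t*\rho_0\|_{K_{s,\beta}} \leq R_0 := 2C\|\rho_0\|_{K_{s,\beta}}$. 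For the nonlinear term, applying \Cref{Lemma-5.3-k} with $g = 0$ (note $B_t(0)=0$) yields
\[
    \|B_T(f)\|_{X_{s,\beta}^T} \leq C(d,q,\beta,\epsilon,\chi)\Big(\|u\|_{\mathcal{C}^s} + \|f\|_{X_{s,\beta}^T}^{q-1} + \|f\|_{X_{s,\beta}^T}\Big)\,\|f\|_{X_{s,\beta}^T}\, T^{\frac{\alpha-1}{\alpha}}.
\]
Choosing $R = 2R_0$ and then $T$ small enough — depending on $q,d,\|u\|_{\mathcal{C}^s},s,\epsilon,\chi,\alpha$, and $\|\rho_0\|_{K_{s,\beta}}$ through $R$ — so that $C(\dots)(\|u\|_{\mathcal{C}^s} + R^{q-1} + R)\,T^{\frac{\alpha-1}{\alpha}} \leq \tfrac12$, we get $\|\mathcal{T}(f)\|_{X_{s,\beta}^T} \leq R_0 + \tfrac12 R = R$, so the ball $\bar B_R(0)$ is invariant.

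Next I would establish that $\mathcal{T}$ is a contraction on $\bar B_R(0)$. Since the free term cancels in the difference $\mathcal{T}(f) - \mathcal{T}(g) = B_T(f) - B_T(g)$, \Cref{Lemma-5.3-k} gives directly
\[
    \|\mathcal{T}(f) - \mathcal{T}(g)\|_{X_{s,\beta}^T} \leq \Theta\,\|f-g\|_{X_{s,\beta}^T}, \qquad \Theta \leq C(d,q,\beta,\epsilon,\chi)\big(\|u\|_{\mathcal{C}^s} + 2R^{q-1} + 2R\big)\,T^{\frac{\alpha-1}{\alpha}},
\]
for $f,g \in \bar B_R(0)$ and $T \leq 1$. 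Shrinking $T$ further if necessary so that $\Theta \leq \tfrac12 < 1$ — this is the same smallness condition as above, possibly with an adjusted constant — makes $\mathcal{T}$ a strict contraction. The crucial structural point enabling all of this is the positive power $T^{\frac{\alpha-1}{\alpha}}$ coming out of the time integral: the kernel estimate \eqref{estimativa-k-lemma-inequality-2005} produces the singular factors $(t-r)^{-1/\alpha}$ and $(t-r)^{(\beta-1)/\alpha}$, which are integrable near $r=t$ precisely because $\alpha > 1$ and $\beta < \alpha$, and integrating them gains a strictly positive power of $t$. This is exactly where the hypothesis $\alpha \in (1,2]$ is used, and I anticipate that the main (though routine) technical care lies in bookkeeping these exponents and confirming integrability — the genuinely delicate inequalities, namely the bilinear and nonlinear estimates for the chemotactic and reaction terms, have already been absorbed into \Cref{Lemma-5.1-k,Lemma-5.2-k,Lemma-5.3-k}.

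Having a unique fixed point $\rho \in \bar B_R(0) \subset X_{s,\beta}^T$, the Banach fixed point theorem yields the unique mild solution of \eqref{eq-5.4-k}, hence of \eqref{eq-4.1-k}, on $[0,T]$. Uniqueness in the full space $X_{s,\beta}^T$ (not merely in the ball) follows by a standard argument: any two mild solutions with the same data coincide on a short initial interval by the contraction estimate applied on $[0,\tau]$ with $\tau$ small, and then a continuation/connectedness argument over $[0,T]$ extends the coincidence to all of $[0,T]$. Finally, that $\rho(x,0) = \rho_0(x)$ is immediate from the Duhamel formula since $B_0(\rho) = 0$ and $K^{\alpha}_0 * \rho_0 = \rho_0$ (the latter understood as the strong $K_{s,\beta}$-limit as $t \to 0^+$, using \eqref{estimativa-2-k} together with an approximation argument). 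I would also remark that the dependence of $T$ on the listed parameters is exactly as asserted, since $R$ depends only on $\|\rho_0\|_{K_{s,\beta}}$ and the smallness threshold for $T$ depends on $q,d,\|u\|_{\mathcal{C}^s},s,\epsilon,\chi,\alpha$ and $R$.
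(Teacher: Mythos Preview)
Your proposal is correct and follows essentially the same approach as the paper, which simply states that the existence of a local solution is implied by \Cref{Lemma-5.3-k} through the contraction mapping principle, without providing the details you have spelled out. Your explicit construction of the invariant ball, the contraction estimate, and the uniqueness extension are the standard steps the paper alludes to.
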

\begin{remark}
    Higher regularity of the solution in space and time -- implying $\rho \in C\left(H^{m},(0, T]\right)$ for every $m>0$ -- follows from \Cref{Theorem-5.4-k} and parabolic regularity estimates applied iteratively \citep{New-developments-nonlocal-operators-I,Regularity-solutions-the-parabolic-fractional-obstacle}. 
\end{remark}
\begin{remark}
    Note that, from \Cref{Lemma-5.3-k},  the value of $\Theta$ depends on the $H^{s}$ and $M_{\beta}$ norms of functions in $X_{\beta, s}^{T}$. Therefore, under the conditions of  \Cref{Theorem-5.4-k}, if there is a control over the growth of the $H^{s}$ and $M_{\beta}$ norms of the solution, the time existence of the solution can be extended through iterative applications of local results. Hence, to prove global existence of smooth solutions, we can establish a global a priori estimate on $\|\rho(\cdot, t)\|_{H^{s}}$ and $\|\rho(\cdot, t)\|_{M_{\beta}}$, and then the local solution can be extended globally to $X_{\beta, s}^{T}$ with arbitrary $T$.
\end{remark}

To prove the bounds on $M_{\beta}$ and $H^{s}$ norms of the solution, we first establish control of the $L^{\infty}$ norm. 

\begin{lemma} \label{Lemma-5.6-k}
    Assume that $\rho$ is the local mild solution guaranteed by \Cref{Theorem-5.4-k}, and  $q>2$. Then, 
    \begin{equation}
        \label{eq-5.12-k}
        \|\rho(\cdot, t)\|_{L^{\infty}} \leq N_0 \equiv \max \left((\chi / \epsilon)^{\frac{1}{q-2}},\|\rho_0\|_{L^{\infty}}\right)
    \end{equation}
    for all $0 \leq t \leq T$.
\end{lemma}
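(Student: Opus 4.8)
The plan is to prove the $L^\infty$ bound \eqref{eq-5.12-k} via a maximum-principle style argument adapted to the fractional Laplacian, exploiting that on the level set where $\rho$ attains a large maximum the reaction term $-\epsilon\rho^q$ dominates the chemotactic term $\chi\nabla\cdot(\rho\nabla\Delta^{-1}\rho)$. First I would note that since the local solution lies in $X^T_{s,\beta}$ with $s>d/2+1$ and parabolic regularity upgrades it to a classical (smooth, spatially decaying) solution on $(0,T]$, the function $t\mapsto \|\rho(\cdot,t)\|_{L^\infty}$ is attained at some point $x_t\in\mathbb{R}^d$ (using that $\rho(\cdot,t)\in\mathcal{S}$-like decay, guaranteed by membership in $M_\beta$ together with higher Sobolev regularity), and that $\rho\geq 0$ by the nonnegativity result referenced in \Cref{Plan-of-the-work}. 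I would then set $M(t)=\|\rho(\cdot,t)\|_{L^\infty}$ and argue that $M$ is Lipschitz, so differentiable a.e., with $\frac{d}{dt}M(t) = \partial_t\rho(x_t,t)$ at a.e.\ $t$ where the sup is attained.

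The core computation evaluates the equation at the maximum point $x_t$. At such a point $\nabla\rho(x_t,t)=0$, so $u\cdot\nabla\rho(x_t,t)=0$ and the chemotactic divergence term simplifies: $\nabla\cdot(\rho\nabla\Delta^{-1}\rho) = \nabla\rho\cdot\nabla\Delta^{-1}\rho + \rho\,\Delta\Delta^{-1}\rho = \nabla\rho\cdot\nabla\Delta^{-1}\rho - \rho^2$, so at $x_t$ this equals $-\rho(x_t,t)^2 = -M(t)^2$ (using $-\Delta\Delta^{-1} = \mathrm{Id}$, i.e.\ $\Delta\Delta^{-1}\rho = -\rho$ with the sign convention of \eqref{eq-4.1-k}; I must be careful with this sign — the $+\chi\nabla\cdot(\rho\nabla\Delta^{-1}\rho)$ term contributes $-\chi M(t)^2$, which is \emph{helpful}, pushing the maximum down). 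Next, the dissipative term: at a maximum point, $\Lambda^\alpha\rho(x_t,t)\geq 0$ by the integral representation \eqref{eq-definition-fractional-Laplacian-integral}, since $\rho(x_t,t)-\rho(y,t)\geq 0$ for all $y$, hence $-\Lambda^\alpha\rho(x_t,t)\leq 0$. Collecting terms,
\begin{equation*}
    \frac{d}{dt}M(t) \leq -\Lambda^\alpha\rho(x_t,t) - \chi M(t)^2 - \epsilon M(t)^q \leq -\chi M(t)^2 - \epsilon M(t)^q \leq 0
\end{equation*}
whenever $M(t)>0$, which already gives $M(t)\leq \|\rho_0\|_{L^\infty}$ — in fact a stronger statement than claimed. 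Actually I suspect the intended statement retains the chemotaxis term with the opposite sign in mind (or the author wants a bound uniform in $\chi$ in case the sign works the other way), so the safe route is to discard the nonpositive terms $-\Lambda^\alpha\rho(x_t,t)$ and $-\chi M(t)^2$ and conclude $\frac{d}{dt}M(t)\leq -\epsilon M(t)^q\leq 0$, hence $M(t)\leq \max(M(0),\,\ast)\leq N_0$ trivially; more conservatively, if one does not wish to exploit the sign of the chemotactic term and instead only that $|\nabla\cdot(\rho\nabla\Delta^{-1}\rho)(x_t)|=\rho(x_t)^2=M(t)^2$ at the maximum, then $\frac{d}{dt}M(t)\leq \chi M(t)^2 - \epsilon M(t)^q = M(t)^2(\chi - \epsilon M(t)^{q-2})$, which is nonpositive precisely when $M(t)\geq (\chi/\epsilon)^{1/(q-2)}$, giving the invariant region $M(t)\leq\max\big((\chi/\epsilon)^{1/(q-2)},\|\rho_0\|_{L^\infty}\big)=N_0$.

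To turn the differential inequality into the stated bound rigorously I would use a barrier/continuity argument: suppose for contradiction that $M(t_0)>N_0$ for some $t_0\in(0,T]$; let $t_1=\sup\{t<t_0: M(t)\leq N_0\}$ (so $M(t_1)=N_0$ by continuity, and $M(t)>N_0\geq(\chi/\epsilon)^{1/(q-2)}$ on $(t_1,t_0]$), and on that interval $\frac{d}{dt}M(t)\leq M(t)^2(\chi-\epsilon M(t)^{q-2})\leq 0$, forcing $M(t_0)\leq M(t_1)=N_0$, a contradiction. The main obstacle — and the step requiring the most care — is the justification that $M(t)$ is differentiable (a.e.) with $M'(t)=\partial_t\rho(x_t,t)$ and that the maximum is actually attained: this needs the spatial decay of $\rho(\cdot,t)$ (from $\rho\in M_\beta$ plus smoothing) to rule out the sup escaping to infinity, and a Rademacher/Danskin-type lemma for the envelope of a smooth family; alternatively one can bypass this entirely by working with the mild formulation \eqref{eq-5.4-k} and a Grönwall/bootstrap estimate on $\|\rho(\cdot,t)\|_{L^\infty}$ using \Cref{integral-k}\ref{estimate-k(b)} and \eqref{eq-Lemma-5.2-k}, though that route gives a bound that grows in $t$ unless one feeds in the reaction damping carefully. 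I would present the maximum-principle argument as the clean proof and remark that the nonnegativity of $\rho$ and its Schwartz-type decay (already established) are what make evaluation at $x_t$ legitimate.
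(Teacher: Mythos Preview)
Your approach is the same maximum-principle argument as the paper's, but you have a genuine sign error in the chemotactic term. With the convention of \eqref{eq-4.1-k}, at a spatial maximum $\nabla\cdot(\rho\nabla(-\Delta)^{-1}\rho)=\nabla\rho\cdot\nabla(-\Delta)^{-1}\rho+\rho\,\Delta(-\Delta)^{-1}\rho=-\rho^2$ (since $\Delta(-\Delta)^{-1}=-\mathrm{Id}$ and $\nabla\rho=0$), so the term $-\chi\nabla\cdot(\rho\nabla(-\Delta)^{-1}\rho)$ contributes $+\chi\rho(x_t,t)^2$, not $-\chi M(t)^2$. Your first displayed inequality and the conclusion $M(t)\leq\|\rho_0\|_{L^\infty}$ are therefore false in general: the chemotaxis genuinely pushes the maximum up and competes with the reaction, which is exactly why the bound \eqref{eq-5.12-k} carries the extra $(\chi/\epsilon)^{1/(q-2)}$. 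Your ``conservative'' route does arrive at the correct inequality $\partial_t\rho(x_t,t)\leq M(t)^2(\chi-\epsilon M(t)^{q-2})$, but it should be the direct computation, not a fallback.

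Two smaller points where the paper is tighter. First, it sidesteps the Rademacher/Danskin issue you raise by using a first-time-hitting argument: assume some $N_1>N_0$ is reached for the first time at $t_1$, locate $x_0$ with $\rho(x_0,t_1)=N_1$, and observe that the inequality above forces $\partial_t\rho(x_0,t_1)<0$, contradicting the choice of $t_1$. Second, for the attainment of the supremum at a finite point, the paper gives a concrete argument: if a maximizing sequence $x_k\to\infty$, a Poincar\'e inequality on disjoint unit balls $B_1(x_k)$ together with $\rho(\cdot,t_1)\in H^s$ forces the averages $\bar\rho_k\to N_1$, which contradicts $\rho(\cdot,t_1)\in M_\beta$.
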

\begin{proof}
For the sake of completeness, we present the same proof found in \cite[Lemma 5.6]{Kiselev-Biomixing}, this time addressing the fractional Laplacian.
    
Suppose that the statement is false and that there exist $N_1>N_0$ and $0<t_1 \leq T$ such that $\left\|\rho\left(x,t_1\right)\right\|_{L^{\infty}}=N_1$ is attained for the first  time, \ie $\left|\rho\left(x, t\right)\right| \leq\left.N_1\right.$ holds for every $x$ and $0 \leq t \leq t_1$.

In this case, there exists $x_0$ such that $\rho\left(x_0, t_1\right)=N_1$. Indeed, let $x_k$ be a sequence such that $\rho\left(x_k, t_1\right) \rightarrow N_1$ as $k \rightarrow \infty$. If $x_k$ has finite accumulation points, one of them can be labeled $x_0$ and, by continuity, $\rho\left(x_0, t_1\right)=N_1$.
Otherwise, if $x_k \rightarrow \infty$, we can consider a subsequence and assume the unit balls around $x_k$, $B_1\left(x_k\right)$, are disjoint; then, using a version of Poincare inequality (e.g., \cite{Poincare-inequality}), we see that $\|\rho-\bar{\rho}\|_{L^{\infty}\left(B_1\left(x_k\right)\right)}^2 \leq C\|\rho\|_{H^s\left(B_1\left(x_k\right)\right)}^2$ and, since $\sum_k\|\rho\|_{H^s\left(B_1\left(x_k\right)\right)}^2 \leq C\left(t_1\right)<\infty$,
$$
\bar{\rho}_k \equiv \frac{1}{\left|B_1\left(x_k\right)\right|} \int_{B_1\left(x_k\right)} \rho d x \xrightarrow{k \rightarrow \infty} N_1,
$$
which is impossible as $\int_{\mathbb{R}^d}|\rho(x)|\left(1+|x|^{\beta}\right) d x \leq C\left(t_1\right)$.

Then, considering $x_0$ such that $\rho\left(x_0, t_1\right)=N_1$ is a maximum (the case of a minimum follows an analogous procedure), we obtain 
\begin{equation*}
    \Lambda^\alpha \rho(x_0, t_1)  = c_{d,\alpha} P . V . \int _{\mathbb{R}^d} \frac{\rho(x_0, t_1)-\rho(y, t_1) }{|x_0-y|^{\alpha+d}} \mathrm{~d} y \geq 0 \; (\leq 0) \quad \text{ for all } y \in \mathbb{R}^d,
\end{equation*}
implying
\begin{equation*}
    \begin{aligned}
    \left.\partial_t \rho\left(x_0, t\right)\right|_{t=t_1}= & (u \cdot \nabla) \rho\left(x_0, t_1\right)-\Lambda^\alpha \rho\left(x_0, t_1\right)+\chi \nabla \rho\left(x_0, t_1\right) \cdot \nabla \Delta^{-1} \rho\left(x_0, t_1\right) \\
    & +\chi \rho\left(x_0, t_1\right)^2-\epsilon \rho\left(x_0, t_1\right)^q \leq \rho\left(x_0, t_1\right)^2\left(\chi-\epsilon \rho\left(x_0, t_1\right)^{q-2}\right).
    \end{aligned}
\end{equation*}
Therefore, by the assumption on $N_1$, we have $\partial_t \rho\left(x_0, t_1\right)<0$, which contradicts our choice of $t_1$.
\end{proof}

Next, we prove an upper bound on the growth of the $M_{\beta}$ norm of the solution.

\begin{lemma} \label{lemma-5.7-k}
    Assume that $\rho$ is the local mild solution guaranteed by \Cref{Theorem-5.4-k}. Then, on the interval of existence, we have the following bound for the growth of the $M_{\beta}$ norm of the solution 
    \begin{multline}
        \label{eq-5.13-k}
        \|\rho(\cdot, t)\|_{M_{\beta}} \leq C\left(1+t^{\frac{\beta}{\alpha}}\right)\|\rho_0\|_{M_{\beta}} 
        \exp \left(C \int_{0}^{t} \left[\epsilon\left(1+\left(t-r\right)^{\frac{\beta}{\alpha}}\right)\|\rho(\cdot, r)\|_{L^{\infty}}^{q-1} \right. \right.\\
        \left. \textcolor{white}{\int_{0}^{t}}  \left.+\left(\left(t-r\right)^{-\frac{1}{\alpha}}+\left(t-r\right)^{\frac{\beta-1}{\alpha}}\right)\left(\|u(\cdot, r)\|_{C^1}+\|\rho(\cdot, r)\|_{L^{\infty}}+\|\rho(\cdot, r)\|_{L^1}\right)\right] \mathrm{d} r\right).
    \end{multline}
\end{lemma}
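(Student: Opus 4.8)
The plan is to apply the Duhamel representation \eqref{eq-5.4-k}, take the $M_{\beta}$ norm of both sides, and reduce the statement to a (weakly singular) Gronwall inequality for $M(t):=\|\rho(\cdot,t)\|_{M_{\beta}}$. For the free evolution term the first inequality in \eqref{estimativa-1-k} of \Cref{Lemma-5.1-k} gives $\|K^{\alpha}_{t}*\rho_0\|_{M_{\beta}}\le C(1+t^{\beta/\alpha})\|\rho_0\|_{M_{\beta}}$, which is the prefactor in \eqref{eq-5.13-k}; it then remains to estimate the Duhamel term $B_t(\rho)$, which I would split into the advection part $-\nabla\cdot(u\rho)$, the reaction part $-\epsilon\rho^{q}$, and the chemotaxis part $\chi\nabla\cdot(\rho\nabla\Delta^{-1}\rho)$.

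For the reaction term, apply the first inequality in \eqref{estimativa-1-k} with no derivative on the kernel, $\|K^{\alpha}_{t-r}*\rho^{q}\|_{M_{\beta}}\le C(1+(t-r)^{\beta/\alpha})\|\rho^{q}\|_{M_{\beta}}$, and then use the Leibniz rule, $|\rho|^{q}+q|\rho|^{q-1}|\nabla\rho|\le q\|\rho\|_{L^{\infty}}^{q-1}(|\rho|+|\nabla\rho|)$, to get $\|\rho^{q}\|_{M_{\beta}}\le C\|\rho\|_{L^{\infty}}^{q-1}\|\rho\|_{M_{\beta}}$; this produces the $\epsilon(1+(t-r)^{\beta/\alpha})\|\rho(\cdot,r)\|_{L^{\infty}}^{q-1}$ contribution. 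For the advection and chemotaxis terms, both in divergence form, transfer one derivative onto the kernel, $K^{\alpha}_{t-r}*\nabla\cdot(\,\cdot\,)=\nabla K^{\alpha}_{t-r}*(\,\cdot\,)$, and use the second inequality in \eqref{estimativa-1-k}. For advection, $\nabla\cdot(u\rho)=u\cdot\nabla\rho$ by incompressibility, and $\|u\rho\|_{M_{\beta}}\le C\|u(\cdot,r)\|_{C^{1}}\|\rho(\cdot,r)\|_{M_{\beta}}$, giving a contribution bounded by $C\big((t-r)^{-1/\alpha}+(t-r)^{(\beta-1)/\alpha}\big)\|u(\cdot,r)\|_{C^{1}}\|\rho(\cdot,r)\|_{M_{\beta}}$. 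For the chemotaxis term the crucial point is to keep only first-order derivatives of $\rho$: using the pointwise identity $\nabla\cdot(\rho\nabla\Delta^{-1}\rho)=\nabla\rho\cdot\nabla\Delta^{-1}\rho+\rho^{2}$ (where, as in \eqref{eq-5.6-k-3}, the derivative term in $\|\cdot\|_{M_{\beta}}$ of the vector field is taken to be its divergence) together with the sharp bound \eqref{eq-Lemma-5.2-k}, $\|\nabla\Delta^{-1}\rho\|_{L^{\infty}}\le C(\|\rho\|_{L^{\infty}}+\|\rho\|_{L^{1}})$ — rather than its coarser consequence \eqref{eq-Lemma-5.2-k-2} — one gets $\|\rho\nabla\Delta^{-1}\rho\|_{M_{\beta}}\le C(\|\rho\|_{L^{\infty}}+\|\rho\|_{L^{1}})\|\rho\|_{M_{\beta}}$, hence a chemotaxis contribution bounded by $C\big((t-r)^{-1/\alpha}+(t-r)^{(\beta-1)/\alpha}\big)(\|\rho(\cdot,r)\|_{L^{\infty}}+\|\rho(\cdot,r)\|_{L^{1}})\|\rho(\cdot,r)\|_{M_{\beta}}$, with $\chi$ (and the other fixed parameters) absorbed into $C$.

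Collecting these contributions yields
\begin{equation*}
    M(t)\le C\big(1+t^{\beta/\alpha}\big)\|\rho_0\|_{M_{\beta}}+C\int_{0}^{t}\Phi(t,r)\,M(r)\,\mathrm{d}r,
\end{equation*}
where $\Phi(t,r)$ is exactly the bracketed integrand of \eqref{eq-5.13-k}. To conclude I would invoke a generalized Gronwall inequality with a weakly singular, $t$-dependent kernel: the singular factors $(t-r)^{-1/\alpha}$ and $(t-r)^{(\beta-1)/\alpha}$ are integrable in $r$ over $[0,t]$ precisely because $\alpha\in(1,2]$ makes $1/\alpha<1$ while $\beta\ge 0>1-\alpha$ makes $(\beta-1)/\alpha>-1$, and the prefactor $C(1+t^{\beta/\alpha})\|\rho_0\|_{M_{\beta}}$ is nondecreasing in $t$; this gives \eqref{eq-5.13-k}.

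The main obstacle is the chemotaxis estimate. One needs $\|\rho\nabla\Delta^{-1}\rho\|_{M_{\beta}}$ controlled by $(\|\rho\|_{L^{\infty}}+\|\rho\|_{L^{1}})\|\rho\|_{M_{\beta}}$, with no $H^{s}$ norm appearing (such a factor here would be useless for closing the global bound later), which forces both the use of the identity $\nabla\cdot\nabla\Delta^{-1}=\mathrm{Id}$, so that no second-order (Riesz-type) operator of $\rho$ survives, and a careful accounting of which derivative sits on the kernel and which on the function, so that the resulting powers of $t-r$ stay integrable — this is exactly where the hypothesis $\alpha>1$ is used. The remaining step, turning the integral inequality into the stated exponential bound, is routine but relies on the weakly singular rather than the elementary form of Gronwall's lemma.
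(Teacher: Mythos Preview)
Your proposal is correct and follows essentially the same route as the paper: estimate the three Duhamel contributions by first proving $\|u\rho\|_{M_{\beta}}\le\|u\|_{C^{1}}\|\rho\|_{M_{\beta}}$, $\|\rho^{q}\|_{M_{\beta}}\le C\|\rho\|_{L^{\infty}}^{q-1}\|\rho\|_{M_{\beta}}$, and $\|\rho\nabla\Delta^{-1}\rho\|_{M_{\beta}}\le C(\|\rho\|_{L^{\infty}}+\|\rho\|_{L^{1}})\|\rho\|_{M_{\beta}}$ (the last via \eqref{eq-Lemma-5.2-k} and the identity $\nabla\cdot(\rho\nabla\Delta^{-1}\rho)=\nabla\rho\cdot\nabla\Delta^{-1}\rho+\rho^{2}$), feed these into \eqref{estimativa-1-k}, and close with Gronwall using that $(1+t^{\beta/\alpha})$ is nondecreasing. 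Your explicit remark that the kernel is weakly singular in $t-r$ and that the integrability hinges on $\alpha>1$ and $\beta\ge 0$ is a point the paper leaves implicit.
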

\begin{proof}
    Note that the following estimates hold:
    \begin{equation}
        \label{estimates-lemma-5.7-k}
        \begin{aligned}
            \|u \rho\|_{M_{\beta}} & \leq\|u\|_{C^1}\|\rho\|_{M_{\beta}}, \\
            \|\rho^q\|_{M_{\beta}} & \leq C \|\rho\|_{L^{\infty}}^{q-1}\|\rho\|_{M_{\beta}}, \\
            \|\rho \nabla \Delta^{-1} \rho\|_{M_{\beta}} & \leq C\left(\|\rho\|_{L^{\infty}}+\|\rho\|_{L^1}\right)\|\rho\|_{M_{\beta}},
        \end{aligned}
    \end{equation}
    where the last inequality is due to
    \begin{equation*}
        \begin{split}
            \|\rho \nabla \Delta^{-1} \rho\|_{M_{\beta}} & =\int_{\mathbb{R}^d} \left[|\nabla \Delta^{-1} \rho| (|\rho(x)|+|\nabla \rho(x)|) +|\rho(x)|^2\right] \left(1+|x|^{\beta}\right) \mathrm{~d} x  
            \\ &
            \leq \left(\|\nabla \Delta^{-1} \rho\|_{L^{\infty}} + \| \rho\|_{L^{\infty}}\right)\|\rho\|_{M_{\beta}}
        \end{split}
    \end{equation*}
    and estimate \eqref{eq-Lemma-5.2-k}. 

    Now, employing estimates \eqref{estimativa-1-k} 
    and \eqref{estimates-lemma-5.7-k} to \eqref{eq-5.4-k}, we obtain 
    \begin{multline}
        \label{eq-5.14-k}
        \|\rho(\cdot, t)\|_{M_{\beta}} \leq C\left(1+t^{\frac{\beta}{\alpha}}\right)\|\rho_0\|_{M_{\beta}} 
        +C \int_{0}^{t} \left[\epsilon\left(1+\left(t-r\right)^{\frac{\beta}{\alpha}}\right)\|\rho(\cdot, r)\|_{L^{\infty}}^{q-1} \right. \\
         \left.+\left(\left(t-r\right)^{-\frac{1}{\alpha}}+\left(t-r\right)^{\frac{\beta-1}{\alpha}}\right)\left(\|u(\cdot, r)\|_{C^1}+\|\rho(\cdot, r)\|_{L^{\infty}}+\|\rho(\cdot, r)\|_{L^1}\right)\right] \|\rho(\cdot, r)\|_{M_{\beta}}  \mathrm{d} r.
    \end{multline}

    Then, applying Gronwall's inequality to \eqref{eq-5.14-k}, noticing that $(1+t^{\beta/\alpha})$ is a non-decreasing function on $t$, we obtain \eqref{eq-5.13-k}.
\end{proof}

Now we prove uniform in time bounds on the $H^s$ norm of the solution.

\begin{lemma}
    Let $\rho$ be the local mild solution whose existence is guaranteed by \Cref{Theorem-5.4-k}  and suppose $\|\rho(\cdot, t)\|_{L^{\infty}}$ does not exceed $N_0$ for all $0 \leq t \leq T$. Then, for $s$ even, we have
    \begin{equation*}
        \|\rho(\cdot, t)\|_{H^s} \leq \max \left(\|\rho_0\|_{H^s}, C\left(u, d, q, s, \chi, \epsilon, N_0\right)\right).
    \end{equation*}
\end{lemma}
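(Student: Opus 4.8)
The goal is a uniform-in-time bound on $\|\rho(\cdot,t)\|_{H^s}$ for even $s$, given that $\|\rho(\cdot,t)\|_{L^\infty}\le N_0$ on $[0,T]$. The natural approach is a differential inequality for the quantity $E(t) \equiv \|\rho(\cdot,t)\|_{H^s}^2 = \sum_{|\zeta|\le s}\|\partial^\zeta\rho\|_{L^2}^2$ (or, since $s$ is even, more conveniently $\|\rho\|_{L^2}^2 + \|\Lambda^s\rho\|_{L^2}^2$, using that $\Lambda^s = (-\Delta)^{s/2}$ is a differential operator of order $s$ when $s$ is even). First I would differentiate $E$ in time and substitute the equation \eqref{eq-4.1-k}, producing four groups of terms: the dissipative contribution from $-\Lambda^\alpha\rho$, the transport term $-u\cdot\nabla\rho$, the chemotactic term $-\chi\nabla\cdot(\rho\nabla\Delta^{-1}\rho)$, and the reaction term $-\epsilon\rho^q$.

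**Handling each term.** The dissipative term gives $-\|\Lambda^{s+\alpha/2}\rho\|_{L^2}^2 - \|\Lambda^{\alpha/2}\rho\|_{L^2}^2 \le 0$, which we keep on the good side. For the transport term, since $\nabla\cdot u = 0$, integration by parts and commutator (Kato–Ponce type) estimates bound $\big|\int \Lambda^s\rho\,\Lambda^s(u\cdot\nabla\rho)\big|$ by $C\|u\|_{\mathcal{C}^{s+1}}\|\rho\|_{H^s}^2$; the zeroth-order piece is similar and easier. For the reaction term, one writes $\partial^\zeta(\rho^q)$ via the Leibniz rule and the Moser-type inequality $\|\rho^q\|_{H^s}\le C\|\rho\|_{L^\infty}^{q-1}\|\rho\|_{H^s}\le C N_0^{q-1}\|\rho\|_{H^s}$, so this term is controlled by $C\epsilon N_0^{q-1}\|\rho\|_{H^s}^2$; its sign actually helps but we only need the bound. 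The chemotactic term is the delicate one: using the splitting as in \eqref{eq-5.6-k-3} and the estimate \eqref{eq-Lemma-5.2-k} (i.e. $\|\nabla\Delta^{-1}\rho\|_{L^\infty}\le C(\|\rho\|_{L^\infty}+\|\rho\|_{L^1})$), together with $\Delta\Delta^{-1}\rho = \rho$ to convert one factor of $\nabla\Delta^{-1}$ into the identity, one estimates $\int\Lambda^s\rho\,\Lambda^s\big(\nabla\cdot(\rho\nabla\Delta^{-1}\rho)\big)$ by $C(N_0 + \|\rho\|_{L^1})\|\rho\|_{H^s}^2$ plus lower-order cross terms of the same type. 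Here one invokes $\|\rho\|_{L^1} \le \|\rho_0\|_{L^1}$ from Remark \ref{monotone-decreasing-fraction-unfertilized-eggs} (monotone decrease of $m(t)$), so this prefactor is a constant depending only on the data.

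**Conclusion of the argument.** Combining everything yields
$$
\frac{d}{dt}\|\rho(\cdot,t)\|_{H^s}^2 \le C\big(\|u\|_{\mathcal{C}^{s+1}} + \epsilon N_0^{q-1} + N_0 + \|\rho_0\|_{L^1}\big)\|\rho(\cdot,t)\|_{H^s}^2 - \|\Lambda^{s+\alpha/2}\rho\|_{L^2}^2.
$$
At this point a naive Gronwall only gives exponential growth in $t$, which is not uniform; to get the uniform bound one must exploit the dissipation. The trick is that the $H^s$ norm can be interpolated: $\|\rho\|_{H^s} \le C\|\rho\|_{L^2}^{\theta}\|\Lambda^{s+\alpha/2}\rho\|_{L^2}^{1-\theta}$ for an appropriate $\theta\in(0,1)$, and $\|\rho\|_{L^2}\le \|\rho\|_{L^1}^{1/2}\|\rho\|_{L^\infty}^{1/2}\le (\|\rho_0\|_{L^1}N_0)^{1/2}$ is already uniformly bounded. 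Feeding this into the bad term via Young's inequality absorbs a fraction of $\|\Lambda^{s+\alpha/2}\rho\|_{L^2}^2$ into the dissipation and leaves a differential inequality of the form $\frac{d}{dt}E \le A - B\,E^{1+\sigma}$ for constants $A, B, \sigma > 0$ depending only on $u,d,q,s,\chi,\epsilon,N_0$ (and $\|\rho_0\|_{L^1}$, which is itself controlled by the data). Such an inequality forces $E(t) \le \max(E(0), (A/B)^{1/(1+\sigma)})$ for all $t$, which is exactly the claimed bound with $C(u,d,q,s,\chi,\epsilon,N_0) = (A/B)^{1/(2(1+\sigma))}$. The main obstacle is organizing the chemotactic term's commutator estimates carefully enough that the only non-dissipative prefactor is the uniformly-bounded quantity $N_0 + \|\rho_0\|_{L^1}$ rather than a growing Sobolev norm; everything else is routine energy-method bookkeeping.
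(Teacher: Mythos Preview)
Your overall plan coincides with the paper's: differentiate $\|\rho\|_{\dot H^s}^2$, bound the four contributions separately, then interpolate $\|\rho\|_{\dot H^s}$ between $\|\rho\|_{\dot H^{s+\alpha/2}}$ and $\|\rho\|_{L^2}$ (with $\|\rho\|_{L^2}\le\|\rho\|_{L^1}^{1/2}\|\rho\|_{L^\infty}^{1/2}$) to arrive at an inequality of the shape $E'\le A-BE^{1+\sigma}$. Your treatment of the dissipation, transport, and reaction terms matches the paper's (the paper uses Gagliardo--Nirenberg for $\rho^q$, your Moser estimate is equivalent).

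The chemotactic term is where your sketch is loosest, and here the route you indicate does not close as written. Splitting via \eqref{eq-5.6-k-3} and invoking $\|\nabla\Delta^{-1}\rho\|_{L^\infty}\le C(\|\rho\|_{L^\infty}+\|\rho\|_{L^1})$ runs into trouble at the Leibniz term with $s$ derivatives on $\rho$ and one on $\nabla\Delta^{-1}\rho$: that factor is $\partial_i\partial_j\Delta^{-1}\rho=R_iR_j\rho$, a Riesz transform, which is \emph{not} bounded on $L^\infty$, so you cannot pair it in $L^\infty$ against $\|\rho\|_{\dot H^s}^2$. A naive Kato--Ponce commutator with $v=\nabla\Delta^{-1}\rho$ hits the same obstruction through $\|\nabla v\|_{L^\infty}$. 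The paper avoids this by expanding $\int D^s\rho\,D^k\rho\,D^{s+2-k}\Delta^{-1}\rho$ directly, placing the Riesz-transform factor in $L^{p_2}$ with $p_2=2s/(s-k)<\infty$ (where Riesz transforms \emph{are} bounded), and then applying Gagliardo--Nirenberg $\|D^{j}\rho\|_{L^{2s/j}}\le C\|\rho\|_{L^\infty}^{1-j/s}\|D^s\rho\|_{L^2}^{j/s}$ to both intermediate factors; this yields the clean bound $C\|\rho\|_{L^\infty}\|\rho\|_{\dot H^s}^2$ (with no $\|\rho\|_{L^1}$). The single top-order term $\int\Delta^{s/2}\rho\,\nabla\Delta^{s/2}\rho\cdot\nabla\Delta^{-1}\rho$ is handled separately by integration by parts, becoming $-\tfrac12\int\rho|\Delta^{s/2}\rho|^2$. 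Once you replace your chemotactic estimate with this argument, the rest of your plan goes through exactly as you wrote.
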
 
\begin{proof}
    Here we follow the same steps as in \cite[Lemma 5.8]{Kiselev-Biomixing}.  Applying $\Delta^{s / 2}$ to \eqref{eq-4.1-k},  multiplying by $\Delta^{s / 2} \rho(x, t)$, and integrating, we find
    \begin{multline}
        \label{eq-5.16-k}
        \frac{1}{2} \frac{\mathrm{d}}{\mathrm{d} t}\|\rho\|_{\dot{H}^s}^2=\int_{\mathbb{R}^d} \Delta^{s / 2} \left[(u \cdot \nabla) \rho\right]\left(\Delta^{s / 2} \rho\right) \mathrm{~d} x-\int_{\mathbb{R}^d}\left(\Delta^{(s+\alpha) / 2} \rho\right)\left(\Delta^{s / 2} \rho\right) \mathrm{~d} x \\
        -\epsilon \int_{\mathbb{R}^d}\left(\Delta^{s / 2} \rho^q\right)\left(\Delta^{s / 2} \rho\right) \mathrm{~d} x
        +\chi \int_{\mathbb{R}^{d}}\left[\nabla \cdot \Delta^{s / 2}\left(\rho \nabla \Delta^{-1} \rho\right)\right]\left(\Delta^{s / 2} \rho\right) \mathrm{~d} x.
    \end{multline}
    
   Now, using the fact that $\nabla \cdot u=0$, for the first integral on the right-hand side of \eqref{eq-5.16-k}, we obtain
   \begin{equation*}
       \left|\int_{\mathbb{R}^{d}} \Delta^{s / 2} \left[(u \cdot \nabla) \rho\right]\left(\Delta^{s / 2} \rho\right) \mathrm{~d} x\right| \leq C\|u\|_{C^{s}}\|\rho\|_{\dot{H}^{s}}^{2}.
   \end{equation*}
 
  For the second one, we see that 
    \begin{equation*}
        \int_{\mathbb{R}^d}\left(\Delta^{(s+\alpha) / 2} \rho\right)\left(\Delta^{s / 2} \rho\right) \mathrm{~d} x=\|\rho\|_{\dot{H}^{s+\frac{\alpha}{2}}}^2.
    \end{equation*}

   The third integral can be written as a sum of a finite number of terms of the form $ \int_{\mathbb{R}^{d}} D^{s} \rho \; \prod_{i=1}^{q} D^{s_{i}} \rho \mathrm{d} x$, $s_{1}+\cdots+s_{q}=s$, $s_{i} \geq 0$, where $D^{l}$ denotes any partial derivative operator of the $l\text{th}$ order. By Hölder's inequality, we have
    \begin{equation*}
        \left|\int_{\mathbb{R}^{d}} D^{s} \rho \prod_{i=1}^{q} D^{s_{i}} \rho \mathrm{~d} x\right| \leq\|D^{s} \rho\|_{L^{2}} \prod_{i=1}^{q}\|D^{s_{i}} \rho\|_{p_{i}},
    \end{equation*}
    $\displaystyle \sum_{i=1}^{q} 1/p_{i}=1 / 2$ . Then, taking $p_{i}=2 s / s_{i}$ and using Gagliardo-Nirenberg inequality, we obtain 
    \begin{equation}
        \label{eq-5.17-k}
        \|D^{s_{i}} \rho\|_{L^{2 s / s_{i}}} \leq C\|\rho\|_{L^{\infty}}^{1-\frac{s_{i}}{s}}\|D^{s} \rho\|_{L^{2}}^{\frac{s_{i}}{s}},
    \end{equation}
    and hence
    \begin{equation*}
        \left|\int_{\mathbb{R}^{d}} \Delta^{s / 2} \rho^{q} \Delta^{s / 2} \rho \mathrm{~d} x\right| \leq C\|\rho\|_{L^{\infty}}^{q-1}\|\rho\|_{\dot{H}^{s}}^{2}.
    \end{equation*}

    Additionally, to express the fourth integral, we can use a sum of a finite number of terms of the form $ \int_{\mathbb{R}^{d}} D^{s} \rho D^{k} \rho D^{s+2-k} \Delta^{-1} \rho \mathrm{~d} x$, where $k=0, \ldots, s$.  The only term one gets from the direct differentiation that does not appear to be of this form is $ \int_{\mathbb{R}^{d}} \Delta^{s / 2} \rho \nabla \Delta^{s / 2} \rho \nabla \Delta^{-1} \rho \mathrm{~d} x$. However, we find that this term is equal to $ -\frac{1}{2} \int_{\mathbb{R}^{d}} \rho \left|\Delta^{s / 2} \rho\right|^{2}  \mathrm{~d} x$ through integration by parts.
    %
    %
    Now,
    \begin{equation*}
        \left|\int_{\mathbb{R}^{d}} D^{s} \rho D^{k} \rho D^{s+2-k} \Delta^{-1} \rho \mathrm{~d} x\right| \leq C\|D^{s} \rho\|_{L^{2}}\|D^{k} \rho\|_{L^{p_{1}}}\|D^{s-k} \rho\|_{L^{p_{2}}},
    \end{equation*}
    $1/p_{1}+1/p_{2}=1 / 2$, $p_{2}<\infty$, where 
    we used boundedness of Riesz transforms on $L^{p_{2}}$, $p_{2}<\infty$. Then, setting $p_{1}=\frac{2 s}{k}$, $p_{2}=\frac{2 s}{s-k}$, and using Gagliardo-Nirenberg inequality \eqref{eq-5.17-k} with $s_{i}=$ $k, s-k$, we get
    \begin{equation*}
        \left|\int_{\mathbb{R}^{d}} D^{s} \rho D^{k} \rho D^{s+2-k} \Delta^{-1} \rho \mathrm{~d} x\right| \leq C\|\rho\|_{L^{\infty}}\|\rho\|_{\dot{H}^{s}}^{2}.
    \end{equation*}
    
   Thus, putting all the estimates into \eqref{eq-5.16-k}, we find that
    \begin{equation}
        \label{eq-5.18-k-1}
        \frac{1}{2} \frac{\mathrm{d}}{\mathrm{d} t}\|\rho\|_{\dot{H}^s}^2 \leq C\|\rho\|_{L^{\infty}}\|\rho\|_{\dot{H}^s}^2-\|\rho\|_{\dot{H}^{s+\frac{\alpha}{2}}}^2,       
    \end{equation}
   and from Hölder's inequality 
   we obtain $\|\rho\|_{\dot{H}^s}\leq C\|\rho\|_{\dot{H}^{s+\frac{\alpha}{2}}}^{\frac{s}{s+\alpha/2}}\|\rho\|_{L^{2}}^{\frac{\alpha/2}{s+\alpha/2}}$. 
    Then, we can rewrite \eqref{eq-5.18-k-1} as
    \begin{equation}
        \label{eq-5.18-k-2}
        \frac{1}{2} \frac{\mathrm{d}}{\mathrm{d} t}\|\rho\|_{\dot{H}^s}^2 \leq C\|\rho\|_{L^{\infty}}\|\rho\|_{\dot{H}^{s+\frac{\alpha}{2}}}^{\frac{2s}{s+\alpha/2}}\|\rho\|_{L^{2}}^{\frac{\alpha}{s+\alpha/2}}-\|\rho\|_{\dot{H}^{s+\frac{\alpha}{2}}}^2.       
    \end{equation} 

    Note that, as $\|\rho\|_{L^{2}}\leq \|\rho\|_{L^{1}}^{1/2} \|\rho\|_{L^{\infty}}^{1/2}< \infty$, the differential inequality \eqref{eq-5.18-k-2} implies the result of the lemma, since if $C\|\rho\|_{L^{\infty}}\|\rho\|_{\dot{H}^{s+\frac{\alpha}{2}}}^{\frac{2s}{s+\alpha/2}}\|\rho\|_{L^{2}}^{\frac{\alpha}{s+\alpha/2}}-\|\rho\|_{\dot{H}^{s+\frac{\alpha}{2}}}^2>0$, then
    \begin{equation*}
         \|\rho\|_{\dot{H}^s}\leq \|\rho\|_{\dot{H}^{s+\frac{\alpha}{2}}} < \|\rho\|_{L^{\infty}}^{\frac{s-\alpha / 2}{s}} \|\rho\|_{L^{2}} \leq C \|\rho\|_{L^{1}} N_0^{\frac{3}{2}-\frac{\alpha}{2s}},
    \end{equation*}
    whereas if $C\|\rho\|_{L^{\infty}}\|\rho\|_{\dot{H}^{s+\frac{\alpha}{2}}}^{\frac{2s}{s+\alpha/2}}\|\rho\|_{L^{2}}^{\frac{\alpha}{s+\alpha/2}}-\|\rho\|_{\dot{H}^{s+\frac{\alpha}{2}}}^2\leq 0$, then $\|\rho\|_{\dot{H}^s}\leq \|\rho_0\|_{\dot{H}^s}$.
\end{proof}

Given the $M_{\beta}$ and $H^s$ norms bounds, we proved that the local solution can now be continued globally, establishing the following theorem: 
\begin{theorem}[\textbf{Global existence of smooth solutions}] \label{Theorem-4.1-k}
    Let $q>2$, $s>d / 2+1$ be integers, $\alpha \in (1,2]$, and $\beta$ satisfy $0 \leq \beta < \alpha$. Assume $u \in C^{\infty}\left(\mathbb{R}^{d} \times[0, \infty)\right)$ is divergence free, and $\rho_{0} \in K_{s, \beta}$. Then, there exists a unique mild solution $\rho$ to equation \eqref{eq-4.1-k} in $C\left(K_{s, \beta},[0, \infty)\right) \cap C^{\infty}\left(\mathbb{R}^{d} \times(0, \infty)\right)$.
\end{theorem}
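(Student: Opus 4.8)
The plan is a continuation argument: take the local mild solution of \Cref{Theorem-5.4-k}, upgrade the intermediate \emph{a priori} bounds (\Cref{Lemma-5.6-k}, \Cref{lemma-5.7-k}, and the $H^s$ energy estimate) into the statement that $\|\rho(\cdot,t)\|_{K_{s,\beta}}$ stays finite on every finite time interval, and then invoke a blow-up criterion to conclude that the maximal existence time is infinite. Uniqueness propagates from the local statement, and the promised $C^\infty$ regularity on $\mathbb{R}^d\times(0,\infty)$ is exactly the content of the parabolic-smoothing remark following \Cref{Theorem-5.4-k}; so the only substantial point is the global-in-time control of the norms.

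For the blow-up criterion, I would let $[0,T_{\max})$ be the maximal interval obtained by successively restarting \eqref{eq-4.1-k}. The key observation is that the local time span furnished by \Cref{Lemma-5.3-k}--\Cref{Theorem-5.4-k} depends on the datum only through $\|\rho_0\|_{K_{s,\beta}}$, on the fixed parameters $q,d,s,\alpha,\epsilon,\chi$, and on $\|u\|_{C^s}$ over the (compact) time window in play, the last of which is finite on $[0,T]$ for every $T$ by the hypothesis on $u$. Hence if $T_{\max}<\infty$ one must have $\limsup_{t\uparrow T_{\max}}\|\rho(\cdot,t)\|_{K_{s,\beta}}=+\infty$: otherwise, restarting at a time sufficiently close to $T_{\max}$ from a uniformly bounded datum would prolong the solution past $T_{\max}$, a contradiction.

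It remains to bound $\|\rho(\cdot,t)\|_{K_{s,\beta}}=\|\rho(\cdot,t)\|_{M_\beta}+\|\rho(\cdot,t)\|_{H^s}$ on $[0,T]$ for each finite $T$. \Cref{Lemma-5.6-k} already gives the uniform bound $\|\rho(\cdot,t)\|_{L^\infty}\le N_0$ throughout the life-span (this is where $q>2$ is used). Next I would control the $L^1$ norm on its own: pairing \eqref{eq-4.1-k} with $\operatorname{sgn}\rho$ (rigorously with a smooth approximation of the sign, then passing to the limit), the drift term drops because $\nabla\cdot u=0$, the fractional dissipation has the favorable sign by the pointwise inequality $\operatorname{sgn}(\rho)\,\Lambda^\alpha\rho\ge\Lambda^\alpha|\rho|$ so its contribution is at most $-\int\Lambda^\alpha|\rho|=0$, and the chemotactic term integrates to zero after one integration by parts since $\int\operatorname{sgn}(\rho)\,\nabla\!\cdot\!\big(\rho\,\nabla\Delta^{-1}\rho\big)=\int\nabla|\rho|\cdot\nabla\Delta^{-1}\rho+\int|\rho|\rho=0$; what survives is $\frac{d}{dt}\|\rho\|_{L^1}\le\epsilon N_0^{q-1}\|\rho\|_{L^1}$, so $\|\rho(\cdot,t)\|_{L^1}$ is finite on every finite interval (for nonnegative data it is in fact nonincreasing, cf. \Cref{monotone-decreasing-fraction-unfertilized-eggs}). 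Feeding $\|\rho\|_{L^\infty}\le N_0$, this $L^1$ bound, and $\|u\|_{C^1}$ into \Cref{lemma-5.7-k} yields $\sup_{[0,T]}\|\rho(\cdot,t)\|_{M_\beta}<\infty$; feeding $\|\rho\|_{L^\infty}\le N_0$ and $\|u\|_{C^s}$ into the $H^s$ energy estimate yields $\sup_{[0,T]}\|\rho(\cdot,t)\|_{H^s}\le\max\big(\|\rho_0\|_{H^s},C(T)\big)<\infty$ when $s$ is even. Together with the previous paragraph this rules out $T_{\max}<\infty$, giving $\rho\in C(K_{s,\beta},[0,\infty))$.

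The main difficulty is not any single deep estimate but the bookkeeping needed to close the loop, and two points deserve attention. First, the $L^1$ bound must be obtained for possibly sign-changing data, because it enters \Cref{lemma-5.7-k} as an input and cannot itself be extracted from the $M_\beta$ estimate without circularity; the cancellation in the chemotactic term recorded above is what makes this work. Second, the $H^s$ energy estimate is carried out with the differential operator $\Delta^{s/2}$ and so is written for $s$ even, whereas the theorem allows any integer $s>d/2+1$. For odd $s$ I would either rerun that estimate with $\Lambda^s$ in place of $\Delta^{s/2}$, absorbing the extra commutators with the same Gagliardo--Nirenberg and Riesz-transform bounds, or---more cheaply---use instantaneous smoothing: by the regularity remark $\rho(\cdot,t_0)\in H^m$ for every $m$ and every $t_0>0$, so picking $t_0>0$ small and an even $s'\ge s$ and applying the global theory already established for $s'$ to the datum $\rho(\cdot,t_0)\in K_{s',\beta}$ produces a global solution in $K_{s',\beta}\subset K_{s,\beta}$. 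Finally, $\rho\in C^\infty(\mathbb{R}^d\times(0,\infty))$ follows from the same parabolic smoothing, applied on each $(0,t_0]$ and bootstrapped in time through the equation.
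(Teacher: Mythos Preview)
Your proposal is correct and follows exactly the paper's continuation strategy, which simply invokes the $L^\infty$, $M_\beta$, and $H^s$ a priori bounds (\Cref{Lemma-5.6-k}, \Cref{lemma-5.7-k}, and the subsequent $H^s$ lemma) to extend the local solution of \Cref{Theorem-5.4-k} globally. You have in fact been more careful than the paper on two points it leaves implicit: the separate $L^1$ control needed as input to \Cref{lemma-5.7-k} for possibly sign-changing data (your Kato/C\'ordoba--C\'ordoba plus chemotactic cancellation argument), and the passage from even $s$---the only case the paper's $H^s$ energy lemma actually treats---to arbitrary integer $s>d/2+1$ via instantaneous smoothing.
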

%

Observe that, for $\rho_0 \geq 0$, $\rho$ in equation \eqref{eq-4.1-k} describes the densities of sperm and egg gametes, and the biological interest lies in nonnegative solutions, $\rho \geq 0$. Then, before concluding this section, we present a result that ensures the solution guaranteed by \Cref{Theorem-5.4-k} or \Cref{Theorem-4.1-k}, corresponding to a nonnegative initial condition, remains nonnegative.

\begin{theorem} \label{Theorem-nonnegative-solution}
    Let $q>2$, $s>d / 2+1$ be integers, $\alpha \in (1,2]$, and $\beta$ satisfy $0 \leq \beta < \alpha$. Suppose that $u \in C^{\infty}\left(\mathbb{R}^{d} \times[0, \infty)\right)$ is divergence free and $\rho_{0} \in K_{s, \beta}$ is nonnegative. Then, the solution $\rho$ guaranteed by \Cref{Theorem-5.4-k} or \Cref{Theorem-4.1-k} remains nonnegative for all $x$ and $t$.
\end{theorem}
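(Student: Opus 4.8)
The plan is a maximum-principle/energy argument for the negative part $\rho^-:=\max(-\rho,0)\ge 0$: I would show that $\|\rho^-(\cdot,t)\|_{L^2}^2$ satisfies a linear Gronwall inequality on every finite time interval inside the interval of existence, and since it vanishes at $t=0$ (because $\rho_0\ge 0$) this forces $\rho^-\equiv 0$, i.e. $\rho\ge 0$. Fix $T>0$ inside the existence interval. By \Cref{Theorem-5.4-k} (or \Cref{Theorem-4.1-k}) together with the parabolic smoothing already invoked in \Cref{sec:Global-Existence}, $\rho\in C([0,T];K_{s,\beta})\cap C^\infty(\mathbb{R}^d\times(0,T])$, so (using $s>d/2+1$) all of $\|\rho(\cdot,t)\|_{H^s}$, $\|\rho(\cdot,t)\|_{L^\infty}$, $\|\nabla\rho(\cdot,t)\|_{L^\infty}$ and $\|\rho(\cdot,t)\|_{L^1}\le\|\rho(\cdot,t)\|_{M_\beta}$ are bounded uniformly for $t\in[0,T]$.

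First I would recast \eqref{eq-4.1-k}. From $\nabla\cdot\big(\rho\,\nabla(-\Delta)^{-1}\rho\big)=\nabla\rho\cdot\nabla(-\Delta)^{-1}\rho-\rho^{2}$, equation \eqref{eq-4.1-k} becomes a transport--fractional-diffusion equation with bounded coefficients,
\[
\partial_t\rho+V\cdot\nabla\rho+\Lambda^\alpha\rho=h\,\rho,\qquad V:=u+\chi\,\nabla(-\Delta)^{-1}\rho,\qquad h:=\chi\rho-\epsilon\rho^{q-1},
\]
where $\nabla\cdot V=-\chi\rho$, and where $\|V\|_{L^\infty(\mathbb{R}^d\times[0,T])}+\|h\|_{L^\infty(\mathbb{R}^d\times[0,T])}\le M(T)<\infty$ by the bounds above, estimate \eqref{eq-Lemma-5.2-k}, and the hypotheses on $u$. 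Next I would multiply this equation by $-2\rho^-$ and integrate in $x$. Using $\rho^-\nabla\rho=-\tfrac12\nabla(\rho^-)^2$, $\nabla\cdot V=-\chi\rho$, and $\rho=-\rho^-$ on $\{\rho<0\}$, the transport term contributes $\int(\nabla\cdot V)(\rho^-)^2\,dx=\chi\int(\rho^-)^3\,dx$, the zeroth-order term contributes $2\int h(\rho^-)^2\,dx$, and the diffusive term contributes $2\int\rho^-\Lambda^\alpha\rho\,dx$. For the diffusive term I would use the symmetrized representation
\[
\int_{\mathbb{R}^d}\rho^-\,\Lambda^\alpha\rho\,dx=\frac{c_{d,\alpha}}{2}\iint_{\mathbb{R}^d\times\mathbb{R}^d}\frac{\big(\rho^-(x)-\rho^-(y)\big)\big(\rho(x)-\rho(y)\big)}{|x-y|^{d+\alpha}}\,dx\,dy\le 0,
\]
the inequality holding because the integrand of the double integral is pointwise $\le 0$ (if $\rho^-(x)>\rho^-(y)$ then $\rho(x)<0$ and, inspecting the two possibilities for the sign of $\rho(y)$, $\rho(x)<\rho(y)$; symmetrically). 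This is precisely where the nonlocality matters: when $\alpha=2$ the analogous cross term vanishes because $\nabla\rho^+$ and $\nabla\rho^-$ are supported on disjoint sets, whereas for $\alpha<2$ the positive-/negative-part interaction survives but carries the favorable sign. Collecting the three contributions and bounding $\chi\int(\rho^-)^3\le\chi\|\rho\|_{L^\infty}\|\rho^-\|_{L^2}^2$ and $2\int h(\rho^-)^2\le 2\|h\|_{L^\infty}\|\rho^-\|_{L^2}^2$ gives
\[
\frac{d}{dt}\|\rho^-(\cdot,t)\|_{L^2}^2\le\big(\chi\|\rho(\cdot,t)\|_{L^\infty}+2\|h(\cdot,t)\|_{L^\infty}\big)\|\rho^-(\cdot,t)\|_{L^2}^2\le C(T)\,\|\rho^-(\cdot,t)\|_{L^2}^2
\]
on $(0,T]$. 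Since $t\mapsto\|\rho^-(\cdot,t)\|_{L^2}^2$ is continuous on $[0,T]$ (because $\rho\in C([0,T];H^s)\hookrightarrow C([0,T];L^2)$ and $r\mapsto r^-$ is Lipschitz) and equals $0$ at $t=0$, Gronwall's inequality yields $\rho^-\equiv 0$ on $[0,T]$; letting $T$ exhaust the existence interval gives $\rho\ge 0$ for all $x$ and $t$.

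The main obstacle --- really the only delicate point, since there is no conceptual difficulty --- is the rigorous justification of the energy identity above: differentiating $\int(\rho^-)^2\,dx$ in $t$ through the merely $C^1$ map $r\mapsto(r^-)^2$, and checking that $\rho^-(\cdot,t)\in\dot H^{\alpha/2}$ so that $\int\rho^-\Lambda^\alpha\rho$ is finite and the symmetrized double integral is absolutely convergent (near the diagonal one uses the Lipschitz bounds on $\rho^-$ and $\rho$ together with $\alpha<2$; away from it one uses the $L^1\cap L^\infty$ decay). Both points follow from the parabolic smoothing $\rho(\cdot,t)\in H^s$, $s>\alpha$, for $t>0$, together with the Lipschitz continuity and integrability of $\rho^-$; near $t=0$ one works on $[\delta,T]$ and lets $\delta\to0^+$ using continuity of the $K_{s,\beta}$-valued trajectory. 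Alternatively, one can replace $(\cdot^-)^2$ by a family of smooth convex functions approximating it and pass to the limit, which is the standard device for dissipative equations with fractional diffusion.
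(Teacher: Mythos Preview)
Your proof is correct and is essentially the same approach the paper takes: the paper's proof is a one-line sketch stating that it ``relies on the inequality $\int_{\mathbb{R}^d}\rho_{-}\Lambda^\alpha\rho\,dx\ge 0$, where $\rho_{-}(x,t)=\min(\rho(x,t),0)$, alongside standard arguments,'' which (up to the sign convention $\rho_-=-\rho^-$) is precisely your key diffusive inequality. You have carefully supplied the ``standard arguments'' the paper omits --- the recasting of the chemotaxis term as transport plus zeroth order, the Gronwall estimate on $\|\rho^-\|_{L^2}^2$, and the justification of the symmetrized bilinear form --- so your write-up is a fleshed-out version of exactly the route the authors had in mind.
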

\begin{proof}
    The proof relies on the inequality $ \displaystyle \int_{\mathbb{R}^d}\rho_{-} \Lambda^\alpha \rho \mathrm{~d} x \geq 0$, where $\rho_{-}(x,t)=\min (\rho(x,t), 0)$, alongside standard arguments. 
\end{proof}
 
\section{Reaction efficiency} \label{sec:Reaction-efficiency}

In this section, we aim to analyze the dynamics of the total fraction of unfertilized eggs,  denoted as $m(t)$ and given by \eqref{unfertilized-eggs}, in both chemotactic and chemotaxis-free scenarios.
Our investigation delves into understanding the impact of chemotaxis on reaction efficiency by comparing these specific cases.

It is important to emphasize that, for a nonnegative initial condition $\rho_0\geq 0$, the quantity $m(t)$ corresponds to the $L^{1}$ norm of $\rho$. This is a consequence of \Cref{Theorem-nonnegative-solution}, which establishes the nonnegativity of the solution provided by either \Cref{Theorem-5.4-k} or \Cref{Theorem-4.1-k} in such case.
We also highlight that $m(t)$ exhibits a monotone decreasing behavior. To reinforce this statement, we next enhance the proof presented in  \Cref{monotone-decreasing-fraction-unfertilized-eggs} and provide a rigorous justification of this observation. 
\begin{Proposition} \label{Proposition-monotone-decreasing-behavior}
    Consider the nonnegative initial condition $\rho_0 \geq 0$, and let $\rho$ be the mild solution provided by \Cref{Theorem-5.4-k} or \Cref{Theorem-4.1-k}. Then, it follows that the total fraction of unfertilized eggs by time $t$, given by \eqref{unfertilized-eggs}, exhibits a monotone decreasing behavior.
\end{Proposition}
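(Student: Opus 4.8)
The plan is to show $m(t) = \|\rho(\cdot,t)\|_{L^1}$ is differentiable in $t$ with nonpositive derivative. First I would use \Cref{Theorem-nonnegative-solution} to identify $m(t)$ with the $L^1$ norm, since $\rho \geq 0$; then the regularity statement $\rho \in C^\infty(\mathbb{R}^d \times (0,\infty))$ together with $\rho \in C(K_{s,\beta}, [0,\infty))$ (so that $\rho$, $\nabla\rho$, and the relevant moments decay and are integrable uniformly on compact time intervals) justifies differentiating under the integral sign and integrating by parts. Concretely, I would compute
\begin{equation*}
    \frac{\mathrm{d}}{\mathrm{d}t} \int_{\mathbb{R}^d} \rho(x,t)\,\mathrm{d}x = \int_{\mathbb{R}^d} \partial_t \rho\,\mathrm{d}x = \int_{\mathbb{R}^d} \left(-u\cdot\nabla\rho - \Lambda^\alpha \rho - \chi \nabla\cdot(\rho\nabla\Delta^{-1}\rho) - \epsilon\rho^q\right)\mathrm{d}x,
\end{equation*}
using equation \eqref{eq-4.1-k}.

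Next I would argue that the first three terms on the right integrate to zero. For the advection term, $\int u\cdot\nabla\rho = \int \nabla\cdot(u\rho) = 0$ since $\nabla\cdot u = 0$ and $\rho$ decays (Gauss/divergence theorem, with decay from membership in $M_\beta$). For the chemotaxis term, $\int \nabla\cdot(\rho\nabla\Delta^{-1}\rho) = 0$ for the same reason, provided the flux $\rho\nabla\Delta^{-1}\rho$ decays adequately at infinity; the bound \eqref{eq-Lemma-5.2-k} controls $\|\nabla\Delta^{-1}\rho\|_{L^\infty}$, and combined with the $M_\beta$ decay of $\rho$ this makes the boundary term vanish. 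For the fractional Laplacian term, I would invoke exactly the computation in \Cref{monotone-decreasing-fraction-unfertilized-eggs}: $\int_{\mathbb{R}^d}\Lambda^\alpha\rho\,\mathrm{d}x = \widehat{(\Lambda^\alpha\rho)}(0) = |\xi|^\alpha\widehat{\rho}(\xi)\big|_{\xi=0} = 0$, which is legitimate because $\rho(\cdot,t) \in H^s$ with $s$ large and decays, so $\Lambda^\alpha\rho \in L^1$ and its Fourier transform is continuous at the origin. This leaves
\begin{equation*}
    \frac{\mathrm{d}}{\mathrm{d}t}\|\rho(\cdot,t)\|_{L^1} = -\epsilon \int_{\mathbb{R}^d} \rho^q\,\mathrm{d}x \leq 0,
\end{equation*}
since $\rho \geq 0$ and $\epsilon > 0$, giving monotonicity. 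Continuity of $m$ at $t=0$ follows from $\rho \in C(K_{s,\beta},[0,\infty))$, so the conclusion extends to $[0,\infty)$.

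The main obstacle is the rigorous justification of integrating $\Lambda^\alpha\rho$ and the chemotaxis flux to zero: one must confirm that $\Lambda^\alpha\rho(\cdot,t) \in L^1(\mathbb{R}^d)$ and that $\rho\nabla\Delta^{-1}\rho$ has enough decay for the divergence theorem, neither of which is completely immediate from the $K_{s,\beta}$ bound alone — in particular $\nabla\Delta^{-1}\rho$ need not decay fast, so one should instead integrate $\nabla\cdot(\rho\nabla\Delta^{-1}\rho) = \nabla\rho\cdot\nabla\Delta^{-1}\rho + \rho^2$ carefully or exploit cancellation over large balls. I expect the cleanest route is to approximate by integrating over $B_R$ and send $R \to \infty$, controlling boundary contributions via the moment bounds in $M_\beta$ and the parabolic smoothing that places $\rho(\cdot,t)$ in $H^m$ for all $m$ when $t>0$; alternatively, one can differentiate the Duhamel formula \eqref{eq-5.4-k} directly and use \Cref{integral-k}\ref{estimate-k(c)}, namely $\int \nabla K^\alpha = 0$ and $\int K^\alpha = 1$, to kill the divergence-form terms without ever invoking a boundary term. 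Since \Cref{monotone-decreasing-fraction-unfertilized-eggs} already sketches the heuristic, I would keep this proof short, flagging only the integrability points that require the global regularity theorem.
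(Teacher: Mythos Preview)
Your proposal is correct, and in fact the ``alternative'' route you mention at the very end --- integrating the Duhamel formula \eqref{eq-5.4-k} directly and using \Cref{integral-k}\ref{estimate-k(c)} to kill the divergence-form terms --- is precisely the approach the paper takes. The paper never differentiates the PDE or appeals to decay of boundary fluxes; instead it integrates the mild-solution identity over $\mathbb{R}^d$, applies Fubini/Tonelli, and uses $\int K^\alpha_t = 1$ and $\int \nabla K^\alpha_t = 0$ to reduce everything to $\int\rho_0 - \epsilon\int_0^t\int\rho^q$, whence the derivative formula follows. The only hypothesis needed is that $v = -u\rho + \chi\rho\nabla\Delta^{-1}\rho \in L^1$, which is immediate from $\|\nabla\Delta^{-1}\rho\|_{L^\infty}$ being finite and $\rho\in L^1$. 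This buys exactly what you anticipated: no need to verify $\Lambda^\alpha\rho\in L^1$ or to control flux terms at infinity, so the integrability obstacles you flag simply do not arise. Your primary route via direct differentiation is also valid given the regularity established in \Cref{Theorem-4.1-k}, but the Duhamel argument is the cleaner one here, and you already identified it.
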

\begin{proof}
    Integrating both sides of equation \eqref{eq-5.4-k} and applying Tonelli's theorem, Fubini's theorem, and \Cref{integral-k}\ref{estimate-k(c)}, we obtain
    \begin{equation*}
        \int_{\mathbb{R}^d} K^{\alpha}_{t}* \rho_{0}(x) \mathrm{~d} x= \int_{\mathbb{R}^d}\rho_{0}(y) \left(\int_{\mathbb{R}^d} K^{\alpha}_{t}(x-y)   \mathrm{~d} x\right) \mathrm{~d} y= \int_{\mathbb{R}^d}\rho_{0}(y) \mathrm{~d} y, 
    \end{equation*}
    as $\rho_{0} \in L^1$. 
    Similarly, we have
    \begin{equation*}
       \int_0^t  \int_{\mathbb{R}^d} K^{\alpha}_{t-s}* \rho^q \mathrm{~d} x \mathrm{d} s= \int_0^t  \int_{\mathbb{R}^d}\rho^q\left(y,s\right)  \left(\int_{\mathbb{R}^d} K^{\alpha}_{t-s}(x-y)   \mathrm{~d} x\right) \mathrm{~d} y \mathrm{d} s=\int_0^t \int_{\mathbb{R}^d}\rho^q \left(y,s\right) \mathrm{~d} y \mathrm{d} s , 
    \end{equation*}
    as $\rho^q(x,t) \in L^1$: $ \| \rho^q(\cdot,t) \|_{L^1} \leq \| \rho(\cdot,t)\|_{L^{\infty}}^{q-1} \| \rho(\cdot,t)\|_{L^1}$. 
    Additionally, for $v=-u \rho+\chi \rho \nabla \Delta^{-1} \rho \in L^1$, we have
    \begin{equation*}
        \int_{\mathbb{R}^d} \nabla K^{\alpha}_{t-s} * v(x,s)  \mathrm{~d} x = \int_{\mathbb{R}^d} v(y,s) \left(\int_{\mathbb{R}^d} \nabla K^{\alpha}_{t-s}(x-y)   \mathrm{~d} x\right) \mathrm{~d} y=0. 
    \end{equation*}
    Note, as proved before, $\| v\|_{L^1} \leq \|u\|_{C^1} \|\rho\|_{L^1} +\|\nabla \Delta^{-1} \rho\|_{L^{\infty}} \|\rho\|_{L^1} \leq C \left(\|u\|_{C^1}+ \|\rho\|_{L^{\infty}}+\|\rho\|_{L^1}\right)\|\rho\|_{L^1}$.  

    Therefore, we establish that 
    \begin{equation*}
        \int_{\mathbb{R}^d} \rho (x, t) \mathrm{~d} x=\int_{\mathbb{R}^d} \rho_0(x) \mathrm{~d} x- \epsilon \int_0^t \int_{\mathbb{R}^d}\rho^q (x,s) \mathrm{~d} x \mathrm{d} s 
    \end{equation*}
    for every $t \geq 0$ where  the solution exists.  Equivalently, we can write 
    \begin{equation}
        \label{monotone-decreasing}
        \frac{\mathrm{d}}{\mathrm{d} t}\int_{\mathbb{R}^d} \rho (x, t) \mathrm{~d} x=- \epsilon \int_{\mathbb{R}^d}\rho^q (x,t) \mathrm{~d} x.
    \end{equation}   
\end{proof}

The following lemma provides an inequality that 
will be important in establishing a lower bound for the $L^1$ norm of $\rho$ in both chemotactic and chemotaxis-free scenarios.
\begin{lemma}{\citep{A-maximum-principle-Cordoba,Guo-2015-Partial-Differential-Equations}} \label{Cordoba_Cordoba}
    Let $\alpha \in[0,2]$ and $f, \Lambda^\alpha f \in L^p$. Then, for any $p \geq 2$, it is true that
    \begin{equation}
        \label{Positivity-Lemma-111}
        \int_{\mathbb{R}^d}|f|^{p-2} f \Lambda^\alpha f \mathrm{~d} x \geq \frac{2}{p} \int_{\mathbb{R}^d}\left(\Lambda^{\frac{\alpha}{2}}|f|^{\frac{p}{2}}\right)^2 \mathrm{~d} x .
    \end{equation}
\end{lemma}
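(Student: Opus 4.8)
The plan is to prove the pointwise inequality \eqref{Positivity-Lemma-111} by exploiting the integral representation \eqref{eq-definition-fractional-Laplacian-integral} of $\Lambda^\alpha$ together with an elementary convexity estimate on the integrand. First I would reduce to the range $\alpha\in(0,2)$, where the singular integral formula is available; the endpoints $\alpha=0$ (trivial) and $\alpha=2$ (the classical identity $\int |f|^{p-2}f(-\Delta f)=\frac{4}{p^2}(p-1)\int|\nabla(|f|^{p/2})|^2\ge\frac2p\int|\nabla(|f|^{p/2})|^2$, using $p\ge2$) can be treated separately or recovered by a limiting argument. For $\alpha\in(0,2)$, using \eqref{eq-definition-fractional-Laplacian-integral} and symmetrizing the double integral in $x$ and $y$, one writes
\begin{equation*}
    \int_{\mathbb{R}^d}|f|^{p-2}f\,\Lambda^\alpha f\,\mathrm{d}x=\frac{c_{d,\alpha}}{2}\,\mathrm{P.V.}\!\int_{\mathbb{R}^d}\!\int_{\mathbb{R}^d}\frac{\bigl(|f(x)|^{p-2}f(x)-|f(y)|^{p-2}f(y)\bigr)\bigl(f(x)-f(y)\bigr)}{|x-y|^{d+\alpha}}\,\mathrm{d}y\,\mathrm{d}x,
\end{equation*}
and similarly
\begin{equation*}
    \int_{\mathbb{R}^d}\bigl(\Lambda^{\alpha/2}|f|^{p/2}\bigr)^2\,\mathrm{d}x=\frac{c_{d,\alpha/2}'}{2}\int_{\mathbb{R}^d}\!\int_{\mathbb{R}^d}\frac{\bigl(|f(x)|^{p/2}-|f(y)|^{p/2}\bigr)^2}{|x-y|^{d+\alpha}}\,\mathrm{d}y\,\mathrm{d}x,
\end{equation*}
where I would need to check (or cite from \citep{A-maximum-principle-Cordoba}) that the two normalization constants are compatible, or simply absorb the comparison of constants into the pointwise inequality below.

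The heart of the matter is then the pointwise claim: for all real numbers $a,b$ and $p\ge 2$,
\begin{equation*}
    \bigl(|a|^{p-2}a-|b|^{p-2}b\bigr)(a-b)\ \ge\ \frac{2}{p}\,\Bigl(|a|^{p/2}-|b|^{p/2}\Bigr)^2 .
\end{equation*}
Granting this, I substitute it into the symmetrized double integral, note that the kernel $|x-y|^{-d-\alpha}$ is nonnegative so the inequality is preserved under integration, and match constants to conclude \eqref{Positivity-Lemma-111}. The integrability hypotheses $f,\Lambda^\alpha f\in L^p$ guarantee the left-hand side is finite and that the principal-value manipulations and Fubini/symmetrization are justified (approximating $f$ by truncations or mollifications if one wants to be fully careful, then passing to the limit using Fatou on the nonnegative right-hand side).

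To prove the pointwise inequality I would argue by homogeneity and cases. Both sides are homogeneous of degree $p$ in $(a,b)$, so one may normalize. If $a$ and $b$ have the same sign, set $t=b/a\in[0,1]$ (WLOG $|b|\le|a|$, $a>0$): the inequality becomes $(1-t^{p-1})(1-t)\ge\frac2p(1-t^{p/2})^2$, a one-variable inequality provable by calculus (check at $t=1$ with derivatives, or write both sides via $\int$ representations in $t$). If $a$ and $b$ have opposite signs, say $a>0>b$, the left side is $(a^{p-1}+|b|^{p-1})(a+|b|)\ge a^p+|b|^p$, while the right side is $\frac2p(a^{p/2}+|b|^{p/2})^2\le\frac2p\cdot 2(a^p+|b|^p)=\frac4p(a^p+|b|^p)$, so it suffices that $a^p+|b|^p\ge\frac4p(\cdots)$, which for $p\ge4$ is immediate and for $2\le p<4$ needs the sharper bound $(a^{p-1}+|b|^{p-1})(a+|b|)\ge 2^{2-2/p}(a^{p/2}+|b|^{p/2})^2\cdot\frac1{\text{const}}$ — in fact the clean route is to observe $(a^{p-1}+|b|^{p-1})(a+|b|)\ge(a^{p/2}+|b|^{p/2})^2$ always (expand: $a^p+|b|^p+a^{p-1}|b|+a|b|^{p-1}\ge a^p+|b|^p+2a^{p/2}|b|^{p/2}$, i.e. $a^{p-1}|b|+a|b|^{p-1}\ge 2a^{p/2}|b|^{p/2}$, which is AM--GM), and $1\ge\frac2p$ for $p\ge2$. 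The same AM--GM observation actually handles the same-sign case too once one is careful, so the main obstacle is really just bookkeeping the constants and the borderline algebra; I expect the opposite-sign case with $2\le p<4$ to be the only place demanding a moment's thought, and the AM--GM trick dispatches it cleanly. This is exactly the argument in \citet{A-maximum-principle-Cordoba}, which I would cite for the details.
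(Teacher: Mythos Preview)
The paper does not prove this lemma: it is stated with citations to \cite{A-maximum-principle-Cordoba,Guo-2015-Partial-Differential-Equations} and used as a black box. Your outline is precisely the standard Cordoba--Cordoba argument from those references, and it is correct in substance.

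Two small remarks. First, the constant matching you flag as a concern is in fact automatic: by Plancherel, $\int(\Lambda^{\alpha/2}g)^2\,\mathrm{d}x=\int g\,\Lambda^\alpha g\,\mathrm{d}x$, so the right-hand side symmetrizes with the \emph{same} constant $c_{d,\alpha}/2$ as the left, and the pointwise inequality with factor $2/p$ transfers directly. Second, in the opposite-sign case you wrote the right-hand side as $\tfrac{2}{p}(a^{p/2}+|b|^{p/2})^2$; the lemma actually needs only $\tfrac{2}{p}(|a|^{p/2}-|b|^{p/2})^2$, which is weaker, so your AM--GM argument proves more than required. Conversely, AM--GM does \emph{not} handle the same-sign case (there the cross terms go the wrong way); the clean route is your alternative suggestion of an integral representation: by Cauchy--Schwarz, $(a^{p/2}-b^{p/2})^2=\bigl(\int_b^a \tfrac{p}{2}s^{p/2-1}\,\mathrm{d}s\bigr)^2\le (a-b)\tfrac{p^2}{4(p-1)}(a^{p-1}-b^{p-1})$, and $\tfrac{4(p-1)}{p^2}\ge\tfrac{2}{p}$ for $p\ge2$.
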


\subsection{Reaction in a Chemotaxis-Free Environment} 
\label{The-Reaction-advection-diffusion-Case}

Here, we establish that, in the chemotaxis-free scenario, there exists a constant $C_0=C_0(\epsilon,q,\alpha,d,\rho_0)$, independent of $u$, such that $m(t) \geq C_0$ for all $t \geq 0$. Additionally, when $\rho_0$, $u$ and $q$ are fixed, the quantity $m(t)$ converges to $m_0$ as $\epsilon \rightarrow 0$.
To demonstrate this, consider equation \eqref{eq-4.1-k} with $\chi=0$:
\begin{equation}
    \label{eq-2.1-k}
    \left\{\begin{array}{l}
    \partial_{t} \rho+u \cdot \nabla \rho=-\Lambda^\alpha \rho-\epsilon \rho^{q}, \quad \alpha \in (1,2] \\
    \rho(x, 0)=\rho_0(x), \quad x \in \mathbb{R}^d \quad d \geq 2.
    \end{array}\right.
\end{equation}

Note that, by the comparison principle, $\rho \leq b$, where  
\begin{equation}
    \label{eq-2.2-k}
    \left\{\begin{array}{l}
    \partial_{t} b+u \cdot \nabla b=-\Lambda^\alpha b, \quad \alpha \in (1,2] \\
    b(x, 0)=\rho_{0}(x), \quad x \in \mathbb{R}^d \quad d \geq 2.
   \end{array}\right.
\end{equation}

Also, since $\rho \geq 0$, we have
\begin{equation*}
    \frac{\mathrm{d}}{\mathrm{d} t}\|\rho(\cdot, t)\|_{L^{1}}=\frac{\mathrm{d}}{\mathrm{d} t} \int_{\mathbb{R}^{d}} \rho(x, t) \mathrm{~d} x=-\epsilon \int_{\mathbb{R}^{d}} \rho^{q}(x, t) \mathrm{~d} x \geq-\epsilon \int_{\mathbb{R}^{d}} b^{q}(x, t) \mathrm{~d} x.
\end{equation*}

The behavior of the $L^{q}$ norm of $b$ can be used for estimating the decay of the $L^{1}$ norm of $\rho$. 

\begin{lemma} \label{lemma-2.1-k}
There exists $C=C(d,\alpha)$ (independent of the flow $u$) such that
\begin{equation}
    \label{eq-2.3-k}
    \|b(\cdot, t)\|_{L^{2}} \leq \min \left(\|b_{0}\|_{L^{2}}, C t^{-d / 2 \alpha}\|b_{0}\|_{L^{1}}\right), \quad\|b(\cdot, t)\|_{L^{\infty}} \leq \min \left(\|b_{0}\|_{L^{\infty}}, C t^{-d / \alpha}\|b_{0}\|_{L^{1}}\right).
\end{equation} 
\end{lemma}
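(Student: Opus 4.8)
The plan is to treat \eqref{eq-2.2-k} as a linear drift--diffusion equation whose solution operator enjoys the standard hypercontractive (Nash-type) smoothing, the point being that the drift $u$ never enters the differential inequalities, so every constant depends only on $d$ and $\alpha$.

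\textbf{Step 1 (monotonicity of $L^p$ norms).} For $p\in[2,\infty)$, multiply \eqref{eq-2.2-k} by $|b|^{p-2}b$ and integrate. Since $|b|^{p-2}b\,\nabla b=\tfrac1p\nabla(|b|^p)$ and $\nabla\cdot u=0$, the advection term integrates to zero; by \Cref{Cordoba_Cordoba} the term coming from $\Lambda^\alpha$ is nonnegative. Hence $\tfrac{d}{dt}\|b(\cdot,t)\|_{L^p}^p\le 0$, and letting $p\to\infty$ (or using the pointwise maximum principle for \eqref{eq-2.2-k} directly) also $\|b(\cdot,t)\|_{L^\infty}\le\|b_0\|_{L^\infty}$. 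Likewise $\|b(\cdot,t)\|_{L^1}\le\|b_0\|_{L^1}$ (in fact conserved, since $\int\Lambda^\alpha b=\int u\cdot\nabla b=0$). This gives the first entry of each $\min$ in \eqref{eq-2.3-k}.

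\textbf{Step 2 ($L^1\to L^2$ decay).} Taking $p=2$ in Step 1 and using $\int b\,\Lambda^\alpha b\,dx=\|\Lambda^{\alpha/2}b\|_{L^2}^2$, we get $\tfrac{d}{dt}\|b\|_{L^2}^2\le-2\|\Lambda^{\alpha/2}b\|_{L^2}^2$. Combine this with the fractional Nash inequality $\|f\|_{L^2}^{1+\alpha/d}\le C(d,\alpha)\,\|\Lambda^{\alpha/2}f\|_{L^2}\,\|f\|_{L^1}^{\alpha/d}$ (proved by splitting $\int|\widehat f|^2$ over $\{|\xi|\le R\}$ and $\{|\xi|>R\}$ and optimizing in $R$) and with $\|b(\cdot,t)\|_{L^1}\le\|b_0\|_{L^1}$. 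Writing $y(t)=\|b(\cdot,t)\|_{L^2}^2$, this yields $y'\le-C(d,\alpha)\,\|b_0\|_{L^1}^{-2\alpha/d}\,y^{1+\alpha/d}$, whose integration gives $y(t)\le C(d,\alpha)\,t^{-d/\alpha}\|b_0\|_{L^1}^2$, i.e. the second entry of the first $\min$.

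\textbf{Step 3 ($L^1\to L^\infty$ decay).} Let $S(t,s)$ be the solution operator of \eqref{eq-2.2-k}. By Steps 1--2 applied on $[s,t]$, $\|S(t,s)\|_{L^1\to L^2}\le C(t-s)^{-d/2\alpha}$, uniformly in $s$ and $u$. The formal adjoint of \eqref{eq-2.2-k} is the backward equation with drift $-u$, still divergence-free, so the same argument gives $\|S(t,s)^\ast\|_{L^1\to L^2}\le C(t-s)^{-d/2\alpha}$, hence by duality $\|S(t,s)\|_{L^2\to L^\infty}\le C(t-s)^{-d/2\alpha}$. Composing over $[0,t/2]$ and $[t/2,t]$ gives
\[
\|b(\cdot,t)\|_{L^\infty}\le\|S(t,t/2)\|_{L^2\to L^\infty}\,\|S(t/2,0)\|_{L^1\to L^2}\,\|b_0\|_{L^1}\le C(d,\alpha)\,t^{-d/\alpha}\|b_0\|_{L^1},
\]
which with Step 1 yields the second $\min$. (Alternatively one avoids duality and runs a Moser iteration on $\|b\|_{L^p}$ as $p\to\infty$, exploiting the factor $2/p$ in \Cref{Cordoba_Cordoba} and the fractional Nash inequality at each level; the constants again converge to a finite limit independent of $u$.)

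\textbf{Main obstacle.} The only delicate point is Step 3: since $u$ depends on $t$ one must work with the two-parameter family $S(t,s)$ and its adjoint rather than a semigroup, and verify that the constant remains independent of $u$ — which holds because $u$ enters only through the identity $\nabla\cdot u=0$ and never appears in any of the differential inequalities. Steps 1 and 2 are routine once \Cref{Cordoba_Cordoba} and the fractional Nash inequality are available.
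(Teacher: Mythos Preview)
Your proof is correct and follows essentially the same route as the paper's: energy estimate plus a fractional Nash inequality for the $L^1\to L^2$ decay, then a duality argument (the paper writes out the backward equation for $\theta$ explicitly, which is exactly your $S(t,s)^\ast$ acting on $\theta_0$) to upgrade to $L^1\to L^\infty$. The only cosmetic difference is that the paper derives the Nash inequality via Hardy--Littlewood--Sobolev plus interpolation rather than your Fourier-side splitting, and cites the parabolic maximum principle directly for $\|b\|_{L^\infty}\le\|b_0\|_{L^\infty}$ instead of passing $p\to\infty$.
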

\begin{proof} 
We proceed by multiplying equation \eqref{eq-2.2-k} by the variable $b$, integrating the resulting equation, and using the incompressibility of the velocity field $u$ to obtain 
\begin{equation}
    \label{eq-Lemma-6.3-1}
    \frac{1}{2} \frac{\mathrm{d}}{\mathrm{d} t}\|b\|_{L^{2}}^{2} =-\| \Lambda^{\frac{\alpha}{2}} b \|_{L^2}^2,
\end{equation}
where we use the Plancherel theorem to write 
\begin{equation*}
    \int_{\mathbb{R}^d} b \Lambda^{\alpha} b \mathrm{~d} x = \| \Lambda^{\frac{\alpha}{2}} b \|_{L^2}^2,
\end{equation*}
since the fractional Laplacian, $\Lambda^{\alpha}$, is defined in Fourier variables as \eqref{eq-definition-fractional-Laplacian-fourier}.  
Now, consider the Riesz potential $I_{\sfrac{\alpha}{2}}=\Lambda^{-\frac{\alpha}{2}}$ on $\mathbb{R}^d$ and let $r$ be defined as $\frac{1}{r}=\frac{1}{p}-\frac{\alpha}{2d}$, where $p$ and $r$ are related by the Hardy-Littlewood-Sobolev fractional integration theorem. This theorem, applicable for $0<\alpha<2d$ and $1<p<r<\infty$, establishes the existence of a constant $C$ (dependent solely on $p$) such that $\| I_{\sfrac{\alpha}{2}} f \|_{L^r} \leq C\| f \|_{L^p}$ is satisfied for suitable functions.  
Then, for $f=\Lambda^{\frac{\alpha}{2}} b$ and $p=2$, we derive the inequality $\displaystyle \| b\|_{L^{\frac{2d}{d-\alpha}}} \leq C\| \Lambda^{\frac{\alpha}{2}} b \|_{L^2}$ for $\alpha \neq d$. 
Next, from interpolation inequality for $L^p$ norms, we see that $\displaystyle  \|b\|_{L^{2}}^{1+\frac{\alpha}{d}} \leq \|b\|_{L^{1}}^{\frac{\alpha}{d}} \| b \|_{L^{\frac{2d}{d-\alpha}}}$. 
%
Therefore,
\begin{equation}    
    \label{eq-2.3-k-2}
    \| b \|_{L^2}^{1+\frac{\alpha}{d}} \leq C \| b\|_{L^1}^{\frac{\alpha}{d}} \| \Lambda^{\frac{\alpha}{2}} b \|_{L^2}.
\end{equation}
Note that, for $\alpha=2$, we can use Nash's inequality, which yields \eqref{eq-2.3-k-2} directly: $\|b\|_{L^2}^{1+\frac{2}{d}} \leq C\|b\|_{L^1}^{\frac{2}{d}}\|\nabla b\|_{L^2}$. Therefore, for $\alpha=d=2$, \eqref{eq-2.3-k-2} still follows. 

Now, rewriting \eqref{eq-Lemma-6.3-1} with inequality \eqref{eq-2.3-k-2}, we obtain 
\begin{equation*}
    \frac{1}{2} \frac{\mathrm{d}}{\mathrm{d} t}\|b\|_{L^{2}}^{2} \leq -C \frac{\|b\|_{L^{2}}^{2+\frac{2\alpha}{d}}}{\|b\|_{L^{1}}^{\frac{2\alpha}{d}}}=-C \frac{\|b\|_{L^{2}}^{2+\frac{2\alpha}{d}}}{\|b_{0}\|_{L^{1}}^{\frac{2\alpha}{d}}}, 
\end{equation*}
with the final step obtained from the conservation of the $L^{1}$ norm of $b$. Then, setting $z(t)=\|b(\cdot, t)\|_{L^{2}}^{2}$, we have
\begin{equation*}
    z^{\prime}(t) \leq-C z(t)^{1+\frac{\alpha}{d}}\|b_{0}\|_{L^{1}}^{-\frac{2\alpha}{d}},
\end{equation*}
and solving this differential inequality, we find that
\begin{equation*}
    z(t) \leq\left(\frac{\alpha C t}{d\|b_{0}\|_{L^{1}}^{2 \alpha / d}}+\frac{1}{\|b_{0}\|_{L^{2}}^{2 \alpha / d}}\right)^{-d / \alpha},
\end{equation*}
which implies 
the first inequality in \eqref{eq-2.3-k}.

The second inequality in \eqref{eq-2.3-k} emerges via a duality argument using the incompressibility of the velocity field $u$. This approach involves considering $\theta(x, s)$, which is a solution to 
\begin{equation*}
    \partial_{s} \theta+u(x, t-s) \cdot \nabla \theta=-\Lambda^\alpha \theta, \quad \theta(x, 0)=\theta_{0}(x) \in \mathcal{S}.
\end{equation*} 
Note that $\theta(x, s)$ also satisfies the first estimate in \eqref{eq-2.3-k}, and we have 
\begin{equation}
    \label{eq-2.4-k}
    \frac{d}{d s} \int_{\mathbb{R}^{d}} b(x, s) \theta(x, t-s) \mathrm{~d} x=0,
\end{equation} 
which stems from the self-adjoint property of $\Lambda^\alpha$ leading to the equivalence of integrals
\begin{equation*}
    \int_{\mathbb{R}^{d}} \Lambda^\alpha b(x, s)  \theta(x, t-s) \mathrm{~d} x=\int_{\mathbb{R}^{d}} b(x, s) \Lambda^\alpha \theta(x, t-s) \mathrm{~d} x.
\end{equation*} 

Then, considering first $s=t$ and then $s=t/2$ on \eqref{eq-2.4-k}, we obtain 
\begin{equation*}
    \begin{split}
        \|b(\cdot, t)\|_{L^{\infty}}&=\sup _{\substack{\|\theta_{0}\|_{L^{1}} = 1 }}\left|\int_{\mathbb{R}^{d}} b(x,t) \theta_{0}(x)  \mathrm{~d} x\right|\\ 
        &=\sup _{\substack{\|\theta_{0}\|_{L^{1}} = 1 }}\left|\int_{\mathbb{R}^{d}}  b(x,t/2) \theta(x,t/2) \mathrm{~d} x\right|\\
        & \leq \sup _{\substack{\|\theta_{0}\|_{L^{1}} = 1 }} \|b(\cdot, t/2)\|_{L^{2}} \|\theta(\cdot, t/2)\|_{L^{2}}\\ 
         & \leq \sup _{\substack{\|\theta_{0}\|_{L^{1}} = 1 }} C (t/2)^{-d / 2\alpha} \|b_{0}\|_{L^{1}}  (t/2)^{-d / 2\alpha}  \|\theta_{0}\|_{L^{1}}\\
         & \leq C(d,\alpha) \ t^{-d / \alpha}\|b_{0}\|_{L^{1}}.
    \end{split}    
\end{equation*}
Here we used the first inequality in \eqref{eq-2.3-k} and adjusted $C(d,\alpha)$. Finally, from the weak parabolic maximum principle, we have $\|b(\cdot, t)\|_{L^{\infty}} \leq \|b_{0}\|_{L^{\infty}}$ \citep{User-guide-fractional-Laplacian,New-developments-nonlocal-operators-I,Hopf-lemmas-for-parabolic-fractional-Laplacians}. 
\end{proof}

\begin{lemma} \label{Lemma-2.2-k}
    Assume that $\rho$ solves \eqref{eq-2.1-k} with a smooth, bounded, and divergence-free $u$, and $\rho_{0} \in \mathcal{S}$. Then, for every $t>0$, we have
    \begin{equation*}
        \frac{\|\rho(\cdot, t)\|_{L^{p}}}{\|\rho(\cdot, t)\|_{L^{1}}} \leq \frac{\|\rho_{0}\|_{L^{p}}}{\|\rho_{0}\|_{L^{1}}} \; \; \text{ for all } \; \; 1 \leq p \leq \infty.
    \end{equation*}
\end{lemma}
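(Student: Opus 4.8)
The plan is to show that the ratio $\|\rho(\cdot,t)\|_{L^p}/\|\rho(\cdot,t)\|_{L^1}$ is non-increasing in $t$ for each fixed $p$, by differentiating $\log\|\rho(\cdot,t)\|_{L^p}$ and $\log\|\rho(\cdot,t)\|_{L^1}$ separately and comparing. First I would treat the case $p<\infty$ directly (the case $p=\infty$ will follow by passing to the limit $p\to\infty$, using that $\|\rho(\cdot,t)\|_{L^p}\to\|\rho(\cdot,t)\|_{L^\infty}$ and that the estimate is uniform in $p$). Using $\rho\ge 0$ from \Cref{Theorem-nonnegative-solution} and the smoothness/decay of $\rho$ from \Cref{Theorem-4.1-k}, I would compute
\begin{equation*}
\frac{\mathrm{d}}{\mathrm{d}t}\|\rho\|_{L^p}^p = p\int_{\mathbb{R}^d}\rho^{p-1}\partial_t\rho\,\mathrm{d}x
= p\int_{\mathbb{R}^d}\rho^{p-1}\bigl(-u\cdot\nabla\rho - \Lambda^\alpha\rho - \epsilon\rho^q\bigr)\,\mathrm{d}x.
\end{equation*}
The transport term vanishes because $\nabla\cdot u=0$ (writing $\rho^{p-1}\,u\cdot\nabla\rho = \tfrac{1}{p}u\cdot\nabla(\rho^p)$ and integrating by parts). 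For the fractional dissipation term I would invoke \Cref{Cordoba_Cordoba}: since $\rho\ge 0$, $\int \rho^{p-1}\Lambda^\alpha\rho\,\mathrm{d}x = \int |\rho|^{p-2}\rho\,\Lambda^\alpha\rho\,\mathrm{d}x \ge \tfrac{2}{p}\int(\Lambda^{\alpha/2}\rho^{p/2})^2\,\mathrm{d}x \ge 0$, so this term only helps (is $\le 0$). Thus
\begin{equation*}
\frac{1}{p}\frac{\mathrm{d}}{\mathrm{d}t}\|\rho\|_{L^p}^p \le -\epsilon\int_{\mathbb{R}^d}\rho^{p-1+q}\,\mathrm{d}x,
\end{equation*}
which gives $\frac{\mathrm{d}}{\mathrm{d}t}\log\|\rho\|_{L^p} \le -\epsilon\,\|\rho\|_{L^p}^{-p}\int\rho^{p+q-1}\,\mathrm{d}x$.

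Next I would do the same for $p=1$: from \eqref{monotone-decreasing} in \Cref{Proposition-monotone-decreasing-behavior}, $\frac{\mathrm{d}}{\mathrm{d}t}\|\rho\|_{L^1} = -\epsilon\int\rho^q\,\mathrm{d}x$, so $\frac{\mathrm{d}}{\mathrm{d}t}\log\|\rho\|_{L^1} = -\epsilon\,\|\rho\|_{L^1}^{-1}\int\rho^q\,\mathrm{d}x$. Therefore
\begin{equation*}
\frac{\mathrm{d}}{\mathrm{d}t}\log\frac{\|\rho\|_{L^p}}{\|\rho\|_{L^1}} \le \epsilon\left(\frac{\int\rho^q\,\mathrm{d}x}{\int\rho\,\mathrm{d}x} - \frac{\int\rho^{p+q-1}\,\mathrm{d}x}{\int\rho^p\,\mathrm{d}x}\right),
\end{equation*}
and it remains to show the right-hand side is $\le 0$, i.e.
\begin{equation*}
\frac{\int\rho^{p+q-1}\,\mathrm{d}x}{\int\rho^p\,\mathrm{d}x} \ge \frac{\int\rho^q\,\mathrm{d}x}{\int\rho\,\mathrm{d}x}.
\end{equation*}
This is a correlation-type inequality: with $q-1\ge 0$, the function $x\mapsto\rho(x)^{q-1}$ and the measure $\rho(x)^{p-1}\,\mathrm{d}x$ versus the measure $\mathrm{d}x$ are "aligned," and the claim follows from the FKG/Chebyshev correlation inequality (or directly: $\int\int(\rho(x)^{q-1}-\rho(y)^{q-1})(\rho(x)^{p-1}-\rho(y)^{p-1})\rho(x)\rho(y)\,\mathrm{d}x\,\mathrm{d}y\ge0$ since both factors have the same sign, then expand). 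Integrating $\frac{\mathrm{d}}{\mathrm{d}t}\log\frac{\|\rho\|_{L^p}}{\|\rho\|_{L^1}}\le 0$ from $0$ to $t$ yields the stated bound for $1\le p<\infty$, and letting $p\to\infty$ gives the case $p=\infty$.

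The main obstacle is making the differentiation under the integral sign and the integration by parts fully rigorous — one needs enough decay of $\rho$ and $\nabla\rho$ (and control of $\Lambda^\alpha\rho$) to justify $\frac{\mathrm{d}}{\mathrm{d}t}\int\rho^p = \int\partial_t(\rho^p)$ and to discard boundary terms; this is where the hypothesis $\rho_0\in\mathcal{S}$ together with the smoothing and $K_{s,\beta}$-regularity from \Cref{Theorem-4.1-k} is used. A secondary, but genuinely necessary, point is the correlation inequality above, which is elementary but must be stated carefully for the case $q=1$ (where it is an equality) versus $q>1$. The limiting argument $p\to\infty$ also deserves a line: since $\|\rho(\cdot,t)\|_{L^p}\nearrow\|\rho(\cdot,t)\|_{L^\infty}$ as $p\to\infty$ for each fixed $t$ (as $\rho(\cdot,t)\in L^1\cap L^\infty$), the inequality passes to the limit.
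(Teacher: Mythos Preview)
Your proof is correct and follows essentially the same route as the paper: differentiate the $L^p$ and $L^1$ norms, drop the transport term by incompressibility, use \Cref{Cordoba_Cordoba} to discard the fractional dissipation with a favorable sign, and reduce to the inequality $\int\rho^q\int\rho^p \le \int\rho\int\rho^{p+q-1}$, with $p=\infty$ handled by a limit. The only cosmetic differences are that the paper differentiates the ratio directly (quotient rule) rather than its logarithm, and obtains the key product inequality via two applications of H\"older's interpolation inequality rather than your symmetrization argument.
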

\begin{proof}
    Here, for the sake of completeness, we provide a proof similar to the one presented by \citet{Kiselev-Biomixing}, but considering the Fractional Laplacian. 
    To start, note that for $p=1$ the result is immediate. Moving on to the case of $1<p<\infty$, we use \eqref{monotone-decreasing}, that is, $\displaystyle \frac{\mathrm{d}}{\mathrm{d} t}\|\rho(\cdot, t)\|_{L^{1}}=- \epsilon \|\rho(\cdot, t)\|_{L^{q}}^q$, to find
    \begin{equation*}
        \|\rho(\cdot, t)\|_{L^{1}}^{2} \frac{\mathrm{d}}{\mathrm{d} t} \left(\frac{\|\rho(\cdot, t)\|_{L^{p}}}{\|\rho(\cdot, t)\|_{L^{1}}}\right)
        =  \|\rho(\cdot, t)\|_{L^{1}} \frac{\mathrm{d}}{\mathrm{d} t}\|\rho(\cdot, t)\|_{L^{p}} +\epsilon \|\rho(\cdot, t)\|_{L^{q}}^q \|\rho(\cdot, t)\|_{L^{p}}, 
    \end{equation*}
    and from \Cref{Cordoba_Cordoba}, we have
    \begin{equation}
        \label{eq-Lemma-2.2-k-0}
        \begin{split}
            \frac{\mathrm{d}}{\mathrm{d} t}\|\rho(\cdot, t)\|_{L^{p}}
             & = \|\rho(\cdot, t)\|_{L^{p}}^{1-p} \int_{\mathbb{R}^d} \rho^{p-1}\left(-u \cdot \nabla \rho-\Lambda^{\alpha} \rho-\epsilon \rho^q\right) \mathrm{~d} x \\
             &\leq \|\rho(\cdot, t)\|_{L^{p}}^{1-p} \left( -\frac{2}{p}\| \Lambda^{\alpha/2} \rho^{p/2} \|_{L^2}^2 -\epsilon \int_{\mathbb{R}^d} \rho^{q+p-1} \mathrm{~d} x\right).
        \end{split}
    \end{equation}
    Therefore,
    \begin{multline}
        \label{eq-Lemma-2.2-k}
        \|\rho(\cdot, t)\|_{L^{1}}^{2} \frac{\mathrm{d}}{\mathrm{d} t} \frac{\|\rho(\cdot, t)\|_{L^{p}}}{\|\rho(\cdot, t)\|_{L^{1}}}
        \leq \|\rho(\cdot, t)\|_{L^{p}}^{1-p} \left[ -\frac{2}{p}\|\rho(\cdot, t)\|_{L^{1}} \| \Lambda^{\alpha/2} \rho^{p/2} \|_{L^2}^2  \right. \\
        +\left.\epsilon \left(\|\rho(\cdot, t)\|_{L^{q}}^q \|\rho(\cdot, t)\|_{L^{p}}^p-\|\rho(\cdot, t)\|_{L^{1}} \int_{\mathbb{R}^d} \rho^{q+p-1} \mathrm{~d} x \right) \right].
    \end{multline}

    Now, by applying Hölder's inequality, we can observe that the expression
    
    \begin{equation*}
        \int_{\mathbb{R}^d} \rho^q \mathrm{~d} x \int_{\mathbb{R}^d} \rho^p \mathrm{~d} x -\int_{\mathbb{R}^d} \rho \mathrm{~d} x \int_{\mathbb{R}^d} \rho^{q+p-1} \mathrm{~d} x 
    \end{equation*}
    is less than or equal to zero. The case $p=\infty$ follows from a limiting procedure, since $\rho \in \mathcal{S}$ for all $t$.
\end{proof}
\vspace{0.1cm}

Now we prove \Cref{Theorem-1.1-k}. 

\vspace{0.1cm}
\begin{proof}[Proof of \Cref{Theorem-1.1-k}]
    As in \cite{Kiselev-Biomixing}, the idea here is to show that if the $L^{1}$ norm of $\rho$ at some time $t_{0}$ is sufficiently small then, for all times $t>t_{0}$, the $L^{1}$ norm of $\rho$ cannot drop below $\|\rho(\cdot, t_{0})\|_{L^{1}} / 2$. This shows that $\rho(x, t)$ cannot tend to zero as $t \rightarrow+\infty$.
    For this, recall that, for every $t$,
    \begin{equation*}
        \frac{\mathrm{d}}{\mathrm{d} t} \int_{\mathbb{R}^{d}} \rho(x, t) \mathrm{~d} x=-\epsilon \int_{\mathbb{R}^{d}} \rho(x, t)^{q} \mathrm{~d} x \geq-\epsilon \int_{\mathbb{R}^{d}} b(x, t)^{q} \mathrm{~d} x,
    \end{equation*}
    where $b$ is given by \eqref{eq-2.2-k}. From \Cref{lemma-2.1-k} and Hölder's inequality,
    \begin{equation*}
        \int_{\mathbb{R}^{d}} b(x, t)^{q} \mathrm{~d} x 
        \leq C \min \left(\|\rho_{0}\|_{L^{\infty}}^{q-1} \|\rho_{0}\|_{L^{1}}, t^{-\frac{d(q-1)}{\alpha}}\|\rho_{0}\|_{L^{1}}^{q}\right). 
    \end{equation*}

    Thus, for every $\tau>0$,
    \begin{equation*}
        \begin{aligned}
            \int_{t_{0}}^{\infty} \int_{\mathbb{R}^{d}} b(x, t)^{q} \mathrm{~d} x \mathrm{d} t & =\int_{t_{0}}^{t_{0}+\tau} \int_{\mathbb{R}^{d}} b(x, t)^{q} \mathrm{~d} x \mathrm{d} t +\int_{t_{0}+\tau}^{\infty} \int_{\mathbb{R}^{d}} b(x, t)^{q} \mathrm{~d} x \mathrm{d} t\\
            & \leq C(d, \alpha)\left(\|\rho(\cdot, t_{0})\|_{L^{\infty}}^{q-1}\|\rho(\cdot, t_{0})\|_{L^{1}} \tau+\|\rho(\cdot, t_{0})\|_{L^{1}}^{q} \int_{t_{0}+\tau}^{\infty}\left(t-t_{0}\right)^{-\frac{d(q-1)}{\alpha}} \mathrm{d} t\right),
        \end{aligned}
    \end{equation*}
    and, as by assumption $q d>d+\alpha$, we have
    \begin{equation}
        \label{eq-2.5-k}
        \begin{aligned}
            \int_{t_{0}}^{\infty} \int_{\mathbb{R}^{d}} b(x, t)^{q} \mathrm{~d} x \mathrm{d} t \leq C(d, \alpha, q)\left(\|\rho(\cdot, t_{0})\|_{L^{\infty}}^{q-1}\|\rho(\cdot, t_{0})\|_{L^{1}} \tau+\|\rho(\cdot, t_{0})\|_{L^{1}}^{q} \tau^{\frac{d+\alpha-q d}{\alpha}}\right).
        \end{aligned}
    \end{equation}

    Assume, on the contrary, that the $L^{1}$ norm of $\rho$ does go to zero for some $u$ and $\rho_0$. Then, consider some time $t_{0}>0$ when $\|\rho(\cdot, t_{0})\|_{L^{1}}$ is sufficiently small. By \Cref{Lemma-2.2-k} and \eqref{eq-2.5-k}, we see that further decrease of the $L^{1}$ norm from that level is bounded as 
    \begin{equation}
        \label{eq-2.6-k}
        \|\rho(\cdot, t_{0})\|_{L^{1}}-\|\rho(\cdot, t)\|_{L^{1}} \leq C(d, \alpha, q) \epsilon\left(\frac{\|\rho_{0}\|_{L^{\infty}}^{q-1}}{\|\rho_{0}\|_{L^{1}}^{q-1}}\|\rho(\cdot, t_{0})\|_{L^{1}}^{q} \tau+\|\rho(\cdot, t_{0})\|_{L^{1}}^{q} \tau^{\frac{d+\alpha-q d}{\alpha}}\right)
    \end{equation}
    for all $t>t_{0}$, $\tau>0$. Choosing $\tau$ to minimize expression \eqref{eq-2.6-k}, for every $t>t_{0}$, we find that
    \begin{equation*}
        \|\rho(\cdot, t_{0})\|_{L^{1}}-\|\rho(\cdot, t)\|_{L^{1}} \leq C(d, \alpha, q) \epsilon\|\rho(\cdot, t_{0})\|_{L^{1}}^{q}\left(\frac{\|\rho_{0}\|_{L^{\infty}}}{\|\rho_{0}\|_{L^{1}}}\right)^{\frac{q d-d-\alpha}{d}}.
    \end{equation*}

    If $\, \|\rho(\cdot, t)\|_{L^{1}} \rightarrow 0$ as $t \rightarrow+\infty$, we may choose $t_{0}$ so that
    \begin{equation}
        \label{eq-2.8-k}
        C(d, \alpha, q) \epsilon\|\rho(\cdot, t_{0})\|_{L^{1}}^{q-1}\left(\frac{\|\rho_{0}\|_{L^{\infty}}}{\|\rho_{0}\|_{L^{1}}}\right)^{\frac{q d-d-\alpha}{d}} \leq \frac{1}{2}.
    \end{equation}

    Then we get a contradiction to the assumption that $\|\rho(\cdot, t)\|_{L^1} \rightarrow 0$ as $t \rightarrow +\infty$, as 
    \begin{equation*}
        \|\rho(\cdot, t)\|_{L^1} \geq \frac{1}{2}\|\rho(\cdot, t_{0})\|_{L^1} \geq C_0\left(q, d,\epsilon,\alpha, \rho_0\right)
    \end{equation*}
    for every $t>t_0$, where $C_0$ in the statement of the theorem can be defined as
    \begin{equation*}
        C_0\left(q, d,\epsilon,\alpha, \rho_0\right) \equiv \min \left(\frac{1}{2}\|\rho_0\|_{L^1}, \frac{1}{2^{\frac{q}{q-1}} \epsilon^{\frac{1}{q-1}} C(q, d, \alpha)^{\frac{1}{q-1}}}\left(\frac{\|\rho_0\|_{L^1}}{\|\rho_0\|_{L^{\infty}}}\right)^{1-\frac{\alpha}{d(q-1)}}\right).
    \end{equation*}
    
    Note that, if $\epsilon \rightarrow 0$ while $\rho_{0}$, $u$ and $q$ are fixed, we can replace condition $\leq \frac{1}{2}$ in \eqref{eq-2.8-k} with $\leq \kappa$, where $\kappa$ can be taken as small as desired, proving the last statement of the theorem.
\end{proof}

\subsection{Reaction in a Chemotactic Environment} 
\label{Sec:Reaction-Enhancement-Chemotaxis}

In the chemotactic environment, we prove that the large time limit of the $L^{1}$ norm of $\rho$ tends to zero as chemotaxis coupling increases, with an upper bound independent of $\epsilon$ (\Cref{Theorem-4.2-k}). 
However, we establish lower bounds for the $L^{1}$ norm of the solution, showing that, for each fixed coupling, $\|\rho(\cdot, t)\|_{L^{1}}$ does not go to zero as $t \rightarrow \infty$, as outlined next.

\subsubsection*{Lower bound for the total fraction of unfertilized eggs} \label{Subsec:Lower-bound-fraction-for-m(t)}
\addcontentsline{toc}{subsubsection}{\ \ \ \ Lower bound for the total fraction of unfertilized eggs}

We proceed by setting a lower bound for the $L^1$ norm of $\rho$. 

\begin{theorem}\label{Theorem-4.3-k}  
    Let $d=2$, $q, s>2$ be integers, $\alpha \in (1,2)$, and $\beta$ satisfy $0\leq \beta < \alpha$.   Suppose that $u \in C^{\infty}(\mathbb{R}^d \times[0, \infty))$ is divergence free,  and $\rho$ solves \eqref{eq-4.1-k} with $\rho_0 \geq 0 \in K_{s, \beta}$. 
    Then,  $\underset{t \rightarrow \infty}{\lim}\|\rho(\cdot, t)\|_{L^{1}}>0$.
\end{theorem}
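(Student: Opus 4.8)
\emph{Proof proposal.} The plan is to run the contradiction scheme of \Cref{Theorem-1.1-k}, the only genuinely new input being a way to replace the comparison with the linear solution $b$ of \eqref{eq-2.2-k} (which is unavailable once $\chi\neq0$) by a direct decay estimate. By \Cref{Theorem-4.1-k} the solution is global and smooth, by \Cref{Theorem-nonnegative-solution} it is nonnegative, so $m(t)=\|\rho(\cdot,t)\|_{L^1}$, and by \Cref{Proposition-monotone-decreasing-behavior} we have $\frac{\mathrm d}{\mathrm dt}m(t)=-\epsilon\|\rho(\cdot,t)\|_{L^q}^q$, so $m$ is nonincreasing. Suppose $m(t)\to0$. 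Then I would pick a time $t_0$ at which $m(t_0)$ is so small that the total subsequent mass loss $\epsilon\int_{t_0}^\infty\|\rho(\cdot,t)\|_{L^q}^q\,\mathrm dt$ does not exceed $\tfrac12 m(t_0)$; this forces $m(t)\ge\tfrac12 m(t_0)>0$ for all $t\ge t_0$, contradicting $m(t)\to0$, and hence $\lim_{t\to\infty}m(t)>0$.

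\emph{The core estimate.} First, since $m$ is nonincreasing and $\|\rho(\cdot,t)\|_{L^\infty}\le N_0$ for all $t$ by \Cref{Lemma-5.6-k}, interpolation between $L^1$ and $L^\infty$ gives $\|\rho(\cdot,t)\|_{L^{2/\alpha}}\le m(t)^{\alpha/2}N_0^{1-\alpha/2}\le m(t_0)^{\alpha/2}N_0^{1-\alpha/2}$ for every $t\ge t_0$, so smallness of $m(t_0)$ is propagated forward. Next, multiplying \eqref{eq-4.1-k} by $\rho$, integrating, using $\nabla\cdot u=0$ to kill the advection term, \Cref{Cordoba_Cordoba} with $p=2$ for the dissipation, the reaction term having a favorable sign, and integrating the chemotactic term by parts twice (which produces $+\tfrac\chi2\|\rho\|_{L^3}^3$ because $\rho\ge0$), one obtains
\[ \tfrac12\tfrac{\mathrm d}{\mathrm dt}\|\rho\|_{L^2}^2\le-\|\Lambda^{\alpha/2}\rho\|_{L^2}^2+\tfrac{\chi}{2}\|\rho\|_{L^3}^3 . \]
For $d=2$ and $\alpha\in(1,2)$ one has $\tfrac2\alpha<3<\tfrac{4}{2-\alpha}$, so interpolating $L^3$ between $L^{2/\alpha}$ and $L^{4/(2-\alpha)}$ and using the Sobolev/Hardy--Littlewood--Sobolev inequality $\|\rho\|_{L^{4/(2-\alpha)}}\le C\|\Lambda^{\alpha/2}\rho\|_{L^2}$ already recorded in the proof of \Cref{lemma-2.1-k} gives $\|\rho\|_{L^3}^3\le C\|\rho\|_{L^{2/\alpha}}\|\Lambda^{\alpha/2}\rho\|_{L^2}^2$. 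Therefore, provided $\tfrac{C\chi}{2}\|\rho(\cdot,t)\|_{L^{2/\alpha}}\le\tfrac12$ for all $t\ge t_0$ --- which by the propagation step holds once $m(t_0)^{\alpha/2}N_0^{1-\alpha/2}$ is small enough --- the chemotactic term is absorbed and $\tfrac12\tfrac{\mathrm d}{\mathrm dt}\|\rho\|_{L^2}^2\le-\tfrac12\|\Lambda^{\alpha/2}\rho\|_{L^2}^2$. Feeding in the $d=2$ instance of \eqref{eq-2.3-k-2}, namely $\|\Lambda^{\alpha/2}\rho\|_{L^2}^2\ge C\|\rho\|_{L^2}^{2+\alpha}m(t)^{-\alpha}\ge C\|\rho\|_{L^2}^{2+\alpha}m(t_0)^{-\alpha}$, and solving the resulting ordinary differential inequality for $z(t)=\|\rho(\cdot,t)\|_{L^2}^2$ yields the linear fractional-heat decay
\[ \|\rho(\cdot,t)\|_{L^2}^2\le\min\!\big(\|\rho(\cdot,t_0)\|_{L^2}^2,\;C(t-t_0)^{-2/\alpha}\,m(t_0)^2\big),\qquad t\ge t_0 . \]

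\emph{Closing the argument.} Since $q>2$, $\|\rho\|_{L^q}^q\le N_0^{q-2}\|\rho\|_{L^2}^2$. Splitting $\int_{t_0}^\infty$ at $t_0+\tau$, using $\|\rho(\cdot,t_0)\|_{L^2}^2\le N_0\,m(t_0)$ on $[t_0,t_0+\tau]$ and the decay bound on $[t_0+\tau,\infty)$ --- whose tail $\int_{t_0+\tau}^\infty(t-t_0)^{-2/\alpha}\,\mathrm dt$ converges precisely because $\alpha<2$ --- and then optimizing in $\tau$, exactly as in the proof of \Cref{Theorem-1.1-k}, one gets
\[ \epsilon\int_{t_0}^\infty\|\rho(\cdot,t)\|_{L^q}^q\,\mathrm dt\le C(q,\alpha)\,\epsilon\,N_0^{\,q-2+(2-\alpha)/2}\,m(t_0)^{(2+\alpha)/2} . \]
Since $(2+\alpha)/2>1$, picking $m(t_0)$ below a threshold $\eta=\eta(\epsilon,q,\alpha,\chi,\rho_0)$ (taken small enough to also satisfy the absorption condition of the previous step) makes the right-hand side $\le\tfrac12 m(t_0)$, completing the contradiction and giving $\lim_{t\to\infty}m(t)>0$. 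Tracking the two thresholds produces the explicit $C_1(\epsilon,q,\alpha,\chi,\rho_0)$, which enters through $N_0=\max\big((\chi/\epsilon)^{1/(q-2)},\|\rho_0\|_{L^\infty}\big)$, is of the shape $\tfrac12\min(\|\rho_0\|_{L^1},\eta)$, is manifestly independent of $u$, and tends to $0$ as $\chi\to\infty$.

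\emph{Main obstacle.} The crux is the core estimate: subordinating the chemotactic aggregation term to the fractional dissipation. This is exactly the subcritical regime of the two-dimensional fractional Keller--Segel system, and it is what forces $d=2$, the strict inequality $\alpha<2$ (needed for convergence of the time-tail), and the appearance of $\chi$ inside the threshold; once this is in hand, the rest is a routine rerun of the chemotaxis-free argument of \Cref{Theorem-1.1-k}.
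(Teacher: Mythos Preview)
Your proof is correct, and it follows the same high-level strategy as the paper: assume $m(t)\to0$, show that once $m$ drops below a threshold the chemotactic term can be absorbed into the fractional dissipation in an $L^p$ energy estimate, deduce decay of that $L^p$ norm, and bound the total remaining mass loss to reach a contradiction. The difference is in which $L^p$ norm is tracked. The paper multiplies the equation by $\rho^{q-1}$ and works directly with $\|\rho\|_{L^q}^q$: the chemotactic term produces $\frac{(q-1)\chi}{q}\int\rho^{q+1}$, which is controlled via interpolation and Hardy--Littlewood--Sobolev applied to $\rho^{q/2}$, leading to a threshold on $\|\rho\|_{L^1}$ involving $N_0^{-(2-\alpha)/2}$; to close, the paper also needs to establish, within the proof, that the ratio $\|\rho\|_{L^q}/\|\rho\|_{L^1}$ is nonincreasing past $t_0$ (an analogue of \Cref{Lemma-2.2-k} in the chemotactic setting). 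Your route instead multiplies by $\rho$, works with $\|\rho\|_{L^2}^2$, interpolates $\|\rho\|_{L^3}^3\le C\|\rho\|_{L^{2/\alpha}}\|\Lambda^{\alpha/2}\rho\|_{L^2}^2$ (with the clean exponent $\theta=1/3$), and only at the end passes to $L^q$ via $\|\rho\|_{L^q}^q\le N_0^{q-2}\|\rho\|_{L^2}^2$. This sidesteps the ratio-monotonicity lemma entirely and keeps the interpolation indices simpler; the price is a slower decay exponent ($(t-t_0)^{-2/\alpha}$ for $\|\rho\|_{L^q}^q$ versus the paper's $(t-t_*)^{-2(q-1)/\alpha}$), but the tail integral still converges precisely because $\alpha<2$, so nothing is lost for the stated theorem. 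Both approaches yield a threshold depending on $N_0$, hence potentially on $\epsilon$, which is consistent with the paper's remark that for $1<\alpha<2$ one cannot assert $\epsilon$-independence of the lower bound.
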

\begin{proof}
In order to establish lower bounds on the $L^1$ norm of the solution $\rho$, let us deduce estimates on $\|\rho(\cdot,t)\|_{L^{q}}$. By multiplying \eqref{eq-4.1-k} by $\rho^{q-1}$ and integrating, we obtain
 \begin{equation}
     \label{eq-4.9-k}
     \frac{1}{q} \frac{\mathrm{d} }{\mathrm{d} t} \int_{\mathbb{R}^{2}} \rho^{q} \mathrm{d}  x=-\int_{\mathbb{R}^{2}} \rho^{q-1} \Lambda^{\alpha} \rho \mathrm{d}  x+\chi \int_{\mathbb{R}^{2}} \rho^{q-1} \nabla \cdot\left(\rho \nabla \Delta^{-1} \rho\right) \mathrm{d}  x-\epsilon \int_{\mathbb{R}^{2}} \rho^{2 q-1} \mathrm{d}  x.
 \end{equation}

Note that, from \Cref{Cordoba_Cordoba}, we have
\begin{equation}
    \label{Ineq-Th-4.3-01}
    -\int_{\mathbb{R}^2} \rho^{q-1} \Lambda^{\alpha} \rho \mathrm{~d} x \leq -\frac{2}{q}\| \Lambda^{\alpha/2} \rho^{q/2} \|_{L^2}^2,
\end{equation}
and using integration by parts, we find
\begin{equation}
    \label{Ineq-Th-4.3-02}
    \int_{\mathbb{R}^{2}} \rho^{q-1} \nabla \cdot\left(\rho \nabla \Delta^{-1} \rho\right)\mathrm{~d} x=-(q-1) \int_{\mathbb{R}^{2}} \rho^{q-1} \nabla \rho \cdot \nabla \Delta^{-1} \rho\mathrm{~d} x=\frac{q-1}{q} \int_{\mathbb{R}^{2}} \rho^{q+1}\mathrm{~d} x.
\end{equation}
Furthermore, from interpolation inequality for $L^p$ norms, we see that
\begin{equation} 
    \label{Ineq-Th-4.3-03}
    \begin{split}
        \int_{\mathbb{R}^{2}} \rho^{q+1}\mathrm{~d} x=\|\rho^{q / 2}\|_{L^{2+\frac{2}{q}}}^{\frac{2(q+1)}{q}} &\leq 
        C(q, \alpha)\| \rho^{q / 2} \|_{L^{\frac{4}{2-\alpha}}}^{\frac{4q}{2q-2+\alpha}} \|\rho^{q / 2}\|_{L^{\frac{2}{q}}}^{\frac{2}{q}\left(\frac{\alpha q-2+\alpha}{2q-2+\alpha}\right)}.
    \end{split}
\end{equation}

Now, employing Hardy-Littlewood-Sobolev fractional integration theorem, which states that, for $0<\upsilon<d$, $\| \Lambda^{-\upsilon} f \|_{L^r} \leq C\| f \|_{L^p}$,  where $C$ is a constant depending only on $p$, and $p$ and $r$ satisfy $\frac{1}{r}=\frac{1}{p}-\frac{\upsilon}{d}$ with $1<p<r<\infty$, we can derive that
\begin{equation}
    \label{Ineq-Th-4.3-04}
    \| \rho^{q / 2} \|_{L^{\frac{4}{2-\alpha}}} \leq C\| \Lambda^{\frac{\alpha}{2}} \rho^{q / 2} \|_{L^2}.
\end{equation} 
Moreover, by standard interpolation inequality for $L^p$ norms, we obtain 
\begin{equation}
    \label{Ineq-Th-4.3-05}
    \| \rho^{q / 2} \|_{L^{\frac{4}{2-\alpha}}}^{\frac{2}{q}}=\| \rho \|_{L^{\frac{2q}{2-\alpha}}} \leq \|\rho\|_{L^{1}}^{\frac{2-\alpha}{2q}} \| \rho \|_{L^{\infty}}^{\frac{2q-2+\alpha}{2q}} .
\end{equation}

Then, from \eqref{Ineq-Th-4.3-04} and \eqref{Ineq-Th-4.3-05}, we establish that
\begin{equation*}
    \| \rho^{q / 2} \|_{L^{\frac{4}{2-\alpha}}}^{\frac{4q}{2q-2+\alpha}} = \| \rho^{q / 2} \|_{L^{\frac{4}{2-\alpha}}}^{2} \| \rho^{q / 2} \|_{L^{\frac{4}{2-\alpha}}}^{\frac{2(2-\alpha)}{2q-2+\alpha}} \leq C \| \Lambda^{\frac{\alpha}{2}} \rho^{q / 2} \|_{L^2}^2 \|\rho\|_{L^{1}}^{\frac{(2-\alpha)^2}{2(2q-2+\alpha)}} \| \rho \|_{L^{\infty}}^{\frac{2-\alpha}{2}}. 
\end{equation*}
Applying this to \eqref{Ineq-Th-4.3-03}, we can write
\begin{equation*} 
    \begin{split}
        \int_{\mathbb{R}^{2}} \rho^{q+1}\mathrm{~d} x &\leq C(q,\alpha) \| \Lambda^{\frac{\alpha}{2}} \rho^{q / 2} \|_{L^2}^2 \|\rho\|_{L^{1}}^{\frac{(2-\alpha)^2}{2(2q-2+\alpha)}} \| \rho \|_{L^{\infty}}^{\frac{2-\alpha}{2}} \|\rho^{q / 2}\|_{L^{\frac{2}{q}}}^{\frac{2}{q}\left(\frac{\alpha q-2+\alpha}{2q-2+\alpha}\right)}\\
        &\leq C(q,\alpha) \| \Lambda^{\frac{\alpha}{2}} \rho^{q / 2} \|_{L^2}^2 \|\rho\|_{L^{1}}^{\frac{(2-\alpha)^2}{2(2q-2+\alpha)}+\frac{\alpha q-2+\alpha}{2q-2+\alpha}} \| \rho \|_{L^{\infty}}^{\frac{2-\alpha}{2}}.
    \end{split}    
\end{equation*}

Thus, from \eqref{Ineq-Th-4.3-01}, \eqref{Ineq-Th-4.3-02}, and the inequality above, we obtain 
\begin{equation*}
    \begin{split}
        & q\left( -\int_{\mathbb{R}^{2}} \rho^{q-1} \Lambda^{\alpha} \rho \mathrm{d}  x+\chi \int_{\mathbb{R}^{2}} \rho^{q-1} \nabla \cdot\left(\rho \nabla \Delta^{-1} \rho\right) \mathrm{d}  x \right)  \leq  \\
        & \hspace{5cm} \| \Lambda^{\frac{\alpha}{2}} \rho^{q/2} \|_{L^2}^2 \left( C(q,\alpha) \chi \|\rho\|_{L^{1}}^{\frac{\alpha}{2} \left(1+\frac{2}{2q-2+\alpha} \right)} \| \rho \|_{L^{\infty}}^{\frac{2-\alpha}{2}} -2 \right).
    \end{split}
\end{equation*}
Therefore, we can rewrite \eqref{eq-4.9-k} as 
\begin{equation}
     \label{eq-4.9-k-2}
    \frac{\mathrm{d} }{\mathrm{d} t} \int_{\mathbb{R}^{2}} \rho^{q} \mathrm{d}  x
    \leq   N_0^{\frac{2-\alpha}{2}} \| \Lambda^{\frac{\alpha}{2}} \rho^{q/2} \|_{L^2}^2 \left( C(q,\alpha) \chi \|\rho\|_{L^{1}}^{\frac{\alpha}{2} \left(1+\frac{2}{2q-2+\alpha} \right)}  - 2 N_0^{-\frac{2-\alpha}{2}} \right)-q\epsilon \int_{\mathbb{R}^{2}} \rho^{2 q-1} \mathrm{d}  x,
 \end{equation}
where $N_0$ is a uniform in time upper bound of $\|\rho(\cdot, t)\|_{L^{\infty}}$, \ie $\|\rho\|_{L^{\infty}\left(\mathbb{R}_{+} ; L^{\infty}\right)}:= \underset{t \in [0,\infty)}{\operatorname{ess} \sup}\|\rho(\cdot, t)\|_{L^{\infty}} \leq N_0$. 

Note that $N_0=\max \left((\chi / \epsilon)^{\frac{1}{q-2}},\|\rho_0\|_{L^{\infty}}\right) <\infty$ according to \Cref{Lemma-5.6-k}, and $\|\rho(\cdot,t)\|_{L^{1}}$ is non-increasing in time. 
Now supposing that, at some time $t_{0}$, $C(q,\alpha) \chi \|\rho\|_{L^{1}}^{\frac{\alpha}{2} \left(1+\frac{2}{2q-2+\alpha} \right)}$ drops below $2 N_0^{-\frac{2-\alpha}{2}}$, we also have
\begin{equation}
    \label{eq-assumption}
    C(q,\alpha) \chi \|\rho(\cdot,t)\|_{L^{1}}^{\frac{\alpha}{2} \left(1+\frac{2}{2q-2+\alpha} \right)} < 2 N_0^{-\frac{2-\alpha}{2}}, \qquad \forall \ t>t_{0}.
\end{equation}
Then, for all later times, we get
 \begin{equation}
    \label{eq-4.12-k}
    \frac{\mathrm{d} }{\mathrm{d} t} \int_{\mathbb{R}^{2}} \rho^{q} \mathrm{d}  x \leq  - N_0^{\frac{2-\alpha}{2}} \| \Lambda^{\frac{\alpha}{2}} \rho^{q/2} \|_{L^2}^2,  
\end{equation}
and, again from interpolation inequality for $L^p$ norms, we obtain 
\begin{equation*}
    \| \rho^{q/2} \|_{L^{2}} \leq C(q,\alpha) \| \rho^{q/2} \|_{L^{\frac{4}{2-\alpha}}}^{\frac{2(q-1)}{2q-2+\alpha}}\|\rho^{q/2}\|_{L^{\frac{2}{q}}}^{\frac{\alpha}{2q-2+\alpha}}.
\end{equation*}
Hence, from this and \eqref{Ineq-Th-4.3-04}, we have 
\begin{equation*}
    \| \rho^{q/2} \|_{L^{2}}^{1+\frac{\alpha}{2(q-1)}}  \leq C(q,\alpha) \| \Lambda^{\frac{\alpha}{2}} \rho^{q/2} \|_{L^2} \|\rho^{q/2}\|_{L^{2/q}}^{\frac{\alpha}{2(q-1)}},
\end{equation*}
which, applied to \eqref{eq-4.12-k}, leads to
\begin{equation}
    \label{eq-Theorem-4.3-k-1-1}
    \frac{\mathrm{d} }{\mathrm{d} t} \int_{\mathbb{R}^{2}} \rho^{q} \mathrm{~d}  x \leq  - C(q,\alpha) \ N_0^{\frac{2-\alpha}{2}} \left(\int_{\mathbb{R}^{2}} \rho^{q} \mathrm{~d} x\right)^{1+\frac{\alpha}{2}\left(\frac{1}{q-1}\right)}\left(\int_{\mathbb{R}^{2}} \rho \mathrm{~d} x\right)^{-\frac{\alpha}{2}\left(\frac{q}{q-1}\right)}.
\end{equation}

Then, using the fact that $\displaystyle \int_{\mathbb{R}^{2}} \rho(x, t) \mathrm{~d} x$ is monotone decreasing and introducing $z(t)=\|\rho(\cdot, t)\|_{L^{q}}^{q}$,  we can represent \eqref{eq-Theorem-4.3-k-1-1} as
\begin{equation*}
    z^{\prime}(t) \leq -C(q,\alpha)  \ N_0^{\frac{2-\alpha}{2}}  z(t)^{1+\frac{\alpha}{2}\left(\frac{1}{q-1}\right)} \|\rho(\cdot,t_{*})\|_{L^{1}}^{-\frac{\alpha}{2}\left(\frac{q}{q-1}\right)}
\end{equation*}
for $t_{*}\geq t_{0}$.
Solving this differential inequality, we find 
\begin{equation*}
    z(t) \leq\left(\frac{C(q,\alpha) \ N_0^{\frac{2-\alpha}{2}} (t-t_{*})}{\|\rho(\cdot,t_{*})\|_{L^{1}}^{\frac{\alpha}{2}\left(\frac{q}{q-1}\right)}}+\frac{1}{\|\rho(\cdot, t_{*})\|_{L^{q}}^{\frac{\alpha}{2}\left(\frac{q}{q-1}\right)}}\right)^{-\frac{2}{\alpha}\left(q-1\right)},
\end{equation*}
implying
\begin{equation}
    \label{eq-Theorem-4.3-k-1}
    \|\rho(\cdot, t)\|_{L^{q}}^{q} \leq \min \left(\|\rho(\cdot, t_{*})\|_{L^{q}}^{q}, C(q,\alpha) \ N_0^{-\frac{2-\alpha}{\alpha}(q-1)} (t-t_{*})^{-\frac{2}{\alpha}\left(q-1\right)} \|\rho(\cdot, t_{*})\|_{L^{1}}^{q}\right).
\end{equation}

Next, setting $p=q$ and $t>t_0$ as in the proof of \Cref{Lemma-2.2-k}, we can certify that 
\begin{equation}
    \label{eq-Theorem-4.3-k-2}
    \frac{\|\rho(\cdot, t)\|_{L^{q}}}{\|\rho(\cdot, t)\|_{L^{1}}} \leq \frac{\|\rho(\cdot, t_{0})\|_{L^{q}}}{\|\rho(\cdot, t_{0})\|_{L^{1}}}. 
\end{equation}
Indeed, with chemotaxis, an inequality equivalent to  \eqref{eq-Lemma-2.2-k-0} can be deduced from \eqref{eq-4.9-k-2}. 
This leads to an inequality equivalent to \eqref{eq-Lemma-2.2-k} whose expression inside square brackets turns out to be 
\begin{multline*}
    q^{-1}\|\rho(\cdot, t)\|_{L^{1}} \| \Lambda^{\frac{\alpha}{2}} \rho^{q/2}(\cdot, t) \|_{L^2}^2 \left( C(q,\alpha) \chi \|\rho(\cdot, t)\|_{L^{1}}^{\frac{\alpha}{2} \left(1+\frac{2}{2q-2+\alpha} \right)} \| \rho (\cdot, t)\|_{L^{\infty}}^{\frac{2-\alpha}{2}} -2 \right) \\+\epsilon \left(\|\rho(\cdot, t)\|_{L^{q}}^{2q} -\|\rho(\cdot, t)\|_{L^{1}} \int_{\mathbb{R}^d} \rho^{2q-1} \mathrm{~d} x \right), 
\end{multline*}
which, based on assumption \eqref{eq-assumption} and Hölder’s inequality, is negative for all $t>t_0$.  

Now, for every $\tau>0$ and $t_{*}\geq t_0$, we obtain
\begin{equation*}
    \begin{aligned}
        \int_{t_{*}}^{\infty} \int_{\mathbb{R}^{d}} \rho(x,t)^{q} \mathrm{~d} x \mathrm{d} t & =\int_{t_{*}}^{t_{*}+\tau} \int_{\mathbb{R}^{d}} \rho(x,t)^{q} \mathrm{~d} x \mathrm{d} t +\int_{t_{*}+\tau}^{\infty} \int_{\mathbb{R}^{d}} \rho(x,t)^{q} \mathrm{~d} x \mathrm{d} t\\
        \leq C(q,\alpha) & \left(\|\rho(\cdot, t_{*})\|_{L^{q}}^q \tau+ N_0^{-\frac{2-\alpha}{\alpha}(q-1)} \|\rho(\cdot, t_{*})\|_{L^{1}}^{q} \int_{t_{*}+\tau}^{\infty}\left(t-t_{*}\right)^{-\frac{2(q-1)}{\alpha}} \mathrm{d} t\right)\\
        \leq C(q,\alpha) & \left(\|\rho(\cdot, t_{*})\|_{L^{q}}^q \tau+ N_0^{-\frac{2-\alpha}{\alpha}(q-1)} \|\rho(\cdot, t_{*})\|_{L^{1}}^{q} \tau^{\frac{2+\alpha-2q}{\alpha}}\right),
    \end{aligned}
\end{equation*}
where we use \eqref{eq-Theorem-4.3-k-1} and the fact that  $q >1+\frac{\alpha}{2}$. Thus, using \eqref{eq-Theorem-4.3-k-2} and Hölder’s inequality, we see that
\begin{equation}
    \label{eq-Theorem-4.3-k-3}
    \|\rho(\cdot, t_{*})\|_{L^1}-\|\rho(\cdot, t)\|_{L^1} \leq C(q,\alpha) \epsilon \|\rho(\cdot, t_{*})\|_{L^1}^q \left(\frac{\|\rho(\cdot, t_{0})\|_{L^{\infty}}^{q-1}}{\|\rho(\cdot, t_{0})\|_{L^1}^{q-1}} \tau+N_0^{-\frac{2-\alpha}{\alpha}(q-1)} \tau^{\frac{2+\alpha-2q}{\alpha}}\right)
\end{equation}
for all $t>t_*>t_0$, and $\tau>0$.
%
Subsequently, choosing $\tau$ to minimize expression \eqref{eq-Theorem-4.3-k-3}, we find, for every $t>t_*>t_0$, that 
\begin{equation*}
    \|\rho(\cdot, t_{*})\|_{L^1}-\|\rho(\cdot, t)\|_{L^1} \leq C(q,\alpha) \epsilon  N_0^{-\frac{2-\alpha}{2}}  \|\rho(\cdot, t_{*})\|_{L^1}^q 
    \left(\frac{\|\rho(\cdot, t_{0})\|_{L^{\infty}}}{\|\rho(\cdot, t_{0})\|_{L^1}}\right)^{\frac{2(q-1)-\alpha}{2}} .
\end{equation*}

Then, employing the same argument used in the proof of \Cref{Theorem-1.1-k}, we obtain
\begin{equation}
    \label{eq-4.14-k}
    \inf _{t} \int_{\mathbb{R}^{2}} \rho(x, t) \mathrm{~d} x \geq \min \left(\frac{1}{2}\|\rho(\cdot, t_{0})\|_{L^{1}}, C(q,\alpha) \ N_0^{\frac{2-\alpha}{2\left(q-1\right)}} \epsilon^{-\frac{1}{q-1}}\left(\frac{\|\rho(\cdot, t_{0})\|_{L^{1}}}{\|\rho(\cdot, t_{0})\|_{L^{\infty}}}\right)^{\frac{q-(1+\alpha/2)}{q-1}}\right).
\end{equation}

Observe that \eqref{eq-4.14-k} implies $\displaystyle \lim _{t \rightarrow \infty}\|\rho(\cdot, t)\|_{L^{1}}>0$, as global well-posedness of the solution has been established and, from \Cref{Lemma-5.6-k}, 
\begin{equation*}
    N_0^{\frac{2-\alpha}{2\left(q-1\right)}} \|\rho(\cdot, t_{0})\|_{L^{\infty}}^{-\frac{q-(1+\alpha/2)}{q-1}} \geq N_0^{-\frac{q-2}{q-1}}>0.
\end{equation*}
Additionally, if there exists $\rho$ such that \eqref{eq-4.14-k} is not satisfied, it implies that there is no $t>0$ for which \eqref{eq-assumption} holds. Consequently, once again, $\displaystyle \lim _{t \rightarrow \infty}\|\rho(\cdot, t)\|_{L^{1}}>0$.
%
\end{proof}
\subsubsection*{Upper bound for the total fraction of unfertilized eggs} \label{Subsec:Upper-bound-for-m(t)} 
\addcontentsline{toc}{subsubsection}{\ \ \ \  Upper bound for the total fraction of unfertilized eggs}

For the classical Keller-Segel equation (equation \eqref{eq-4.1-k}  with $\alpha=2$  and $\epsilon=u=0$) a virial argument has been employed to demonstrate the existence of blowing-up solutions. This argument involves studying the evolution of the second-order moment $m_2(t)=\int_{\mathbb{R}^d}|x-x_0|^2 \rho(x,t) \mathrm{d} x$, $x_0 \in \mathbb{R}^d$. Assuming the regularity of the solution, 
one can derive a differential inequality of the form 
\begin{equation}
    \label{eq-Upper-bound-1}
     \frac{\mathrm{d}}{\mathrm{d} t} m_2(t)\leq f(m(t),m_2(t)). 
\end{equation}
It can be shown that for $d \geq 2$ and large (in some sense) initial data, this inequality leads to negative second moment of the solution, leading to a contradiction since the initial data and hence the solution are positive. 
This implies that the solution must lose regularity before the time $m_2$ would turn negative
(see e.g.  \citep{book-base,Exploding-solutions-nonlocal-quadratic-evolution,Biler-fractional-9}). 

    For the classical Keller-Segel equation, 
    since  $m(t)$ remains constant over time ($m(t)=m_0$), for $x \in \mathbb{R}^d$, the function $f$ in \eqref{eq-Upper-bound-1} satisfies:
    \begin{equation}
        \label{eq-f-m0-m2-classical-Keller-Segel}
        f(m_0, m_2(t)) =  
        \begin{cases} 
            \displaystyle  4m_0\left(1 - \frac{\chi}{8\pi}m_0\right), & d = 2, \\
            \displaystyle  2m_0\left(-1 + C\frac{\chi m_2(t)}{(\chi m_0)^{\frac{d}{d-2}}}\right), & d > 2.
        \end{cases}  
    \end{equation}
    Thus, for $d=2$, if $m_0>8\pi/\chi$, $m_2(t)$ would become negative in finite time, which is impossible since $\rho$ is nonnegative. 
    Similarly, for $d > 2$, if $\chi m_2(0)$ is sufficiently small compared to $(\chi m_0)^{\frac{d}{d-2}}$, $m_2(t)$ decreases for all $t$, leading to a contradiction. 

\citet{Kiselev-Biomixing} applied this idea to establish an upper bound for $m(t)$ in equation  \eqref{eq-4.1-k} with $\alpha=2$ and $d=2,$ and integer $q>2$. They observed that when the reaction is present, solutions will remain globally regular. But now the analog 
of \eqref{eq-Upper-bound-1} implies that $m(t)$ must decay and quickly become small (at least if $\chi$ is large), or otherwise the second moment will turn negative. An interesting feature of their result is that the upper bound on $m(t)$ as well as the time scale 
when it is achieved are independent of $\epsilon.$ This is the ``shadow" of the singularity formation that takes place if $\epsilon=0:$ 
even if $\epsilon$ is very small, the dynamics will concentrate significant mass in a small region and the size of $\rho$ there 
will drive the reaction. 

The case $0<\alpha<2$ makes this argument inapplicable, as the second moment of a typical solution to an evolution equation with fractional Laplacian cannot be finite. \citet{Exploding-solutions-nonlocal-quadratic-evolution} pointed out that in this case the weight function $|x|^2$ makes the linear term too strong to be controlled by the nonlinear part. 
%
To overcome this limitation and prove the existence of blowing-up solutions for a nonlocal Keller-Segel equation (equation \eqref{eq-4.1-k}  with $1<\alpha<2$  and $\epsilon=u=0$), \citet{Biler-fractional-9} extended the classical method by studying moments of lower order $\gamma \in(1,2)$:
\begin{equation}
    m_\gamma(t)=\int_{\mathbb{R}^d}|x-x_0|^\gamma \rho(x,t) \mathrm{d} x , \quad x_0 \in \mathbb{R}^d.
\end{equation}

They emphasized that, for $\alpha<2$, the existence of higher-order moments $m_\gamma$ with $\gamma \geq \alpha$ cannot be expected. Even for the linear equation $\partial_t v+\left(-\Delta\right)^{\alpha / 2} v=0$, the fundamental solution $p_\alpha(x, t)$ behaves like $p_\alpha(x, t) \sim\left(t^{d / \alpha}+|x|^{d+\alpha} / t\right)^{-1}$, forcing moments with $\gamma \geq \alpha$ to be infinite.

\citet{Biler-fractional-9} then demonstrated the existence of blowing-up solutions by showing the finite-time extinction of the function (setting $x_0=0$ for simplicity)
 \begin{equation}
    \label{pseudo-moment}
     w(t) \equiv \int_{\mathbb{R}^d} \varphi(x) \rho(x,t) \mathrm{d} x,
 \end{equation}
 where $\varphi(x)$ is a smooth nonnegative weight function on $\mathbb{R}^d$ defined as
\begin{equation}
    \label{eq-4.1}
    \varphi(x) \equiv \left(1+|x|^2\right)^{\gamma / 2}-1,
\end{equation}
with $\gamma \in(1,2]$.  
\citet{Biler-fractional-9} highlighted that the quantity $w$ is essentially equivalent to the moment $m_\gamma$ of order $\gamma$ of the solution, as for every  $\varepsilon>0$, a suitably chosen $C(\varepsilon)>0$ ensures for every $x \in \mathbb{R}^d$ that
\begin{equation}
    \label{eq-4.2}
    \varphi(x) \leq|x|^\gamma \leq \varepsilon+C(\varepsilon) \varphi(x).
\end{equation} 

Drawing inspiration from the work of \citet{Kiselev-Biomixing} and \citet{Biler-fractional-9}, we analyze the ODE associated to the evolution of $w(t)$ to
provide an upper bound for $m(t)$ (\Cref{Theorem-4.2-k}) that remains unchanged despite variations in the coupling of the reaction term.  
As \Cref{Lemma-5.6-k} assures that the balance between chemotaxis and the reaction term (as far as $q>2$) leads to a smooth decaying solution, global regularity of the solutions is ensured, giving rise to \Cref{Theorem-4.1-k}. 
Consequently, the quantity $m_\gamma(t)$ cannot vanish,  even for initial condition $\|\rho_0\|_{L^1}$  sufficiently large and well-concentrated around a point ($m_\gamma(0)$ small enough), in contrast to what occurs when $\epsilon=0$, for $u=0$ (see \cite{Biler-fractional-9, Biler-fractional-2017} for details). It is important to point out that, differently from \cite{Kiselev-Biomixing}, our discussions are explicitly based on the sign of $\mathrm{d}w/\mathrm{d} t$ for the initial condition $(m_0,w_0)$.  
To start, consider the two following lemmas from \cite{Biler-fractional-9}:
\begin{lemma}{\citep{Biler-fractional-9}} \label{Lemma-4.1}
    Let $\alpha \in(1,2)$, $\gamma \in(1, \alpha)$, and $\varphi$ be defined as in \eqref{eq-4.1}. Then, $\left(-\Delta\right)^{\alpha / 2} \varphi \in L^{\infty}$.
\end{lemma}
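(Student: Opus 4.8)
The plan is to reduce to estimating $\Lambda^\alpha g$, where $g(x)=(1+|x|^2)^{\gamma/2}$, since subtracting the constant $1$ does not change the fractional Laplacian (constants are annihilated by the difference $f(x)-f(y)$ in \eqref{eq-definition-fractional-Laplacian-integral}), so $\Lambda^\alpha\varphi=\Lambda^\alpha g$. First I would record the elementary pointwise bounds obtained by direct differentiation: $|g(x)|\le C(d,\gamma)(1+|x|)^\gamma$, $|\nabla g(x)|\le C(d,\gamma)(1+|x|)^{\gamma-1}$, and, crucially, $|D^2 g(x)|\le C(d,\gamma)(1+|x|)^{\gamma-2}$; in particular $\|D^2 g\|_{L^\infty}<\infty$ because $\gamma<2$. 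Since $g\in C^\infty$ and grows strictly slower than $|x|^\alpha$ (as $\gamma<\alpha$), the principal value integral defining $\Lambda^\alpha g(x)$ converges for every $x$, and one may use the symmetric second-difference representation
\[
\Lambda^\alpha g(x)=\frac{c_{d,\alpha}}{2}\int_{\mathbb{R}^d}\frac{2g(x)-g(x+y)-g(x-y)}{|y|^{d+\alpha}}\,dy .
\]

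Then I would split this integral according to $|y|<1$, $1\le|y|\le|x|/2$, and $|y|>|x|/2$ (the middle region being empty when $|x|\le 2$). On $\{|y|<1\}$, a second-order Taylor expansion together with $\|D^2 g\|_{L^\infty}<\infty$ gives $|2g(x)-g(x+y)-g(x-y)|\le C|y|^2$, so this contribution is at most $C\int_{|y|<1}|y|^{2-d-\alpha}\,dy<\infty$, uniformly in $x$, using $\alpha<2$. On $\{|y|>|x|/2\}$ (which contains all of $\{|y|\ge1\}$ once $|x|\le2$) the crude bound $|2g(x)-g(x+y)-g(x-y)|\le 2|g(x)|+|g(x+y)|+|g(x-y)|\le C|y|^\gamma$ — valid there since $|x|\lesssim|y|$ — yields $C\int_{|y|>\max(1,|x|/2)}|y|^{\gamma-d-\alpha}\,dy\le C/(\alpha-\gamma)$, bounded uniformly because $\gamma<\alpha$. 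On the middle annulus $\{1\le|y|\le|x|/2\}$ (so $|x|>2$) I would use second-order Taylor once more, but now exploit the Hessian \emph{decay}: for $t\in[0,1]$ one has $|x\pm ty|\ge|x|/2$, hence $|D^2 g(x\pm ty)|\le C|x|^{\gamma-2}$, so $|2g(x)-g(x+y)-g(x-y)|\le C|x|^{\gamma-2}|y|^2$ and the contribution is at most $C|x|^{\gamma-2}\int_1^{|x|/2}r^{1-\alpha}\,dr\le C|x|^{\gamma-2}\cdot|x|^{2-\alpha}=C|x|^{\gamma-\alpha}\le C$, again by $\gamma<\alpha$ and $|x|>2$. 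Summing the three pieces gives $\|\Lambda^\alpha\varphi\|_{L^\infty}<\infty$.

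The main obstacle is the far field for large $|x|$: applying the naive bound $|2g(x)-g(x+y)-g(x-y)|\le 2|g(x)|+\cdots$ on the whole region $\{|y|\ge1\}$ produces, after integration against $|y|^{-d-\alpha}$, a term of size $\sim|x|^\gamma$, which is unbounded. The fix is precisely to carve out the moderate-$y$ annulus $1\le|y|\le|x|/2$ and there use the quadratic cancellation of the symmetric difference together with the decay $|D^2 g(z)|\lesssim(1+|z|)^{\gamma-2}$ (asymptotic flatness of $g$ far from the origin); the gain $|x|^{\gamma-2}$ exactly offsets the loss $|x|^{2-\alpha}$ from integrating $|y|^{2-d-\alpha}$ out to radius $|x|$, and the leftover factor $|x|^{\gamma-\alpha}$ is bounded precisely by the hypothesis $\gamma<\alpha$. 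Everything else is routine estimation.
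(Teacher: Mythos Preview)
The paper does not supply its own proof of this lemma; it is quoted from \cite{Biler-fractional-9} and used as a black box. Your argument is a correct, self-contained proof: the second-difference representation, the global bound $\|D^2 g\|_{L^\infty}<\infty$ for the near region, the Hessian decay $|D^2 g(z)|\lesssim |z|^{\gamma-2}$ on the intermediate annulus, and the growth bound $|g(z)|\lesssim |z|^\gamma$ on the far region combine exactly as you describe, with the two hypotheses $\alpha<2$ and $\gamma<\alpha$ used precisely where needed.

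One small point of exposition: you declare the third region as $\{|y|>|x|/2\}$ but then integrate over $\{|y|>\max(1,|x|/2)\}$. When $|x|<2$ these differ on the shell $|x|/2<|y|<1$, which is harmless since that shell is already absorbed into your first region $\{|y|<1\}$; still, it is cleaner to state the partition from the outset as $\{|y|<1\}$, $\{1\le|y|\le|x|/2\}$, $\{|y|>\max(1,|x|/2)\}$, which is genuinely disjoint and exhaustive. With that cosmetic fix the write-up is complete.
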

\begin{lemma}{\citep{Biler-fractional-9}} \label{Lemma-4.2}
    For every $\gamma \in(1,2]$, the function $\varphi$ defined in \eqref{eq-4.1} is locally uniformly convex on $\mathbb{R}^{d}$. Moreover, there exists $K=K(\gamma)$ such that the inequality
    \begin{equation}
        \label{eq-4.10}
        (\nabla \varphi(x)-\nabla \varphi(y)) \cdot(x-y) \geq \frac{K|x-y|^{2}}{1+|x|^{2-\gamma}+|y|^{2-\gamma}}
    \end{equation}
    holds true for all $x, y \in \mathbb{R}^{d}$.
\end{lemma}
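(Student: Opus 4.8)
The plan is to reduce everything to a pointwise lower bound on the Hessian of $\varphi$ and then integrate along line segments. Since $\varphi(x)=(1+|x|^2)^{\gamma/2}-1$ is radial, a direct computation gives $\nabla\varphi(x)=\gamma(1+|x|^2)^{\gamma/2-1}x$ and
\begin{equation*}
    D^2\varphi(x)=\gamma(1+|x|^2)^{\gamma/2-1}\,\mathrm{Id}+\gamma(\gamma-2)(1+|x|^2)^{\gamma/2-2}\,x\otimes x .
\end{equation*}
This matrix is diagonal with respect to the orthogonal decomposition $\mathbb{R}^d=\mathbb{R}x\oplus x^{\perp}$: on $x^{\perp}$ its eigenvalue is $\gamma(1+|x|^2)^{\gamma/2-1}$, and on $\mathbb{R}x$ it is $\lambda(x):=\gamma(1+|x|^2)^{\gamma/2-2}\bigl(1+(\gamma-1)|x|^2\bigr)$, after collecting $(1+|x|^2)+(\gamma-2)|x|^2=1+(\gamma-1)|x|^2$. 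For $\gamma\in(1,2]$ both eigenvalues are strictly positive, and since $\gamma-1\le 1$ one has $\lambda(x)\le\gamma(1+|x|^2)^{\gamma/2-1}$, so $\lambda(x)$ is the smallest eigenvalue; thus $\xi\cdot D^2\varphi(x)\,\xi\ge\lambda(x)|\xi|^2$ for all $\xi\in\mathbb{R}^d$, with $\lambda$ continuous and positive. A continuous positive function attains a positive minimum on every compact set, which is exactly local uniform convexity of $\varphi$. (When $\gamma=2$ this is trivial, as $\varphi(x)=|x|^2$.)

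For the quantitative estimate \eqref{eq-4.10} I would apply the fundamental theorem of calculus along the segment $z_t:=y+t(x-y)$, $t\in[0,1]$:
\begin{equation*}
    \bigl(\nabla\varphi(x)-\nabla\varphi(y)\bigr)\cdot(x-y)=\int_0^1(x-y)\cdot D^2\varphi(z_t)(x-y)\,\mathrm{d}t\ \ge\ |x-y|^2\int_0^1\lambda(z_t)\,\mathrm{d}t .
\end{equation*}
It then remains to bound $\lambda$ below by a negative power of $1+|\cdot|$ and to integrate. Elementary inequalities give $\lambda(z)\ge\tfrac{\gamma(\gamma-1)}{2}(1+|z|)^{\gamma-2}$: indeed $1+|z|^2\le(1+|z|)^2$ and the exponent $\gamma/2-2$ is negative, so $(1+|z|^2)^{\gamma/2-2}\ge(1+|z|)^{\gamma-4}$, while $1+(\gamma-1)|z|^2\ge(\gamma-1)(1+|z|^2)\ge\tfrac{\gamma-1}{2}(1+|z|)^2$ (using $\gamma\le 2$ and $(1+|z|)^2\le 2(1+|z|^2)$). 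Since $\gamma-2<0$, the function $s\mapsto(1+s)^{\gamma-2}$ is nonincreasing and $|z_t|\le\max(|x|,|y|)\le|x|+|y|$, hence $\int_0^1\lambda(z_t)\,\mathrm{d}t\ge\tfrac{\gamma(\gamma-1)}{2}(1+|x|+|y|)^{\gamma-2}$.

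Finally, to recover the precise denominator $1+|x|^{2-\gamma}+|y|^{2-\gamma}$ in \eqref{eq-4.10}, I would invoke subadditivity of $t\mapsto t^{2-\gamma}$ on $[0,\infty)$, valid since $2-\gamma\in[0,1)$: $(1+|x|+|y|)^{2-\gamma}\le1+|x|^{2-\gamma}+|y|^{2-\gamma}$, so $(1+|x|+|y|)^{\gamma-2}\ge\bigl(1+|x|^{2-\gamma}+|y|^{2-\gamma}\bigr)^{-1}$. Combining the last three displays yields \eqref{eq-4.10} with $K=\gamma(\gamma-1)/2$ (notably dimension-free, because the argument is really one-dimensional along segments). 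Most of this is routine; the only points requiring genuine care are (i) noticing in the Hessian computation that for $\gamma<2$ the rank-one correction $x\otimes x$ enters with a negative coefficient, so the smallest eigenvalue is the radial one and it degenerates like $|x|^{\gamma-2}$ as $|x|\to\infty$ — which is precisely why one cannot expect more than \emph{local} uniform convexity — and (ii) the subadditivity step, which converts the crude bound $(1+|x|+|y|)^{\gamma-2}$ into the sharp weighted form stated in the lemma. I expect (ii), matching exactly the weight $1+|x|^{2-\gamma}+|y|^{2-\gamma}$ rather than a merely comparable quantity, to be the only mildly delicate piece of bookkeeping.
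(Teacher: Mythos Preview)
Your argument is correct. The Hessian computation, the identification of the radial eigenvalue $\lambda(x)=\gamma(1+|x|^2)^{\gamma/2-2}\bigl(1+(\gamma-1)|x|^2\bigr)$ as the smallest one, the integration along segments, the chain of elementary inequalities leading to $\lambda(z)\ge\tfrac{\gamma(\gamma-1)}{2}(1+|z|)^{\gamma-2}$, and the subadditivity of $t\mapsto t^{2-\gamma}$ on $[0,\infty)$ for $2-\gamma\in(0,1)$ are all valid and cleanly executed.

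There is nothing to compare against here: the paper does not prove this lemma but simply imports it from \cite{Biler-fractional-9}. The only discrepancy worth noting is that the paper records, in the remark immediately following the statement, that one may take $K(\gamma)=\gamma-1$, whereas your argument produces $K=\gamma(\gamma-1)/2$. Since the lemma only asserts the existence of \emph{some} $K=K(\gamma)>0$, and since your constant differs from the paper's by the harmless factor $\gamma/2\in(1/2,1]$, this is immaterial for every application downstream; the slight loss comes from your use of $|z_t|\le|x|+|y|$ and $(1+|z|)^2\le 2(1+|z|^2)$ rather than a marginally sharper bookkeeping.
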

\begin{remark} \label{value-kgamma}
   The derivation of inequality \eqref{eq-4.10} hinges on the condition $\gamma>1$, and consequently $\alpha>1$, since $m_\gamma$ with $\gamma \geq \alpha$ cannot be expected to be finite. 
   Furthermore, it can be established that $K(\gamma)=\gamma-1$. 
\end{remark}
\begin{lemma} \label{Lemma-Inequality-Parte-1-Proof}
    Let $\alpha \in(1,2)$, $\gamma \in(1, \alpha)$, and $\varphi$ be defined as in \eqref{eq-4.1}. Then, the inequality
    \begin{equation}
        \label{eq-Inequality-Parte-1-Proof}
        \left|\frac{\nabla \varphi(x)}{\gamma}\right|^{\frac{\gamma}{\gamma-1}}\leq \frac{2}{\gamma}\varphi(x)
    \end{equation}
    holds for all $x, y \in \mathbb{R}^{d}$.
\end{lemma}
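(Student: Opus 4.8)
The plan is to collapse the inequality to a single real variable and then finish with an elementary monotonicity argument. First I would compute $\nabla\varphi(x) = \gamma\,(1+|x|^2)^{\gamma/2-1}\,x$, so that $\left|\frac{\nabla\varphi(x)}{\gamma}\right| = |x|\,(1+|x|^2)^{\gamma/2-1}$. Setting $q := \frac{\gamma}{\gamma-1}$ (the conjugate exponent) and $p := \frac{\gamma(2-\gamma)}{2(\gamma-1)} > 0$, raising to the power $q$ gives $\left|\frac{\nabla\varphi(x)}{\gamma}\right|^{q} = \dfrac{|x|^{q}}{(1+|x|^2)^{p}}$, and one checks the identity $q/2 = p + \gamma/2$ (equivalently $p - q/2 = -\gamma/2$, and $q = \gamma + 2p$). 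Hence, multiplying \eqref{eq-Inequality-Parte-1-Proof} through by $(1+|x|^2)^p > 0$, the claim becomes $|x|^{q} \le \frac{2}{\gamma}\big((1+|x|^2)^{q/2} - (1+|x|^2)^{p}\big)$.

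Next I would substitute $v = 1+|x|^2 \in [1,\infty)$ and then $w = 1/v \in (0,1]$. Since $|x|^q = (v-1)^{q/2}$ and $v^{p} = v^{q/2}\,v^{-\gamma/2}$, dividing by $v^{q/2}$ reduces the target to
\begin{equation*}
    (1-w)^{q/2} \le \frac{2}{\gamma}\big(1 - w^{\gamma/2}\big), \qquad w\in[0,1].
\end{equation*}
From here the argument is purely elementary. Because $\gamma<2$ we have $q/2 = \frac{\gamma}{2(\gamma-1)} > 1$, so $(1-w)^{q/2}\le 1-w$ for $w\in[0,1]$; it therefore suffices to prove $1-w \le \frac{2}{\gamma}\big(1-w^{\gamma/2}\big)$, i.e. $k(w) := 2w^{\gamma/2} - \gamma w \le 2-\gamma$ on $[0,1]$. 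Since $k(0)=0$, $k(1)=2-\gamma$, and $k'(w) = \gamma\big(w^{\gamma/2-1}-1\big) \ge 0$ for $w\in(0,1]$ (as $\gamma/2-1<0$), the function $k$ is nondecreasing and attains its maximum $2-\gamma$ at $w=1$, which closes the chain and finishes the proof.

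The computation is essentially routine; the one step I would flag as the main obstacle is the exponent bookkeeping, in particular resisting the temptation to bound $\left|\frac{\nabla\varphi}{\gamma}\right|^{q}$ by $|x|^\gamma$. That cruder estimate fails when compared with $\frac{2}{\gamma}\varphi$ near the origin precisely because $\gamma<2$ (there $\varphi(x)\sim\frac{\gamma}{2}|x|^2$ while $|x|^\gamma\gg|x|^2$), so one must keep the full factor $(1+|x|^2)^{-p}$ and pass to the variable $w=1/(1+|x|^2)$. Once the inequality is phrased in terms of $w$, peeling off the trivial bound $(1-w)^{q/2}\le 1-w$ and handling the remainder by the monotonicity of $k$ is immediate.
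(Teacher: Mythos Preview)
Your proof is correct. Both your argument and the paper's reduce to a one-variable inequality after computing $\nabla\varphi(x)=\gamma(1+|x|^2)^{\gamma/2-1}x$, but the finishing calculus differs. The paper works with the ratio $f(r)=\big|\gamma^{-1}\nabla\varphi\big|^{\gamma/(\gamma-1)}/\varphi$ on $\mathbb{R}_+$, bounds it above by the auxiliary ratio $g(r)=\big|\gamma^{-1}\nabla\varphi\big|\,r/\varphi$ (which amounts to the trivial estimate $(1+r^2)^{1/2}>r$), and then shows $g$ is strictly decreasing with $\lim_{r\to 0}g(r)=2/\gamma$. Your substitution $w=1/(1+|x|^2)\in(0,1]$ instead compactifies the domain and recasts the claim as $(1-w)^{q/2}\le\tfrac{2}{\gamma}(1-w^{\gamma/2})$, which you dispatch in two clean steps: the convexity bound $(1-w)^{q/2}\le 1-w$ (using $q/2>1$, i.e.\ $\gamma<2$) followed by the monotonicity of $k(w)=2w^{\gamma/2}-\gamma w$. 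The two routes are of comparable length; your change of variables arguably isolates the role of the hypothesis $\gamma<2$ more transparently (it is used exactly once, to get $q/2>1$), while the paper's approach makes the sharp constant $2/\gamma$ appear as the limiting value of an explicit decreasing function. Either way the exponent bookkeeping you flagged---keeping the full factor $(1+|x|^2)^{-p}$ rather than the crude bound $|x|^\gamma$---is indeed the only genuine pitfall.
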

\begin{proof}
    Note that $\nabla \varphi(x)=\gamma\left(1+|x|^{2}\right)^{\frac{\gamma}{2}-1} x$ and both $|\nabla \varphi(x)|^{\frac{\gamma}{\gamma-1}}$ and $\varphi(x)$ are radially symmetric functions. Then, the functions $F, \; G:\mathbb{R}^d\backslash \{0\}\rightarrow \mathbb{R}$ defined as 
    \begin{equation*}
        F(x)=\frac{|\gamma^{-1}\nabla \varphi(x)|^{\frac{\gamma}{\gamma-1}}}{\varphi(x)}, \quad \text{ and } \quad  G(x)=\frac{|\gamma^{-1}\nabla \varphi(x)||x|}{\varphi(x)}
    \end{equation*}
    are equivalent, respectively, to the functions $f, \; g:\mathbb{R}_{+}^{*} \rightarrow \mathbb{R}$,
    \begin{equation*}
        f(x)=\frac{\left[\left(1+x^{2}\right)^{\frac{\gamma}{2}-1} x \right]^{\frac{\gamma}{\gamma-1}}}{\left(1+x^2\right)^{\gamma / 2}-1}, \quad \text{ and } \quad
        g(x)=\frac{\left(1+x^{2}\right)^{\frac{\gamma}{2}-1} x^{2}}{\left(1+x^2\right)^{\gamma / 2}-1}.
    \end{equation*}

     We observe that $f(x) < g(x)$. Indeed, this inequality can be expressed as
     \begin{equation*}
         \left[\left(1+x^{2}\right)^{\frac{\gamma}{2}-1} x \right]^{\frac{\gamma}{\gamma-1}} < \left(1+x^{2}\right)^{\frac{\gamma}{2}-1} x^2,
     \end{equation*}
     which simplifies to $\left(1+x^{2}\right)^{1/2}  > x$,
    a statement valid for all $x \in \mathbb{R}_{+}^{*}$. 
    Further,
    \begin{equation*}
        g^{\prime}(x)=\dfrac{2x\left(1+x^2\right)^\frac{{\gamma}-4}{2}\left(\left(1+x^2\right)^\frac{{\gamma}}{2}-\frac{\gamma}{2}x^2-1\right)}{\left(\left(1+x^2\right)^\frac{{\gamma}}{2}-1\right)^2}< 0 \quad \forall \; x>0,
    \end{equation*}
    since
    \begin{equation*}
        \begin{split}
            \left(1+x^2\right)^\frac{{\gamma}}{2}-\frac{\gamma}{2}x^2-1 
            & =\left(1+x^2\right) \left(\left(1+x^2\right)^{\frac{\gamma}{2}-1}-\frac{\gamma}{2}\right)+\left(\frac{\gamma}{2}-1\right)<0 \quad \forall \; x>0,
        \end{split} 
    \end{equation*}
    implying $g(x)\leq \lim_{x \rightarrow 0} g(x)$ for all $x \in \mathbb{R}_{+}^{*}$. Then, as
    \begin{equation*}
         \lim_{x \rightarrow 0} g(x)= \lim_{x \rightarrow 0} \left(\frac{|x|^2 }  {\left(1+|x|^{2}\right)^{\frac{\gamma}{2}}-1} \right)\overset{\underset{\text{L'H}}{}}{=}\lim_{x \rightarrow 0} \left(\frac{2x }{\gamma x\left(1+|x|^2\right)^{\frac{{\gamma}}{2}-1}} \right)= \frac{2}{\gamma}, 
    \end{equation*}
    we have $f(x)\leq \frac{2}{\gamma}$, concluding the proof.
\end{proof}
\begin{Proposition} \label{Parte-1-Proof} 
    Let $q$ and $s$ be integers such that $q$, $s>2$, $d=2$, $\alpha \in (1,2)$, $\beta$ satisfy $0\leq \beta < \alpha$, and $\gamma \in(1, \beta]$. Assume $u \in C^{\infty}(\mathbb{R}^d \times[0, \infty))$ is divergence free, and $\rho$ solves \eqref{eq-4.1-k} with $\rho_0 \geq 0 \in K_{s, \beta}$. Then the time derivative of $w(t)$, defined in \eqref{pseudo-moment}, can be expressed as
    \begin{equation}
        \label{eq-4.5-k-1}
        \frac{\mathrm{d}}{\mathrm{d} t} w(t) \leq 2\delta_{(u)} \chi^{-\mu}  w(t)+f_{(u)}(m,w), 
    \end{equation}
    where $\delta_{(u)}$ equals zero when $u=0$ and one otherwise, and 
    \begin{equation}
        \label{eq-4.5-k-2}
        f_{(u)}(m,w)=m(t) \left(C_1 +\chi^{\mu(\gamma-1)} \|u\|_{L^{\infty}}^{\gamma}-\chi  C_2 m(t)^{\frac{2}{\gamma}} \left(m(t)+2w(t)\right)^{-\frac{2-\gamma}{\gamma}}\right), 
    \end{equation}
    with $C_1$ and $C_2$ being functions of $\gamma$, and $\mu\geq 0$ a parameter to be chosen.
\end{Proposition}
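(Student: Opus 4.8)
The plan is to differentiate $w(t)$ along \eqref{eq-4.1-k}, estimate each of the four resulting terms, and then reorganize. Since $\gamma\le\beta$ we have $\varphi(x)\le|x|^\gamma\le 1+|x|^\beta$ by \eqref{eq-4.2}, and the solution supplied by \Cref{Theorem-4.1-k} lies in $C(K_{s,\beta},[0,\infty))\cap C^\infty(\mathbb{R}^d\times(0,\infty))$ and is nonnegative by \Cref{Theorem-nonnegative-solution}; hence $w(t)$ is finite, $C^1$ in $t$, and
\begin{equation*}
\frac{\mathrm{d}}{\mathrm{d} t}w(t)=-\int_{\mathbb{R}^2}\varphi\,u\cdot\nabla\rho\,\mathrm{d}x-\int_{\mathbb{R}^2}\varphi\,\Lambda^\alpha\rho\,\mathrm{d}x-\chi\int_{\mathbb{R}^2}\varphi\,\nabla\cdot\bigl(\rho\nabla(-\Delta)^{-1}\rho\bigr)\,\mathrm{d}x-\epsilon\int_{\mathbb{R}^2}\varphi\,\rho^q\,\mathrm{d}x.
\end{equation*}
All integrations by parts below produce no boundary contribution because $\rho$ and its derivatives decay fast (they belong to $K_{s,\beta}$) while $\varphi$ grows only polynomially. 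The last term is $\le0$ since $\varphi,\rho\ge0$, so we simply discard it; this is precisely why the final estimate does not see $\epsilon$.

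For the dissipative term I use self-adjointness of $\Lambda^\alpha$ and \Cref{Lemma-4.1} (applicable since $\gamma\in(1,\beta]\subset(1,\alpha)$): $-\int\varphi\,\Lambda^\alpha\rho\,\mathrm{d}x=-\int\rho\,\Lambda^\alpha\varphi\,\mathrm{d}x\le\|\Lambda^\alpha\varphi\|_{L^\infty}\,m(t)=:C_1 m(t)$. For the chemotactic term I integrate by parts, represent $\nabla(-\Delta)^{-1}\rho$ via the $2$D Newtonian kernel, symmetrize in $x\leftrightarrow y$, and apply the convexity estimate \Cref{Lemma-4.2} with $K=\gamma-1$ (\Cref{value-kgamma}) to get
\begin{equation*}
-\chi\int\varphi\,\nabla\cdot\bigl(\rho\nabla(-\Delta)^{-1}\rho\bigr)\,\mathrm{d}x=-\frac{\chi}{4\pi}\iint\rho(x)\rho(y)\,\frac{(\nabla\varphi(x)-\nabla\varphi(y))\cdot(x-y)}{|x-y|^2}\,\mathrm{d}x\,\mathrm{d}y\le-\frac{\chi(\gamma-1)}{4\pi}\iint\frac{\rho(x)\rho(y)}{1+|x|^{2-\gamma}+|y|^{2-\gamma}}\,\mathrm{d}x\,\mathrm{d}y.
\end{equation*}
The double integral is bounded below by Jensen's inequality applied to the convex map $t\mapsto 1/t$ against the probability measure $\rho(x)\rho(y)\,\mathrm{d}x\,\mathrm{d}y/m^2$, which reduces the task to an upper bound for $1+\tfrac{2}{m}\int\rho|x|^{2-\gamma}\,\mathrm{d}x$. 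Using $|x|^{2-\gamma}\le(1+|x|^2)^{(2-\gamma)/2}=(\varphi(x)+1)^{(2-\gamma)/\gamma}$ together with Hölder's inequality with conjugate exponents $\tfrac{\gamma}{2(\gamma-1)}$ and $\tfrac{\gamma}{2-\gamma}$ gives $\int\rho|x|^{2-\gamma}\,\mathrm{d}x\le m^{2(\gamma-1)/\gamma}(m+w)^{(2-\gamma)/\gamma}$, hence
\begin{equation*}
\iint\frac{\rho(x)\rho(y)}{1+|x|^{2-\gamma}+|y|^{2-\gamma}}\,\mathrm{d}x\,\mathrm{d}y\ge\tfrac13\,m^{(\gamma+2)/\gamma}(m+w)^{-(2-\gamma)/\gamma}\ge\tfrac13\,m\cdot m^{2/\gamma}(m+2w)^{-(2-\gamma)/\gamma},
\end{equation*}
producing the $-\chi C_2\,m\,m^{2/\gamma}(m+2w)^{-(2-\gamma)/\gamma}$ contribution with $C_2=\tfrac{\gamma-1}{12\pi}$.

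It remains to treat the advection term, which is where $\mu$ enters. If $u\equiv0$ it vanishes and we are done with $\delta_{(u)}=0$. Otherwise, integrating by parts using $\nabla\cdot u=0$, bounding $|\nabla\varphi(x)|\le\gamma^{1/\gamma}2^{(\gamma-1)/\gamma}\varphi(x)^{(\gamma-1)/\gamma}$ via \Cref{Lemma-Inequality-Parte-1-Proof}, and applying Hölder with exponents $\gamma$ and $\tfrac{\gamma}{\gamma-1}$ yields $-\int\varphi\,u\cdot\nabla\rho\,\mathrm{d}x\le C(\gamma)\|u\|_{L^\infty}\,m^{1/\gamma}w^{(\gamma-1)/\gamma}$. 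Writing $m^{1/\gamma}w^{(\gamma-1)/\gamma}\|u\|_{L^\infty}=\bigl(\eta\,m^{1/\gamma}\|u\|_{L^\infty}\bigr)\bigl(\eta^{-1}w^{(\gamma-1)/\gamma}\bigr)$ and invoking Young's inequality with exponents $\gamma$ and $\tfrac{\gamma}{\gamma-1}$, the choice of $\eta$ that normalizes the coefficient of the resulting $w$-term to $2\chi^{-\mu}$ forces the complementary term to be of the form $C(\gamma)\chi^{\mu(\gamma-1)}\|u\|_{L^\infty}^\gamma\,m$. Collecting the four contributions and relabeling the $\gamma$-dependent constants gives \eqref{eq-4.5-k-1}--\eqref{eq-4.5-k-2}.

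I expect the main obstacle to be the lower bound on the chemotactic double integral — extracting the exact powers $m^{(\gamma+2)/\gamma}$ and $(m+2w)^{-(2-\gamma)/\gamma}$ out of \Cref{Lemma-4.2} through the Jensen/Hölder chain — together with bookkeeping the $\mu$-dependent Young split so that the coefficient of $w$ comes out exactly $2\chi^{-\mu}$; the remaining estimates are routine consequences of Lemmas~\ref{Lemma-4.1},~\ref{Lemma-4.2} and~\ref{Lemma-Inequality-Parte-1-Proof} and of the regularity and decay furnished by \Cref{Theorem-4.1-k}.
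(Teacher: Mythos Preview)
Your proposal is correct and follows the paper's scheme: differentiate $w$, treat the four terms separately using \Cref{Lemma-4.1}, \Cref{Lemma-4.2}, \Cref{Lemma-Inequality-Parte-1-Proof}, drop the reaction term, and collect. Two minor methodological differences are worth noting. For the chemotactic lower bound, the paper applies H\"older to $m^2=\iint\rho\rho$ with exponents $\tfrac{2}{\gamma},\tfrac{2}{2-\gamma}$ and then bounds $\iint\rho\rho(1+|x|^{2-\gamma}+|y|^{2-\gamma})^{\gamma/(2-\gamma)}$ via the power-convexity inequality and $|x|^\gamma\le 1+\varphi(x)$, arriving at the same $m^{4/\gamma}(m^2+2mw)^{-(2-\gamma)/\gamma}$; your Jensen route (convexity of $t\mapsto 1/t$ against $\rho\otimes\rho/m^2$, then H\"older on $\int\rho|x|^{2-\gamma}$) is a clean alternative that yields the same form with an explicit $C_2=(\gamma-1)/(12\pi)$. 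For the advection term, the paper applies Young's inequality \emph{pointwise} to $|\gamma^{-1}\nabla\varphi|\,|u|$ with weight $\chi^{-\mu}$ and only then invokes \Cref{Lemma-Inequality-Parte-1-Proof}, which produces the coefficients $2\chi^{-\mu}$ and $\chi^{\mu(\gamma-1)}$ directly without the intermediate H\"older step; your ``H\"older first, Young second'' variant works as well and, after choosing $\eta$ appropriately, delivers the same (in fact slightly sharper) constants.
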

\begin{proof}
    By differentiating equation \eqref{pseudo-moment}, we obtain 
    \begin{multline}
        \label{eq-4.4-k}
        \hspace{1.5cm} \frac{\mathrm{d}}{\mathrm{d} t} w=\int_{\mathbb{R}^2}\varphi(x) (u \cdot \nabla) \rho \mathrm{~d} x-\int_{\mathbb{R}^2}\varphi(x) \left(-\Delta\right)^{\alpha / 2} \rho \mathrm{~d} x+\\ \chi \int_{\mathbb{R}^2}\varphi(x) \nabla \cdot \left(\rho \nabla \Delta^{-1} \rho\right) \mathrm{~d} x-\epsilon \int_{\mathbb{R}^2}\varphi(x) \rho^q \mathrm{~d} x. \hspace{1.5cm}
     \end{multline}    
    Note that, due to \Cref{Theorem-4.1-k}, all the upcoming integrations by parts are justified for all $t \geq 0$. 
    
    As $\nabla \cdot u=0$, we have 
    \begin{equation}
        \label{eq-conta-u-1}
        \int_{\mathbb{R}^2}\varphi(x)(u \cdot \nabla) \rho \mathrm{~d} x=- \int_{\mathbb{R}^2} \nabla \varphi(x) \cdot u \rho \mathrm{~d} x.
    \end{equation}
Then, assuming that $u$ is an arbitrary smooth divergence-free vector field, and applying Young's inequality  and \Cref{Lemma-Inequality-Parte-1-Proof}, we see that
\begin{equation*}
    \begin{split}
        \left|\frac{\nabla \varphi(x)}{\gamma}\right| \left|u\right| 
        & \leq  \chi^{-\mu}\left|\frac{\nabla \varphi(x)}{\gamma}\right|^{\frac{\gamma}{\gamma-1}}+ \chi^{\mu(\gamma-1)} \left((\gamma-1)^{\gamma-1} \gamma^{-\gamma} \right)\left|u\right|^{\gamma} \\
        & \leq \frac{2}{\gamma} \chi^{-\mu} \varphi(x)+ \frac{\chi^{\mu(\gamma-1)}}{\gamma} \left|u\right|^{\gamma}, 
    \end{split}        
\end{equation*}
with $\mu>0$ to be chosen later.  
Therefore, we can rewrite \eqref{eq-conta-u-1} as 
\begin{equation*}
    \left|\int_{\mathbb{R}^2}\nabla \varphi(x) \cdot u \rho \mathrm{~d} x\right| \leq 2 \chi^{-\mu} w(t)+\chi^{\mu(\gamma-1)} \|u\|_{L^{\infty}}^{\gamma} m(t).
\end{equation*}

Next, from \Cref{Lemma-4.1}, we find that
    \begin{equation*}
        -\int_{\mathbb{R}^{2}}\left(-\Delta\right)^{\alpha / 2} \rho(x, t) \varphi(x) \mathrm{~d} x=-\int_{\mathbb{R}^{2}} \rho(x, t) \left(-\Delta\right)^{\alpha / 2}\varphi(x) \mathrm{~d} x \leq \| \left(-\Delta\right)^{\alpha / 2}\varphi \|_{L^{\infty}} \int_{\mathbb{R}^{2}} \rho(x, t) \mathrm{~d} x.
    \end{equation*}

    For the chemotaxis term, we obtain 
    \begin{equation}
        \label{chemotaxis-term}
        \begin{split}
            \int_{\mathbb{R}^2}\varphi(x) \nabla \cdot \left(\rho \nabla \Delta^{-1} \rho\right) \mathrm{~d} x &=-\int_{\mathbb{R}^2}\nabla \varphi(x) \cdot \left(\rho \nabla \Delta^{-1} \rho\right) \mathrm{~d} x\\
            & =- \int_{\mathbb{R}^2 \times \mathbb{R}^2} \nabla \varphi(x) \cdot  \frac{x-y}{|x-y|^2} \rho(x, t)  \rho(y, t) \mathrm{~d} y \mathrm{~d} x \\
            &=-\frac{1}{2} \int_{\mathbb{R}^2 \times \mathbb{R}^2} \left(\nabla \varphi(x) - \nabla \varphi(y)\right) \cdot \left(x-y \right) \frac{\rho(x, t) \rho(y, t) }{|x-y|^2} \mathrm{~d} y \mathrm{~d} x,
        \end{split}
    \end{equation}
    where in the last step we used symmetrization in $x, y$. 
    Now, observe that 
    \begin{equation*}
        \begin{split}
            m^{2} & =\int_{\mathbb{R}^2 \times \mathbb{R}^2}\rho(x, t) \rho(y, t) \mathrm{~d} x \mathrm{~d} y \\
            & =\int_{\mathbb{R}^2 \times \mathbb{R}^2}\rho(x, t) \rho(y, t) \frac{1}{\left(1+|x|^{2-\gamma}+|y|^{2-\gamma}\right)^{\gamma/2}} \left(1+|x|^{2-\gamma}+|y|^{2-\gamma}\right)^{\gamma/2} \mathrm{~d} x \mathrm{~d} y \\
            & =\int_{\mathbb{R}^2 \times \mathbb{R}^2} \left(\frac{\rho(x, t) \rho(y, t)}{1+|x|^{2-\gamma}+|y|^{2-\gamma}}\right)^{\gamma/2}\left(\rho(x, t) \rho(y, t)\right)^{\frac{2-\gamma}{2}}\left(1+|x|^{2-\gamma}+|y|^{2-\gamma}\right)^{\gamma/2} \mathrm{~d} x \mathrm{~d} y.          
        \end{split}
    \end{equation*}
    Thus, by applying the Hölder's inequality, we obtain
    \begin{equation}
         \label{conta-1}
         \begin{split}
             m^{2} & \leq \left(\int_{\mathbb{R}^2 \times \mathbb{R}^2} \frac{\rho(x, t) \rho(y, t)}{1+|x|^{2-\gamma}+|y|^{2-\gamma}} \mathrm{~d} x \mathrm{~d} y \right)^{\frac{\gamma}{2}} \\ 
            & \hspace{4cm} \left(\int_{\mathbb{R}^2 \times \mathbb{R}^2}  \rho(x, t) \rho(y, t)\left(1+|x|^{2-\gamma}+|y|^{2-\gamma}\right)^{\frac{\gamma}{2-\gamma}} \mathrm{~d} x \mathrm{~d} y\right)^{\frac{2-\gamma}{2}}.
         \end{split}        
    \end{equation}
    
    From the properties of convex functions, as $\gamma/(2-\gamma)>1$, and utilizing the inequality $|x|^{\gamma} \leq 1+\varphi(x)$, we have     
    \begin{equation*}
        \left(1+|x|^{2-\gamma}+|y|^{2-\gamma}\right)^{\frac{\gamma}{2-\gamma}} \leq C\left(1+|x|^{\gamma}+|y|^{\gamma}\right)\leq C\left(1+\varphi(x)+\varphi(y)\right),
    \end{equation*}
    for $C$ a function of $\gamma$.
    This implies that 
    \begin{equation*}
        \int_{\mathbb{R}^2 \times \mathbb{R}^2}  \rho(x, t) \rho(y, t)  \left(1+|x|^{2-\gamma}+|y|^{2-\gamma}\right)^{\frac{\gamma}{2-\gamma}} \mathrm{~d} x \mathrm{~d} y \leq C \left[m^2+2m\left(\int_{\mathbb{R}^2 }  \varphi(x) \rho(x, t) \mathrm{~d} x  \right) \right].    
    \end{equation*}

    Thus, back to \eqref{conta-1}, we have
    \begin{equation}
        \label{conta-2}
        \left(\int_{\mathbb{R}^2 \times \mathbb{R}^2} \frac{\rho(x, t) \rho(y, t)}{1+|x|^{2-\gamma}+|y|^{2-\gamma}} \mathrm{~d} x \mathrm{~d} y \right)^{\frac{\gamma}{2}} \geq C^{-\frac{2-\gamma}{2}} \frac{m^{2}}{\left(m^2+2m w(t)\right)^{\frac{2-\gamma}{2}}},
    \end{equation}
    and, considering \Cref{Lemma-4.2}, we see that
    \begin{equation*}
        \int_{\mathbb{R}^2 \times \mathbb{R}^2} \frac{\rho(x, t) \rho(y, t)}{1+|x|^{2-\gamma}+|y|^{2-\gamma}} \mathrm{~d} x \mathrm{~d} y \leq \frac{1}{K} \int_{\mathbb{R}^2 \times \mathbb{R}^2} (\nabla \varphi(x)-\nabla \varphi(y)) \cdot(x-y) \frac{\rho(x, t) \rho(y, t)}{|x-y|^{2}} \mathrm{~d} x \mathrm{~d} y.
    \end{equation*}
    Hence, from \eqref{conta-2},
    \begin{equation*}
        \int_{\mathbb{R}^2 \times \mathbb{R}^2} (\nabla \varphi(x)-\nabla \varphi(y)) \cdot(x-y) \frac{\rho(x, t) \rho(y, t)}{|x-y|^{2}} \mathrm{~d} x \mathrm{~d} y \geq K C^{-\frac{2-\gamma}{\gamma}} \frac{m^{\frac{4}{\gamma}}}{\left(m^2+2m w(t)\right)^{\frac{2-\gamma}{\gamma}}}.
    \end{equation*}
    Therefore, the chemotaxis term \eqref{chemotaxis-term} can be estimated as
    \begin{equation*}
        \begin{split}
            \int_{\mathbb{R}^2}\varphi(x) \nabla \cdot \left(\rho \nabla \Delta^{-1} \rho\right) \mathrm{~d} x & \leq -C_2 \frac{m(t)^{\frac{4}{\gamma}}}{\left(m(t)^2+2m(t)w(t)\right)^{\frac{2-\gamma}{\gamma}}} \\
            & \leq -C_2 m(t)^{1+\frac{2}{\gamma}} \left(m(t)+2w(t)\right)^{-\frac{2-\gamma}{\gamma}}
        \end{split}
    \end{equation*}
allowing us to rewrite \eqref{eq-4.4-k} as
\begin{multline*}
    \frac{\mathrm{d}}{\mathrm{d} t} w(t) \leq m(t) \left(C_1 + \chi^{\mu(\gamma-1)} \|u\|_{L^{\infty}}^{\gamma}-\chi  C_2 m(t)^{\frac{2}{\gamma}} \left(m(t)+2w(t)\right)^{-\frac{2-\gamma}{\gamma}}\right)+ 2 \chi^{-\mu} w(t)-\epsilon \int_{\mathbb{R}^2}\varphi(x) \rho^q \mathrm{~d} x,
\end{multline*}
which can be simplified to \eqref{eq-4.5-k-1}. 
\end{proof}
\begin{theorem} \label{Theorem-4.2-k}
    Let $q$ and $s$ be integers such that $q$, $s>2$, $d=2$, $\alpha \in (1,2)$, $\beta$ satisfy $0\leq \beta < \alpha$, and $\gamma \in(1, \beta]$. Suppose $u \in C^{\infty}(\mathbb{R}^d \times[0, \infty))$ is divergence free, 
    and $\rho$ solves \eqref{eq-4.1-k} with $\rho_0 \geq 0 \in K_{s, \beta}$. Then, for all $\tau>0$, we have  
    \begin{equation} 
        \label{result-0}
        \|\rho(\cdot, \tau)\|_{L^1} < \max \left\{ \psi^{\frac{\gamma}{2+\gamma}} w_0^{\frac{2}{2+\gamma}}, \; \psi  \right\},
    \end{equation}
    where 
    \begin{equation}
        \psi(\chi,u,\tau)=\frac{C_1 + \|u\|_{L^{\infty}}^{\gamma}}{\chi  C_2} \left(\frac{\Theta(\tau)}{C_1 + \|u\|_{L^{\infty}}^{\gamma}} +1 \right), \quad \quad 
        \Theta(\tau)=\begin{cases}
            \tau^{-1} & \text{ if } u=0 \\
            2\left(1-e^{- 2 \tau}\right)^{-1} & \text{ if } u \neq 0, 
        \end{cases}
    \end{equation}
   $w_0=w(0)$, and $C_1$, $C_2$ are functions of $\gamma$.   
\end{theorem}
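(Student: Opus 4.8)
The plan is to run a virial-type contradiction argument on the weighted moment $w(t)$ from \eqref{pseudo-moment}, driven by the differential inequality of \Cref{Parte-1-Proof}. First I would specialize \eqref{eq-4.5-k-1}--\eqref{eq-4.5-k-2} to the choice $\mu=0$, which reads
\[
    \frac{\mathrm{d}}{\mathrm{d}t}w(t)\le 2\delta_{(u)}w(t)+m(t)\Bigl(A-\chi C_2\,m(t)^{2/\gamma}\bigl(m(t)+2w(t)\bigr)^{-(2-\gamma)/\gamma}\Bigr),\qquad A:=C_1+\|u\|_{L^\infty}^{\gamma},
\]
with $C_2$ in the statement taken to be a suitable $\gamma$-dependent multiple of the constant of \Cref{Parte-1-Proof}, so that $\psi=(A+\Theta(\tau))/(\chi C_2)$. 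Then I argue by contradiction: fix $\tau>0$ and assume $m(\tau)=:Y\ge\max\{\psi^{\gamma/(2+\gamma)}w_0^{2/(2+\gamma)},\psi\}$; note this forces $Y\ge\psi$ always, and forces $w_0>\psi$ whenever $Y<w_0$. Since $m$ is non-increasing (\Cref{Proposition-monotone-decreasing-behavior}), $m(t)\ge Y$ on $[0,\tau]$; and since $\rho(\cdot,t)\ge 0$ is smooth and decaying (\Cref{Theorem-nonnegative-solution}, \Cref{Theorem-4.1-k}) while $\varphi>0$ off the origin, $w(t)\ge 0$ throughout, with $w(t)=0$ only if $\rho(\cdot,t)\equiv 0$, i.e.\ $m(t)=0$. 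Hence it suffices to show the differential inequality forces $w(\tau)\le 0$.

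The core step is a lower bound on the chemotactic term that sees only $Y$ and $w_0$ — in particular not $\|\rho_0\|_{L^1}$ or $\|\rho\|_{L^\infty}$. From $m+2w\le 2\max(m,2w)$ one gets $m^{2/\gamma}(m+2w)^{-(2-\gamma)/\gamma}\ge c(\gamma)\min\bigl(m,\,m^{2/\gamma}w^{-(2-\gamma)/\gamma}\bigr)$ with $c(\gamma)>0$; both arguments of the minimum increase in $m$ and the second decreases in $w$, so on the set $\{0\le w\le w_0\}$, using $m\ge Y$ and $w\le w_0$, this is $\ge c(\gamma)\,\Xi$ where $\Xi:=\min\bigl(Y,\,Y^{2/\gamma}w_0^{-(2-\gamma)/\gamma}\bigr)$. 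Pulling one factor of $m$ out of the bracket and using $m\ge Y$ once more (valid once the bracket is negative, which the estimates below guarantee) gives $w'\le 2\delta_{(u)}w-K$ on $\{0\le w\le w_0\}$, with $K:=Y(\chi C_2\Xi-A)$. Comparing $w$ with the solution $v$ of $v'=2\delta_{(u)}v-K$, $v(0)=w_0$, one computes that $v(\tau)\le 0$ precisely when $K\ge w_0\Theta(\tau)$: indeed $v(\tau)=w_0-K\tau$ when $u=0$, and $v(\tau)=\tfrac{K}{2}-\bigl(\tfrac{K}{2}-w_0\bigr)e^{2\tau}$ when $u\neq 0$, and in each case the sign condition reduces to $K\ge w_0\Theta(\tau)$ for the $\Theta$ in the statement. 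Moreover $K\ge w_0\Theta(\tau)$ forces $K>2\delta_{(u)}w_0$, hence $w'(0)<0$, so a standard bootstrap keeps $w$ in $[0,w_0]$ on all of $[0,\tau]$, and then $w(\tau)\le v(\tau)\le 0$ — forcing $w(\tau)=0$ and the contradiction $m(\tau)=0<Y$.

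It remains to verify $K\ge w_0\Theta(\tau)$ (which also delivers $\chi C_2\Xi>A$) from the hypothesis on $Y$, in two regimes. If $Y\ge w_0$ then $\Xi=Y$, and $Y\ge\psi$ yields $\chi C_2Y-A\ge\Theta(\tau)$, so $K\ge Y\Theta(\tau)\ge w_0\Theta(\tau)$. If $Y<w_0$ then $w_0>\psi$ and the binding bound is $Y\ge\psi^{\gamma/(2+\gamma)}w_0^{2/(2+\gamma)}$, i.e.\ (raising to the power $(2+\gamma)/\gamma$) $Y^{1+2/\gamma}\ge\psi\,w_0^{2/\gamma}$; here $\Xi=Y^{2/\gamma}w_0^{-(2-\gamma)/\gamma}$, so $\chi C_2\,Y^{1+2/\gamma}w_0^{-(2-\gamma)/\gamma}\ge\chi C_2\psi\,w_0=(A+\Theta(\tau))w_0$, and therefore $K=\chi C_2\,Y^{1+2/\gamma}w_0^{-(2-\gamma)/\gamma}-AY\ge(A+\Theta(\tau))w_0-AY\ge\Theta(\tau)w_0$ since $Y\le w_0$. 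This closes the argument.

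I expect the main obstacle to be the core step: getting a lower bound for $m^{2/\gamma}(m+2w)^{-(2-\gamma)/\gamma}$ that is uniform and depends only on $Y$ and $w_0$ (so that $\psi$ genuinely does not involve the initial mass or the $L^\infty$ norm), which dictates the device of factoring a single power of $m$ out of $f_{(u)}$ before invoking $m(t)\ge Y$, and the accompanying bookkeeping of the two regimes $Y\ge w_0$ and $Y<w_0$ so that both the $\max$ in \eqref{result-0} and the time constant $\Theta(\tau)$ come out exactly as stated.
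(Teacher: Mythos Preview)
Your proposal is correct and follows the same virial strategy as the paper: both start from the differential inequality of \Cref{Parte-1-Proof} with $\mu=0$, both exploit that $w(t)$ cannot vanish for a globally regular nonnegative solution, both split according to whether $m(\tau)\gtrless w_0$, and both absorb the factor $3^{-(2-\gamma)/\gamma}$ into $C_2$ exactly as in \Cref{Value-C-2}.

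The execution differs in two respects worth noting. First, the paper freezes the full drift $f_{(u)}(m,w)$ and uses the sign analysis of $\partial_m f_{(u)}$ and $\partial_w f_{(u)}$ to replace $f_{(u)}(m(t),w(t))$ by $f_{(u)}(m(\tau),w_0)$ before integrating; you instead bound only the chemotactic piece $m^{2/\gamma}(m+2w)^{-(2-\gamma)/\gamma}$ from below by the constant $\Xi$ and reduce to the linear comparison ODE $v'=2\delta_{(u)}v-K$, which is slightly more elementary since it bypasses the monotonicity computation. Second, your contradiction framing plus bootstrap keeps $w\le w_0$ on all of $[0,\tau]$ directly, whereas the paper first produces a small $\tau\in U_1\cap U_2$ by continuity and must then (implicitly) propagate the conclusion to arbitrary $\tau$ via the alternative that $f_{(u)}(m(\tau),w_0)\ge 0$ already forces the bound on $m(\tau)$. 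Your packaging handles the ``for all $\tau$'' quantifier and the two cases $u=0$, $u\neq 0$ more uniformly; the paper's route makes the role of $f_{(u)}$ as a single monotone object more visible. The content is the same.
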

\begin{remark} \label{Value-C-2}
    Here $C_2$ is obtained through the product of  $ 3^{-\frac{2-\gamma}{\gamma}}$ and the constant $C_2$ featured in \eqref{eq-4.5-k-2}.
\end{remark}
\begin{remark} \label{remark-Theorem-4.2-k}
    Notice that when $u=0$, if $w_0<\frac{C_1}{\chi  C_2} \left(\frac{1}{C_1\tau} +1 \right)$, the level $\|\rho(\cdot, \tau)\|_{L^1} \sim \chi^{-1}$ will be attained in at most $\tau \sim 1$.
    Otherwise, if this condition is not met, the level $\|\rho(\cdot, \tau)\|_{L^1}\sim \chi^{-1}$ will be reached in at most $\tau \sim \chi^{\frac{2}{\gamma}}$, 
    while the level $\sim \chi^{\frac{\gamma}{2+\gamma}}$ $\left( 1/3<\gamma/(2+\gamma)<1/2\right)$  in at most $\tau \sim 1$.
    Moreover, for $u\neq 0$, if $w_0<\frac{C_1}{\chi  C_2} \left(\frac{1}{C_1\tau} +1 \right)$, on the time scale $\tau \sim 1$ the level $\|\rho(\cdot, \tau)\|_{L^1}$ is $\sim \chi^{-1}$, and otherwise,  the level $\|\rho(\cdot, \tau)\|_{L^1}$ is $\sim \chi^{\frac{\gamma}{2+\gamma}}$.
\end{remark}
\begin{proof}
We begin by observing that, since $\rho_0 \in K_{s, \beta}$ and $\gamma\leq \beta$, the integral $w_0=\int_{\mathbb{R}^2} \varphi(x) \rho_0(x) \mathrm{~d} x$ is finite.
Moreover, from the ordinary differential inequality provided by \eqref{eq-4.5-k-1}, we obtain 
\begin{equation}
    \label{eq-4.6-k}
    \frac{\mathrm{d}}{\mathrm{d} t} \left(e^{- 2 \delta_{(u)} \chi^{-\mu} t} w(t) \right) \leq e^{- 2 \delta_{(u)} \chi^{-\mu} t} f_{(u)}(m,w),
\end{equation}
where $f_{(u)}$ and $\delta_{(u)}$ are defined as in  \Cref{Parte-1-Proof}. 
Proceeding, we split the proof into two cases.
\vspace{0.55cm}


$\bullet$ \underline{\textbf{Case 1 $(u=0)$:}} 
Setting $u=0$ in \eqref{eq-4.6-k} yields the differential inequality
\begin{equation}
    \label{eq-1-fractional}
    \frac{\mathrm{d}}{\mathrm{d} t} w(t) \leq f(m,w),
\end{equation}
where $f(m,w)\equiv f_{(u=0)}(m,w)$. 

Now, suppose the initial condition $(m_0, w_0)$ satisfies $f\left(m_0,w_0\right)<0$. Then,  by the continuity of $f$ in $m$ and the continuity of $m(t)$, there exists an open neighborhood $U_1$ of $t=0$ such that, for all $t  \in U_1$, 
\begin{equation*}
    \left|f\left(m(t),w_0\right)-f\left(m_0,w_0\right)\right|<-\frac{1}{2}f\left(m_0,w_0\right),
\end{equation*}
implying that $f\left(m(t),w_0\right)< f\left(m_0,w_0\right)/2$ in $U_1$. 
Furthermore, by the continuity of $f$ in both $m$ and $w$, and of $(m,w)$ with respect to $t$, there exists an open neighborhood $U_2$ of $t=0$ such that $f\left(m(t),w(t)\right)<0$ for all $t\in U_2$.  
Choosing $\tau\in U_1 \cap U_2$, we ensure $f(m(t), w(t)) <0$ for all $t \in [0, \tau]$ and $f(m(\tau), w_0)<0$.  

Since $f(m(t), w(t)) < 0$ on $[0, \tau]$, from \eqref{eq-1-fractional}, $w(t)$ decreases and, hence, $w_0 \geq w(t)$ on this interval. Moreover, as $m(t)$ is monotonically decreasing, $m(\tau) \leq m(t)$ for every $t \in [0, \tau]$. 
Consequently, $f(m(t), w(t)) \leq f(m(\tau), w_0)$ for all  $t \in [0, \tau]$. 

To clarify this assertion, consider the partial derivatives of $f(m, w)$: 
\begin{align}
    &\partial_m  f(m,w)=\frac{f(m,w)}{m} -C_2{\chi}m^\frac{2}{{\gamma}} \left(m+2w\right)^{-\frac{2}{\gamma}}\left(m+\frac{4w}{\gamma}\right), \label{derivative-f-m} \\[0.2cm]
    &\partial_w  f(m,w)=2\gamma^{-1}C_2{\chi}m^{\frac{2}{{\gamma}}+1}\left(2-{\gamma}\right)\left(m+2w\right)^{-\frac{2}{{\gamma}}}. \label{derivative-f-w}
\end{align}
Note that  $\partial_w  f(m,w)>0$ and, as $f(m,w)<0$ holds for all $t \in [0, \tau]$,  $\partial_m  f(m,w)<0$ over this interval. The signs of these derivatives reveal that $f$ decreases as  $w$ decreases, and, over the interval $[0, \tau]$, $f$ increases as $m$ decreases. Thus, the inequalities $w_0 \geq w(t)$ and $m(\tau) \leq m(t)$ for every $t \in [0, \tau]$ imply $f(m(t), w(t)) \leq f(m(\tau), w_0)$.  
In a more direct argument, we can compare $-f(m(t), w(t))$ and $-f(m(\tau), w_0)$ for $t \in [0, \tau]$ using the inequalities $-f(m(t), w(t))>0$, $w_0 \geq w(t)$ and $m(\tau) \leq m(t)$ in this interval:
\begin{equation*}
    \begin{split}
        -f\left(m(t),w(t)\right) 
        & = m(t) \left(\chi C_2 m(t) \left(1+\frac{2w(t)}{m(t)}\right)^{-\frac{2-\gamma}{\gamma}}-C_1\right) 
            \\
        & \geq m(\tau) \left(\chi C_2m(\tau)\left(1+\frac{2w_0}{m(\tau)}\right)^{-\frac{2-\gamma}{\gamma}}-C_1\right)= -f\left(m(\tau),w_0\right).  
    \end{split}
\end{equation*} 

Next, upon integrating equation \eqref{eq-1-fractional}, we derive the following estimate:
\begin{equation*}
     w_0-w(\tau) \geq -\int_0^{\tau} f\left(m(\tau),w_0\right) \mathrm{d} t= -\tau f\left(m(\tau),w_0\right).
\end{equation*}
Thus, as $w(\tau)$ cannot vanish, we must have 
$-\tau f\left(m(\tau),w_0\right) < w_0$, that is, 
\begin{equation}
    \label{eq-Theorem-4.2-01}
    \tau m(\tau) \left[\chi  C_2 m(\tau)^{\frac{2}{\gamma}} \left(m(\tau)+2w_0\right)^{-\frac{2-\gamma}{\gamma}}- C_1\right] < w_0,
\end{equation}
which can be written as 
\begin{equation*}
    m(\tau)^{1+\frac{2}{\gamma}} < \frac{C_1}{\chi  C_2}\left(\frac{w_0}{ C_1\tau}  + m(\tau)\right) \left(m(\tau)+2w_0\right)^{\frac{2-\gamma}{\gamma}}.
\end{equation*}
From this, we obtain, if $m(\tau)<w_0$, 
\begin{equation*}
    \begin{split}
        m(\tau) &< \left[\frac{C_1}{\chi  C_2}\left(\frac{1}{ C_1\tau}  + 1\right)\right]^{\frac{\gamma}{2+\gamma}}  w_0^{\frac{\gamma}{2+\gamma}}  \left(3w_0\right)^{\frac{2-\gamma}{2+\gamma}}
        =\left[\frac{C_1}{\chi  C_2} \left(\frac{1}{C_1\tau} +1 \right) \right]^{\frac{\gamma}{2+\gamma}} w_0^{\frac{2}{2+\gamma}},
    \end{split} 
\end{equation*}   
and, if $m(\tau)\geq w_0$,
\begin{equation*}
    \begin{split}
        m(\tau)^{1+\frac{2}{\gamma}} &< \left[\frac{C_1}{\chi  C_2} \left(\frac{1}{C_1\tau} +1 \right) \right] m(\tau) \left(3m(\tau)\right)^{\frac{2-\gamma}{\gamma}}=\left[\frac{C_1}{\chi  C_2} \left(\frac{1}{C_1\tau} +1 \right) \right] m(\tau)^{\frac{2}{\gamma}}, 
    \end{split}
\end{equation*}   
where we 
incorporated $3^{-\frac{2-\gamma}{\gamma}}$ into $C_2$. Therefore,
\begin{equation*}
    m(\tau) < \frac{C_1}{\chi  C_2} \left(\frac{1}{C_1\tau} +1 \right).
\end{equation*}

Note that, if the initial assumption, $f(m_0,w_0)< 0$, is not met, the result still follows from $m(t) \leq m_0$ for all $t>0$. Indeed, $f(m_0,w_0)\geq 0$ is equivalent to $ m_0^{1+\frac{2}{\gamma}} \leq \frac{C_1}{\chi  C_2}m_0 \left(m_0+2w_0\right)^{\frac{2-\gamma}{\gamma}}$. Then, as seen in the previous calculations, incorporating $3^{-\frac{2-\gamma}{\gamma}}$ into $C_2$, if $m_0 < w_0$, we deduce that $ m_0< \left(\frac{C_1}{\chi  C_2} \right)^{\frac{\gamma}{2+\gamma}} w_0^{\frac{2}{2+\gamma}}$; otherwise, 
we obtain $m_0 \leq \frac{C_1}{\chi  C_2}$. 
\vspace{0.25cm} 

$\bullet$ \underline{\textbf{Case 2 $(u\neq 0)$:}} 
Assuming the initial condition $(m_0,w_0)$ satisfies $f_{(u)}\left(m_0,w_0\right)<0$, as proved in \textbf{case 1}, there exists $\tau>0$  such that $f_{(u)}\left(m(t),w(t)\right)<0$ for all $t \in[0, \tau]$, and $f_{(u)}\left(m(\tau),w_0\right)<0$.
Additionally, $f_{(u)}(m(t), w(t)) \leq f_{(u)}(m(\tau), w_0)$ in this interval, since the partial derivatives of $f_{(u)}$ with respect to $m$ and $w$ are expressed, respectively, by equations \eqref{derivative-f-m} and \eqref{derivative-f-w}, with $C_1$ replaced by $C_1+\chi^{\mu(\gamma-1)} \|u\|_{L^{\infty}}^{\gamma}$, so that the validity of this statement is justified as previously outlined.

Subsequently, as $\delta_{(u)}=1$ in
\eqref{eq-4.6-k}, the time integral of the right-hand side of \eqref{eq-4.6-k} over the interval $[0, \tau]$ can be upper-bounded by
\begin{multline}
    \label{eq-4.7-k}
    \int_0^\tau e^{- 2 \chi^{-\mu}t} m(\tau) \left[C_1 + \chi^{\mu(\gamma-1)} \|u\|_{L^{\infty}}^{\gamma}-\chi  C_2 m(\tau)^{\frac{2}{\gamma}} \left(m(\tau)+2w_0\right)^{-\frac{2-\gamma}{\gamma}}\right] \mathrm{~d} t=\\
    \frac{\left(1-e^{- 2 \chi^{-\mu}\tau}\right)}{2} \chi^{\mu} m(\tau) \left[C_1 + \chi^{\mu(\gamma-1)} \|u\|_{L^{\infty}}^{\gamma}-\chi  C_2 m(\tau)^{\frac{2}{\gamma}} \left(m(\tau)+2w_0\right)^{-\frac{2-\gamma}{\gamma}}\right],
\end{multline}
and, to avoid the contradiction that $w(\tau)$ vanishes, we imposed that 
\begin{equation}
    \label{eq-4.8-k}
    \left(1-e^{- 2 \chi^{-\mu}\tau}\right) \chi^{\mu} m(\tau) \left[\chi  C_2 m(\tau)^{\frac{2}{\gamma}} \left(m(\tau)+2w_0\right)^{-\frac{2-\gamma}{\gamma}}-\left(C_1 + \chi^{\mu(\gamma-1)} \|u\|_{L^{\infty}}^{\gamma}\right)\right] <  2 w_0.
\end{equation}
Proceeding analogously to the proof in case 1, we can express condition \eqref{eq-4.8-k} as
\begin{equation*}
     m(\tau)^{1+\frac{2}{\gamma}} < \left(\frac{C_1 + \chi^{\mu(\gamma-1)} \|u\|_{L^{\infty}}^{\gamma}}{\chi  C_2}\right)\left(\frac{ 2 w_0}{\left(C_1 + \chi^{\mu(\gamma-1)} \|u\|_{L^{\infty}}^{\gamma}\right) \left(1-e^{- 2 \chi^{-\mu}\tau}\right) \chi^{\mu} }  + m(\tau)\right) \left(m(\tau)+2w_0\right)^{\frac{2-\gamma}{\gamma}}.
\end{equation*}
Then, incorporating, as before, $3^{-\frac{2-\gamma}{\gamma}}$ into $C_2$, we obtain, for $m(\tau)<w_0$,
\begin{equation}
    \label{eq-4.7-k-2-1}
    m(\tau) <\left[\left(\frac{C_1 + \chi^{\mu(\gamma-1)} \|u\|_{L^{\infty}}^{\gamma}}{\chi  C_2}\right)\left(\frac{ 2 }{\left(C_1 + \chi^{\mu(\gamma-1)} \|u\|_{L^{\infty}}^{\gamma}\right) \left(1-e^{- 2 \chi^{-\mu}\tau}\right) \chi^{\mu} }  + 1\right) \right]^{\frac{\gamma}{2+\gamma}} w_0^{\frac{2}{2+\gamma}},
\end{equation}   
and for $m(\tau)\geq w_0$,
\begin{equation}
    \label{eq-4.7-k-2-2}
    m(\tau) < \left(\frac{C_1 + \chi^{\mu(\gamma-1)} \|u\|_{L^{\infty}}^{\gamma}}{\chi  C_2}\right)\left(\frac{ 2 }{\left(C_1 + \chi^{\mu(\gamma-1)} \|u\|_{L^{\infty}}^{\gamma}\right) \left(1-e^{- 2 \chi^{-\mu}\tau}\right) \chi^{\mu} }  + 1\right).
\end{equation}

We can see that the optimal choice for the parameter $\mu$ (invariant to any quantity of the problem) to minimize $m(\tau)$ is zero.  
%
Finally, as in \textbf{case 1}, if the initial assumption is not met, the result follows from $m(t) \leq m_0$ for all $t>0$. 
\end{proof}

\section*{Acknowledgments}
CA was supported in part by the Coordenação de Aperfeiçoamento de Pessoal de Nível Superior - Brasil (CAPES) - Finance Code 001.
AK was partially supported by the NSF-DMS grant 2306726.


\printbibliography

\end{document}